\documentclass[11pt]{amsart}
\usepackage{amsmath, amssymb, amscd, bbm, mathrsfs, url, pinlabel, hyperref, verbatim, xargs}
\usepackage[margin=1.25in,centering,letterpaper,dvips]{geometry}
\usepackage{color,dcpic,latexsym,graphicx,epstopdf,comment}
\usepackage[all]{xy}
\usepackage{enumitem}
\usepackage{tikz}
\usetikzlibrary{calc}

\newtheorem {theorem}{Theorem}
\newtheorem {lemma}[theorem]{Lemma}
\newtheorem {proposition}[theorem]{Proposition}
\newtheorem {corollary}[theorem]{Corollary}

\newtheorem {definition}[theorem]{Definition}

\theoremstyle{remark}

\newtheorem {remark}[theorem]{Remark}

\numberwithin{equation}{section}
\numberwithin{theorem}{section}

\newlist{pcases}{enumerate}{1}
\setlist[pcases]{
  label=\bf{Case~\arabic*:}\protect\thiscase.~,
  ref=\arabic*,
  align=left,
  labelsep=0pt,
  leftmargin=0pt,
  labelwidth=0pt,
  parsep=0pt
}
\newcommand{\case}[1][]{%
  \if\relax\detokenize{#1}\relax
    \def\thiscase{}%
  \else
    \def\thiscase{~#1}%
  \fi
  \item
}

\newcommand{\Z}{\mathbb{Z}}
\newcommand{\R}{\mathbb{R}}

\newcommand{\C}{\mathbb{C}}
\newcommand{\F}{\mathbb{F}}

\newcommand{\img}{\operatorname{Im}}

\newcommand{\cB}{\mathcal{B}}
\newcommand{\cC}{\mathcal{C}}

\newcommand{\cT}{\mathcal{T}}

\newcommand{\ssm}{\smallsetminus}

\newcommand{\twosmallmatrix}[4]{{\left(\begin{smallmatrix} #1 & #2 \\ #3 & #4 \end{smallmatrix}\right)}}
\newcommand{\pt}{\mathrm{pt}}

\DeclareMathOperator{\rank}{rank}

\DeclareMathOperator{\sign}{sign}
\DeclareMathOperator{\cross}{cr}
\DeclareMathOperator{\poly}{poly}

\newcommand{\maxnorm}[1]{\left\lVert #1 \right\rVert_\infty}

\newcommand{\NP}{{\sf NP}}
\newcommand{\PP}{{\sf P}}
\newcommand{\coNP}{{\sf co\text{-}NP}}
\newcommand{\unknotrecognition}{{\sf UNKNOT}}
\newcommand{\torusknot}{{\sf TORUS\text{-}KNOT}}
\newcommand{\satelliteknot}{{\sf SATELLITE\text{-}KNOT}}

\tikzset{every picture/.style=thick}
\tikzset{baseline=-\the\dimexpr\fontdimen22\textfont2\relax}

\title{On the complexity of torus knot recognition}
\date{}

\author{John A. Baldwin}
\address{Department of Mathematics \\ Boston College}
\email{john.baldwin@bc.edu}

\author{Steven Sivek}
\address{Department of Mathematics \\ Imperial College London}
\email{s.sivek@imperial.ac.uk}

\begin{document}

\begin{abstract}
We show that the problem of recognizing that a knot diagram represents a specific torus knot, or any torus knot at all, is in the complexity class $\NP \cap \coNP$, assuming the generalized Riemann hypothesis.  We also show that satellite knot detection is in $\NP{}$ under the same assumption, and that cabled knot detection and composite knot detection are unconditionally in $\NP$.  Our algorithms are based on recent work of Kuperberg and of Lackenby on detecting knottedness.
\end{abstract}

\maketitle

\section{Introduction}

According to Thurston \cite{thurston}, nontrivial knots in $S^3$ fall into one of three categories: torus knots, satellite knots, and hyperbolic knots.  Our goal in this paper is to study the computational complexity of the recognition problem for each of these categories, and for torus knots in particular.

Algorithms for unknot recognition have been studied for some time.  This problem was first shown to be decidable by Haken \cite{haken}, using an algorithm based on the theory of normal surfaces.  Hass, Lagarias and Pippenger \cite{hass-lagarias-pippenger} proved that the unknot recognition problem is in \NP{}: this means that given a knot diagram, if the knot is unknotted then there is a certificate which can be used to prove this in polynomial time.  Other proofs come from work of Agol-Hass-Thurston \cite{agol-hass-thurston}, who showed that determining if a knot in any 3-manifold has genus at most $g$ is \NP{}-complete; and from Ivanov \cite{ivanov}, as a corollary of his result that recognizing $S^1 \times D^2$ (among other 3-manifolds) is in \NP{}.

More recently, unknot recognition has also been shown to be in \coNP{}, meaning that given a diagram which does \emph{not} represent the unknot, there is a certificate which can be used to verify its knottedness in polynomial time with respect to the crossing number.  A proof of this was first announced by Agol in 2002 but not published, and later Kuperberg \cite{kuperberg} proved it assuming the generalized Riemann hypothesis (GRH).  Lackenby \cite{lackenby-knottedness} gave the first unconditional proof by showing that the knot genus problem in $S^3$, and more generally the problem of determining the Thurston norm of a homology class in many 3-manifolds, is in \NP{}.  Thus
\[ \unknotrecognition \in \NP \cap \coNP. \]
It is still unknown whether unknot recognition is in \PP{}.

Little else seems to be known about the complexity of other knot recognition problems.  There are algorithms which can decide whether two knots are isotopic, due to work of Haken \cite{haken-homeomorphism}, Hemion \cite{hemion}, and Matveev \cite{matveev}; this also follows from an explicit (though enormous) upper bound of Coward and Lackenby \cite{coward-lackenby} on the number of Reidemeister moves needed to convert one knot diagram to the other.  For the unknot, this bound can be improved to a polynomial in the crossing number \cite{lackenby-polynomial}, giving yet another proof that $\unknotrecognition \in \NP$.  We prove results about the complexity of recognizing any torus knot, as well as detecting whether a knot is a torus knot, a satellite, hyperbolic, cabled, or a connected sum, as follows.

\begin{theorem} \label{thm:main-torus}
The torus knot recognition problem is in $\NP \cap \coNP$, assuming GRH.  Moreover, for any fixed torus knot $T_{r,s}$, the $T_{r,s}$ recognition problem is also in $\NP \cap \coNP$, assuming GRH.
\end{theorem}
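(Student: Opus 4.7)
The plan is to establish NP and coNP membership separately, reducing in both directions to results appearing later in this paper or in the work of Hass--Lagarias--Pippenger (HLP) and of Kuperberg.

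For the NP direction, the key observation is that a knot $K$ is a torus knot if and only if it is a cable of the unknot: $T_{r,s}$ is realized as the $(r,s)$-cable of the unknot for coprime $r,s$ with $|r|,|s|\geq 2$. Since cabled knot detection is shown (unconditionally) to be in \NP{} later in this paper, an NP certificate that $K$ is a torus knot consists of (i) a cabling certificate---an essential torus $T \subset S^3 \setminus N(K)$, together with cabling data, realizing $K$ as the $(r',s')$-cable of some companion knot $J$---and (ii) an HLP certificate that $J$ is the unknot. For the specific $T_{r,s}$ recognition problem, the verifier additionally checks that the pair $(r',s')$ returned by the cabling certificate agrees with the target $(r,s)$ up to the symmetries $T_{r,s} \cong T_{s,r} \cong T_{-r,-s}$.

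For the coNP direction, the plan is to invoke Thurston's trichotomy: every nontrivial knot in $S^3$ is a torus knot, a satellite knot, or a hyperbolic knot, and these classes are mutually exclusive (torus knot exteriors are Seifert fibered with no incompressible tori). Therefore, a knot $K$ that is not a torus knot is either the unknot, hyperbolic, or a satellite, and each of these admits an NP certificate: the HLP certificate for the unknot, Kuperberg's GRH-conditional certificate (via a discrete faithful representation $\pi_1(S^3\setminus K) \to SL_2(\C)$) for a hyperbolic knot, and this paper's own GRH-conditional NP certificate for satellite knots. In the specific $T_{r,s}$ version there is one extra subcase: $K$ could be another torus knot $T_{r',s'}$ with $(r',s')$ inequivalent to $(r,s)$, and this is handled by producing the NP certificate from the previous paragraph and then checking that the parameters differ.

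The main obstacle I anticipate lies in the NP direction: one must ensure that the cabling certificate produces a diagram of the companion $J$ of size polynomial in the crossing number $n$ of the input, so that HLP can certify $J$ as the unknot. This requires controlling how the essential torus, encoded as a normal surface, determines a usable diagram for $J$. A secondary obstacle is carefully tracking the various GRH-dependencies in the three branches of the coNP case analysis. Parameter size is only a mild concern: since $T_{r,s}$ has crossing number $\min(|r|(|s|-1),|s|(|r|-1))$, any torus-knot parameters appearing in the certificate are polynomially bounded in $n$.
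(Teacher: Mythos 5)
Your high-level skeleton (NP via a cabling-type decomposition, coNP via Thurston's trichotomy) matches the paper's, but both directions have genuine gaps. In the NP direction, the detour through a diagram of the companion $J$ is exactly the step that fails: the cabling annulus is handed to the verifier as a normal surface in a triangulation of $E_K$, and there is no polynomial-time procedure in the paper (or that you supply) converting the resulting triangulated companion exterior into a knot \emph{diagram} of polynomial size on which Hass--Lagarias--Pippenger could be run. You also cannot invoke the paper's cabled-knot theorem as a black box, since its certificate requires the companion to have \emph{positive} Thurston norm, i.e.\ it deliberately excludes cables of the unknot. The paper's fix is to never leave the world of triangulations: cut along the normal annulus, triangulate both pieces via Lackenby's methods, and certify that \emph{both} pieces are solid tori using Ivanov's $S^1\times D^2$ recognition (which is in \NP{} for triangulations); one must also certify essentiality of the annulus, which is done by checking $\Delta_K\neq 1$ and that $\partial A$ is homologically nontrivial in $\partial E_K$. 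Note this makes the NP half unconditional; no GRH is needed there.

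In the coNP direction, the hyperbolic branch is where the real content lies, and your proposal does not supply it. Kuperberg's certificate (a nonabelian $SL_2(\F_p)$ representation) certifies \emph{knottedness}, not hyperbolicity, and certifying that a knot \emph{is} hyperbolic is not known to be in \NP{}; moreover ``discreteness and faithfulness'' are not properties one can verify mod $p$. What is needed is a certificate that a hyperbolic knot is \emph{not a torus knot}, and the paper's key idea is the ``uncentered certificate'': for a torus knot $T_{r,s}$ the peripheral element $\mu^{rs}\lambda$ is central in the knot group, whereas a hyperbolic knot group has trivial center, so a faithful $SL_2(\C)$ representation sends $\mu^{rs}\lambda$ to a non-central element; GRH (via Koiran and effective Chebotarev) then yields an $SL_2(\F_p)$ representation with the same non-centrality, verifiable in polynomial time. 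One also needs the observation that the Alexander polynomial and signature (computable in polynomial time) pin down at most one candidate pair $(r,s)$ with $|rs|<3n$, so that a single such certificate suffices to rule out all torus knots. Without the centrality idea, your trichotomy leaves the hyperbolic case unhandled.
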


\begin{remark}
The claim that $\torusknot \in \NP$ is proved unconditionally in Theorem~\ref{thm:torus-knot-np}, so only the membership in \coNP{} requires GRH.  The same is true for the $T_{r,s}$ recognition problem.
\end{remark}

\begin{theorem} \label{thm:main-satellite}
The satellite knot recognition problem is in \NP{}, assuming GRH.
\end{theorem}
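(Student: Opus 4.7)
The plan is to give a direct \NP{} certificate for $K$ being a satellite by exhibiting an essential, non-boundary-parallel torus in the exterior $M = S^3 \setminus \nu(K)$ and then certifying its essentialness using the $SL_2$-representation technology that underlies Kuperberg's proof that $\unknotrecognition \in \coNP$ under GRH.

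First, from a diagram of $K$ with $n$ crossings I would construct a triangulation $\tau$ of $M$ with $O(n)$ tetrahedra by standard methods. The prover then guesses a normal surface $T$ in $\tau$, given by its tuple of normal coordinates, together with a verification that these coordinates satisfy the matching equations. By the classical bounds of Hass--Lagarias--Pippenger and Jaco--Tollefson on fundamental normal surfaces, if $M$ contains any essential torus then it contains one whose coordinates have at most $2^{\poly(n)}$ entries, so polynomially many bits suffice to write it down. I would then verify combinatorially that $T$ is connected, two-sided, and satisfies $\chi(T)=0$, so $T$ is a torus (or a Klein bottle, from which an orientable double cover that is a torus can be extracted).

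Next, and this is the main content, the certificate must establish that $T$ is incompressible in $M$ and not parallel to $\partial M$. Following Kuperberg, the prover supplies a prime $q$ of polynomially many bits together with a representation $\rho:\pi_1(M) \to SL_2(\F_q)$, specified on the generators naturally associated to $\tau$ and verified on the relators. Incompressibility of $T$ is certified by exhibiting a commuting pair $\alpha,\beta$ generating $\pi_1(T)$ whose images generate a subgroup of $SL_2(\F_q)$ of order large enough to preclude any nontrivial kernel, witnessing that $\pi_1(T) \to \pi_1(M)$ is injective. Non-boundary-parallelism is certified by exhibiting, for each component $M_i$ of $M \setminus T$, an element of $\pi_1(M_i)$ whose image under $\rho$ lies outside $\rho(\pi_1(T))$, ensuring that neither component is a product $T^2 \times I$. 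The existence of such a $\rho$ with $q$ of polynomial bit-length rests on Kuperberg's application of the effective Chebotarev density theorem under GRH to the number fields attached to the $SL_2(\C)$-character variety of $\pi_1(M)$.

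The main obstacle will be to ensure that a single representation $\rho$ with all of the required faithfulness properties — on $\pi_1(T)$ and on each side of $T$ — actually exists and is detectable with $q$ of polynomial size. Kuperberg's original argument needs only one non-abelian representation, whereas here $\rho$ must simultaneously \emph{see} the JSJ torus as genuinely separating. Verifying this amounts to checking that the hyperbolic or Seifert-fibered JSJ pieces of $M$ admit Galois-conjugate faithful representations into $SL_2(\C)$ that reduce suitably modulo a small prime, and controlling the arithmetic of the resulting trace fields via GRH is the technical heart of the argument.
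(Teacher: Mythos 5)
Your overall architecture (guess a normal essential torus of bounded weight, certify incompressibility, certify non-boundary-parallelism via an $SL_2(\F_q)$ representation under GRH) matches the paper's in outline, but both of your certification steps have genuine gaps.

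First, incompressibility. You propose to certify that $\pi_1(T) \to \pi_1(M)$ is injective by exhibiting a representation $\rho:\pi_1(M)\to SL_2(\F_q)$ under which generators $\alpha,\beta$ of $\pi_1(T)\cong\Z^2$ have images generating a large subgroup. No homomorphism to a finite group can witness injectivity of a map out of $\Z^2$: the restriction of $\rho$ to $\pi_1(T)$ has a finite-index, hence infinite, kernel. Worse, if $T$ compresses then by the loop theorem some \emph{primitive} element of $\Z^2$ dies in $\pi_1(M)$, and since the abelian subgroups of $SL_2(\F_q)$ are essentially cyclic, the kernel of $\rho|_{\pi_1(T)}$ always contains primitive elements, so no choice of $\rho$ can rule out a compression. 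The paper certifies incompressibility by a completely different (and unconditional) route: Lackenby's sutured manifold hierarchy certificates from his Thurston norm work. That machinery is absent from your proposal and cannot be replaced by representation-theoretic data.

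Second, non-boundary-parallelism. Your criterion --- exhibit $g\in\pi_1(M_i)$ with $\rho(g)\notin\rho(\pi_1(T))$ --- would indeed show $M_i\not\cong T^2\times I$ if such a $\rho$ existed, but you have not shown existence, and you flag this as ``the main obstacle'' without resolving it. This is the mathematical heart of the paper: Theorem~1.5 shows that the exterior of a knot $P\subset S^1\times D^2$ not isotopic to the core admits a nonabelian $SL_2(\C)$ representation, and the proof runs through Zentner's theorem on homology spheres, Gordon--Luecke, the Berge--Gabai classification of knots in solid tori with solid torus surgeries, and Kronheimer--Mrowka --- not through faithful representations of JSJ pieces. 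Note also that the paper only needs a nonabelian representation of $\pi_1(C)$ for a \emph{single} complementary component $C$ (nonabelianness alone rules out $T^2\times I$, since that manifold has abelian fundamental group); demanding one global $\rho$ on all of $\pi_1(M)$ that simultaneously behaves well on $\pi_1(T)$ and on every complementary piece is a much stronger requirement, and there is no reason such a representation of the full satellite group should exist, let alone reduce modulo a small prime.
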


\begin{corollary} \label{cor:main-hyperbolic}
The hyperbolic knot recognition problem is in \coNP{}, assuming GRH.
\end{corollary}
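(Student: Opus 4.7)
The plan is to use Thurston's trichotomy as the starting point: any nontrivial knot in $S^3$ is either a torus knot, a satellite knot, or a hyperbolic knot. Including the unknot, this means that a knot $K$ fails to be hyperbolic if and only if $K$ is the unknot, a (nontrivial) torus knot, or a satellite knot. So to put $\mathsf{HYPERBOLIC\text{-}KNOT}$ in $\coNP$, it suffices to exhibit, for any diagram $D$ representing a non-hyperbolic knot, a polynomial-size certificate (verifiable in polynomial time in the crossing number) that $D$ falls into one of those three classes.

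Concretely, given a diagram $D$ of a non-hyperbolic knot, the certificate is a pair $(t, c)$ where $t \in \{0,1,2\}$ is a tag indicating whether $D$ represents the unknot, a nontrivial torus knot, or a satellite knot, and $c$ is a certificate of the corresponding membership:
\begin{itemize}
\item If $t=0$, then $c$ is the Hass--Lagarias--Pippenger (or Lackenby) certificate showing $\unknotrecognition \in \NP$.
\item If $t=1$, then $c$ is the certificate from Theorem~\ref{thm:main-torus} showing $\torusknot \in \NP$; as noted in the remark following that theorem, this part is unconditional.
\item If $t=2$, then $c$ is the certificate from Theorem~\ref{thm:main-satellite} showing $\satelliteknot \in \NP$ under GRH.
\end{itemize}
The verifier reads $t$ and then runs the appropriate polynomial-time verification procedure on $(D,c)$. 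Conversely, if $D$ represents a non-hyperbolic knot, Thurston's trichotomy guarantees that at least one of these three certificates exists, so the corollary follows.

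The only step requiring GRH is the satellite case, inherited from Theorem~\ref{thm:main-satellite}; the torus and unknot branches are unconditional. There is no genuine obstacle here beyond assembling the three existing $\NP$ certificates into a single disjunctive one; the work is entirely in the NP results established earlier in the paper.
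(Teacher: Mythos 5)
Your proposal is correct and matches the paper's own proof: the paper likewise certifies non-hyperbolicity via Thurston's trichotomy by supplying an \NP{} certificate that the knot is an unknot, a torus knot, or a satellite knot, with GRH needed only for the satellite case. No differences of substance.
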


\begin{proof}
We can certify that a knot diagram does not represent a hyperbolic knot by providing a certificate that it is either an unknot, a torus knot, or a satellite knot, since all three recognition problems are in \NP{} (assuming GRH in the case of a satellite knot).
\end{proof}

\begin{theorem} \label{thm:main-cable-composite}
The cabled knot and composite knot recognition problems are in \NP{}.
\end{theorem}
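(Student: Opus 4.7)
The plan is to give NP certificates via essential normal annuli in the knot exterior. I would begin from the standard topological characterizations: a knot $K$ is composite if and only if its exterior $E(K) = S^3 \setminus \nu(K)$ contains an essential annulus $A$ whose two boundary components are meridians of $\partial E(K)$, and $K$ is cabled if and only if $E(K)$ contains an essential annulus $A$ with non-meridional boundary. Here \emph{essential} means incompressible, boundary-incompressible, and not boundary-parallel; it is precisely this essentiality condition that encodes the non-triviality of the summand decomposition (composite case) or of the companion (cabled case).

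Given a diagram $D$ with $n$ crossings, I would first build in polynomial time a triangulation $\tau$ of $E(K)$ with $O(n)$ tetrahedra, via the Hass-Lagarias-Pippenger construction. The body of the certificate is then the vector of normal coordinates of a candidate annulus $A$ in $\tau$. By Jaco-Tollefson normal surface theory, an essential annulus, if one exists, may be chosen as a vertex normal surface, and so has coordinates of bit-length polynomial in $n$. From these coordinates one computes the Euler characteristic, number of boundary components, and boundary slope of $A$ in polynomial time, verifying that $A$ is an annulus of the required meridional or non-meridional type.

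The main obstacle is certifying essentiality of $A$ in NP. The plan is to follow the strategy used by Lackenby for knot genus and by Ivanov for $S^1 \times D^2$ recognition: cut $\tau$ along $A$ to produce a triangulation $\tau'$ of the cut-open manifold, of polynomial size, and augment the certificate with further normal-surface data verifying two things. First, the non-existence of compressing and boundary-compressing disks, which is handled by fundamental-solution bounds in $\tau'$. Second, the non-boundary-parallelism of $A$; this is the subtlest point, since it is a priori a negative condition. In the composite case it amounts to the non-triviality of both summand tangles, and in the cabled case to the non-triviality of the companion. To certify it in NP I would exhibit, in each component of the cut-open manifold, an auxiliary essential normal surface whose topology is incompatible with that component being a solid torus (for example, a non-disk incompressible surface), keeping all complexities polynomial by Lackenby-style Thurston-norm bounds. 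Assembling these ingredients yields polynomial-size NP certificates for the composite and cabled knot recognition problems.
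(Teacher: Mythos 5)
Your overall architecture matches the paper's: triangulate $E_K$ with $O(n)$ tetrahedra, present an essential annulus $A$ as a vertex normal surface with polynomially bounded coordinate size (Jaco--Tollefson plus Hass--Lagarias--Pippenger), cut along $A$ in polynomial time, and certify the complementary pieces using Ivanov's solid torus recognition and Lackenby's Thurston norm certificates. That part is sound. However, there is a genuine gap in how you propose to certify that $A$ is incompressible and boundary-incompressible. You say the ``non-existence of compressing and boundary-compressing disks \ldots is handled by fundamental-solution bounds in $\tau'$,'' but this is a negative condition over an exponentially large set: bounds on fundamental or vertex solutions tell you that \emph{if} a compressing disk exists then a small one does, but verifying that \emph{no} small one exists requires enumerating exponentially many candidates, which is exactly the kind of obstruction that historically kept knottedness out of $\coNP$ for so long. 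As stated, this step does not yield a polynomial-time verifier. The paper avoids the issue entirely with a cheap positive check (Lemma~\ref{lem:annulus-pi1}): for an annulus in the exterior of a \emph{knotted} $K$, the map $\pi_1(A)\to\pi_1(E_K)$ is injective if and only if the components of $\partial A$ are homologically nontrivial in $\partial E_K$, which is read off directly from the normal coordinates of $\partial A$ on the one-vertex boundary triangulation. An incompressible annulus in this setting is then automatically either essential or boundary-parallel, so no separate boundary-incompressibility certificate is needed.

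Two smaller points. First, the homological criterion above presupposes that $K$ is knotted, so the certificate must establish knottedness up front; the paper does this via $\Delta_K\neq 1$ in the cabled case (cables never have trivial Alexander polynomial) and via Lackenby's knottedness certificate in the composite case (composite knots can have $\Delta_K=1$). Your proposal omits this. Second, your stated characterization ``$K$ is cabled iff $E(K)$ contains an essential annulus with non-meridional boundary'' is not quite right, since torus knots also have such annuli; you do flag the need to certify nontriviality of the companion, and the paper handles this by certifying that the non-solid-torus complementary piece has nonzero Thurston norm (for cables, it additionally rules out boundary-parallelism by comparing $\deg\Delta_{K'}$ with $\deg\Delta_K$, using $\Delta_{C_{p,q}(K')}(t)=\Delta_{T_{p,q}}(t)\Delta_{K'}(t^q)$). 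Your idea of exhibiting ``an auxiliary essential normal surface'' in each piece to rule out solid tori has the same certification problem as above --- essentiality is again a negative condition --- whereas Lackenby's Thurston norm certificates are a complete, verifiable substitute.
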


Our proofs of Theorem~\ref{thm:main-torus} and Theorem~\ref{thm:main-satellite} (which appears later as Theorem~\ref{thm:satellite-np}) combine aspects of both Kuperberg's and Lackenby's proofs that $\unknotrecognition \in \coNP$.  In order to recognize a torus knot, a cabled knot, a composite knot, or a satellite knot, we must decompose the knot exterior along essential annuli or tori, certify the incompressibility of the tori in the latter case, and efficiently triangulate what remains; all of this is accomplished in \cite{lackenby-knottedness}.  In order to certify a torus knot, we must then check that what remains is a pair of solid tori, and this can be certified by work of Ivanov \cite{ivanov}; similarly, for Theorem~\ref{thm:main-cable-composite} (a combination of Theorems~\ref{thm:cabled-np} and \ref{thm:composite-np}) we must certify that some of the components are nontrivial knot complements.

The part where Kuperberg's techniques, and hence GRH, play a role in the certification of satellite knots is the verification that the incompressible tori are not boundary parallel.  For this we need to check that some component of their complement has boundary $T^2 \sqcup T^2$ but is not $T^2 \times I$, which we verify using the following result.
\begin{theorem}[Theorem~\ref{thm:solid-torus-knot-rep}] \label{thm:main-solid-torus-rep}
Let $P \subset S^1 \times D^2$ be a knot.  There is a representation
\[ \pi_1((S^1 \times D^2) \ssm P) \to SL_2(\C) \]
with nonabelian image if and only if $P$ is not isotopic to a core of the solid torus.
\end{theorem}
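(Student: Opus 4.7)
The easy direction is immediate: if $P$ is isotopic to the core $S^1 \times \{0\}$, then $X := (S^1 \times D^2) \ssm \nu(P) \cong T^2 \times I$, so $\pi_1(X) \cong \Z^2$ and every $SL_2(\C)$ representation has abelian image.

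For the converse, my plan is to embed $S^1 \times D^2$ into $S^3$ in the standard unknotted way and set $L := P \cup C$, where $C$ is the core of the complementary solid torus.  Since removing an open tubular neighborhood of $C$ from $S^3$ recovers $S^1 \times D^2$, one checks immediately that $S^3 \ssm \nu(L) \cong X$, so it suffices to produce a nonabelian representation of $\pi_1(S^3 \ssm L)$.  The key preliminary observation is that $L$ is the Hopf link if and only if $P$ is isotopic to a core of $S^1 \times D^2$: the complement of an unknotted component of the Hopf link is a solid torus in which the other component is forced to be a core.  So under our hypothesis, $L$ is not the Hopf link.

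If $X$ is reducible, then the irreducibility of $S^1 \times D^2$ forces $P$ to lie in a ball $B \subset S^1 \times D^2$, and the Seifert--van Kampen theorem applied across $\partial B$ identifies $\pi_1(X)$ with the nontrivial free product $\Z * \pi_1(B \ssm \nu(P))$, which contains a copy of the free group $F_2$ generated by a $\Z$-factor generator together with the meridian of $P$; sending these generators to generic non-commuting elements of $SL_2(\C)$ yields a nonabelian representation.

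If $X$ is irreducible, then $X$ is a compact orientable irreducible $3$-manifold with two torus boundary components, and by the Hopf link observation it is not homeomorphic to $T^2 \times I$.  I would then invoke Thurston's geometrization theorem together with the JSJ decomposition of $X$: at least one JSJ piece must be either a finite-volume hyperbolic manifold or a Seifert fibered space whose base orbifold carries a cone point.  In the hyperbolic case, the discrete faithful $PSL_2(\C)$ representation of that piece lifts to $SL_2(\C)$ by Culler's standard obstruction-theoretic argument and is manifestly nonabelian; in the Seifert case, the quotient onto the orbifold fundamental group of the base is a nontrivial free product of cyclic groups (a $\Z$ factor together with at least one finite cyclic factor) and admits a nonabelian $SL_2(\C)$ representation directly.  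The main obstacle I expect is then promoting such a representation from a single JSJ piece to all of $\pi_1(X)$; the plan is to exploit the abundant flexibility of representations of $\pi_1(T^2) \cong \Z^2$ into $SL_2(\C)$ in matching prescribed values along peripheral $\Z^2$ subgroups, and to extend piece-by-piece across each JSJ torus in the graph-of-groups decomposition.
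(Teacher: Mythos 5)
The forward direction and the case where $P$ lies in a $3$-ball match the paper's argument. But your treatment of the main (irreducible) case has a genuine gap at exactly the point you flag as the ``main obstacle'': extending a nonabelian representation from a single JSJ piece to all of $\pi_1(X)$. The flexibility of representations of $\pi_1(T^2) \cong \Z^2$ into $SL_2(\C)$ is not the relevant quantity. The problem is that a given representation of the peripheral $\Z^2$ need not extend over the \emph{adjacent} piece: for a piece $B$ with torus boundary, the restriction map from the $SL_2(\C)$ character variety of $B$ to that of $\partial B$ has image of complex dimension at most one inside a two-dimensional character variety (this is what the $A$-polynomial records), so a generic boundary holonomy does not extend. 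Concretely, if your chosen hyperbolic piece carries the discrete faithful representation, its restriction to each boundary torus is a nontrivial parabolic representation, and there is no reason this extends over a neighboring piece (an adjacent knot exterior, say, admits only abelian representations killing its longitude together with a constrained curve of nonabelian ones). Arranging trivial holonomy on the internal tori while staying nonabelian is essentially as hard as the original problem. This extension-across-splice-tori difficulty is precisely what Zentner's theorem resolves, and his proof requires instanton Floer homology and holonomy perturbations, not an elementary gluing argument.

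The paper sidesteps the JSJ analysis entirely: it Dehn fills $E_P$ along $\mu + r\lambda$ on $\partial N(P)$ and along a family of slopes on $S^1 \times \partial D^2$ to produce homology spheres $Y_n$; if some $Y_n$ is not $S^3$, Zentner's theorem gives an irreducible $SL_2(\C)$ representation of $\pi_1(Y_n)$ which pulls back to $\pi_1(E_P)$. If every $Y_n \cong S^3$, Gordon--Luecke forces $P$ to admit nontrivial solid-torus surgeries of every slope $\mu+r\lambda$, and the Berge--Gabai classification of such surgeries then forces $P$ to be isotopic into $S^1 \times \partial D^2$; those residual torus-knot cases are handled by Kronheimer--Mrowka for $|p|, q \geq 2$ and by an explicit presentation and representation for $p = \pm 1$. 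To salvage your approach you would need either to quote a Zentner-type result for toroidal manifolds directly or to supply a genuine argument for the gluing step; as written the proposal does not constitute a proof. (A secondary, fixable point: a Seifert JSJ piece need not be hyperbolic or carry a cone point --- it can be $S^1 \times \Sigma$ for a planar surface $\Sigma$ --- though such pieces still have nonabelian fundamental group.)
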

Theorem~\ref{thm:main-solid-torus-rep} is analogous to the theorem of Kronheimer and Mrowka \cite{km-su2} that nontrivial knots in $S^3$ admit nonabelian $SU(2)$ representations, which Kuperberg used to certify knottedness.  (Our proof relies on recent work of Zentner \cite{zentner}, which in turn depends on \cite{km-su2}.)  In both cases, these representations are complex points of algebraic varieties defined over $\Z$, and the use of GRH allows Kuperberg and us to assert that these varieties also have $\F_p$-points where $p$ is a reasonably small prime.  Our certificate that some component is not $T^2 \times I$ is then a nonabelian $SL_2(\F_p)$ representation of its fundamental group.

The proof that $\torusknot \in \coNP$ assuming GRH follows similar lines, and in fact makes use of Theorem~\ref{thm:main-satellite}.  We have already shown how to certify that a non-torus knot $K$ is a satellite knot, but it might be hyperbolic instead.  We use the fact that the peripheral element $\mu^{rs}\lambda$ belongs to the center of the knot group of the torus knot $T_{r,s}$, whereas non-torus knots have trivial center \cite{burde-zieschang}.  We can therefore certify that a knot is not $T_{r,s}$ by finding an $SL_2(\F_p)$ representation $\rho$ of its knot group for which $\rho(\mu^{rs}\lambda)$ is not in the center of the image.  Theorem~\ref{thm:hyperbolic-certificates} asserts that hyperbolic knots admit such certificates: we can find $SL_2(\C)$ representations of this form, e.g.\ any faithful representation, and then the same appeal to GRH provides an $SL_2(\F_p)$ representation as well.

The above argument relies on the specific pair $(r,s)$, but this turns out to not be a problem if we wish to certify that an $n$-crossing diagram $D$ does not represent any torus knot at all, since $D$ can only represent $T_{r,s}$ if $|rs| < 3n$ (see Lemma~\ref{lem:crossing-number-pq}).  We thus need only rule out at most $O(n\log n)$ torus knots $T_{r,s}$ to conclude that $D$ does not represent any torus knot.  In fact, given a polynomial-time method to distinguish any two distinct torus knots (see Lemma~\ref{lem:alexander-signature} and  Proposition~\ref{prop:compute-signature}), this observation shows that the two halves of Theorem~\ref{thm:main-torus} are equivalent: the torus knot recognition problem is in $\NP$ if and only if each $T_{r,s}$ recognition problem is in $\NP$, and likewise for $\coNP$.

Finally, in order to certify that a knot is cabled or composite, after decomposing along an essential annulus we must verify that one or both of the remaining components are nontrivial knot complements rather than solid tori.  In this case we can use Lackenby's work \cite[Theorem~1.5]{lackenby-knottedness} to certify that they have nonzero Thurston norm, and this shows that they are not solid tori as needed.

\subsection*{Organization}
In Section~\ref{sec:alex-sig} we show that the Alexander polynomial and signature uniquely determine each torus knot among the set of all torus knots, and that they can be computed in polynomial time.  After a review of some facts about normal surfaces in Section~\ref{sec:normal-surfaces}, Section~\ref{sec:torus-knot-np} proves half of Theorem~\ref{thm:main-torus}, namely that the torus knot recognition problem is in \NP{}.  Section~\ref{sec:hyperbolic-certificates} shows how to certify that hyperbolic knots are not torus knots; we explain in detail the role played by representation varieties and GRH, following \cite{kuperberg}.  In Section~\ref{sec:satellite-np} we combine techniques from Sections~\ref{sec:torus-knot-np} and \ref{sec:hyperbolic-certificates} to prove Theorem~\ref{thm:main-satellite} and complete the proof of Theorem~\ref{thm:main-torus}.  Finally, in Section~\ref{sec:cable-composite} we prove Theorem~\ref{thm:main-cable-composite}, showing that the cable and composite knot recognition problems are also in $\NP{}$.

\subsection*{Acknowledgments}
We are grateful to the referee for providing useful feedback which improved the quality of the paper.  JAB was supported by NSF Grant DMS-1406383 and NSF CAREER Grant DMS-1454865.  SS would like to thank the Max Planck Institute for Mathematics for its hospitality during some of the period in which this paper was completed.

\section{Alexander polynomials and signatures of torus knots} \label{sec:alex-sig}

In this section we show that two torus knots are isotopic if and only if they have the same Alexander polynomial and signature, and that given a diagram for $K$ there is a polynomial time algorithm to either find the unique torus knot $T$ such that $\Delta_K(t) = \Delta_T(t)$ and $\sigma(K) = \sigma(T)$ or determine that $T$ does not exist.  This will imply that the two claims of Theorem~\ref{thm:main-torus} are equivalent, namely that $\torusknot$ is in $\NP \cap \coNP$ if and only if the $T_{r,s}$ recognition problem is for each pair $r,s$, because of the following bound.

\begin{lemma} \label{lem:crossing-number-pq}
If $\Delta_K(t) = \Delta_{T_{r,s}}(t)$ for some integers $r > s \geq 2$, then $rs < 3\cross(K)$ where $\cross$ denotes crossing number.
\end{lemma}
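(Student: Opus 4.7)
The plan is to combine the closed form of the Alexander polynomial of a torus knot with two classical bounds: the breadth of $\Delta_K$ is at most $2g(K)$, and Seifert's algorithm bounds $2g(K)$ by $\cross(K)$.

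First, I would recall that
\[
\Delta_{T_{r,s}}(t) = \frac{(t^{rs}-1)(t-1)}{(t^r-1)(t^s-1)},
\]
whose breadth (the difference between the largest and smallest exponents in a Laurent representative) is $rs + 1 - r - s = (r-1)(s-1)$. The hypothesis $\Delta_K = \Delta_{T_{r,s}}$ therefore gives that the breadth of $\Delta_K$ equals $(r-1)(s-1)$.

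Next, I would invoke two standard inequalities. The breadth of the Alexander polynomial of any knot $K$ is at most $2g(K)$, because $\Delta_K(t) = \det(tV - V^T)$ for a Seifert matrix $V$ of size $2g(K) \times 2g(K)$. Seifert's algorithm, applied to any diagram of $K$ with $c$ crossings and $n \geq 1$ Seifert circles, produces a Seifert surface of genus $(c-n+1)/2 \leq c/2$, so $2g(K) \leq \cross(K)$. Chaining these yields $(r-1)(s-1) \leq \cross(K)$.

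Finally, the result follows by elementary algebra. Writing $rs = (r-1)(s-1) + (r+s-1)$, the desired inequality $rs < 3\cross(K)$ reduces to $r + s - 1 < 2\cross(K)$, and combining with $\cross(K) \geq (r-1)(s-1)$ it suffices to prove the purely algebraic claim $r + s - 1 < 2(r-1)(s-1)$, i.e.\ $2rs - 3r - 3s + 3 > 0$. A short check shows this holds strictly for all $r > s \geq 2$ except the boundary case $(r,s) = (3,2)$, where it equals zero; in that remaining case, $\Delta_K = t - 1 + t^{-1}$ is nontrivial so $K$ is a nontrivial knot, forcing $\cross(K) \geq 3$ and hence $rs = 6 < 9 \leq 3\cross(K)$. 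The only real obstacle is the tightness of the algebraic inequality at the trefoil parameters, and this is cleanly handled by the observation that every nontrivial knot has crossing number at least three.
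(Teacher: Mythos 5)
Your proof is correct and follows essentially the same route as the paper: both derive the key inequality $(r-1)(s-1) \leq \cross(K)$ from the degree (breadth) of the Alexander polynomial together with the Seifert-genus bound, and then finish with elementary algebra plus the observation that a nontrivial knot has crossing number at least $3$. The only difference is cosmetic: the paper uses $(r-2)(s-2)\geq 0$ to get the uniform bound $rs \leq 2\cross(K)+2$ and invokes $\cross(K)\geq 3$ once at the end, whereas you isolate the single boundary case $(r,s)=(3,2)$ and handle it separately.
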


\begin{proof}
To see this, we use the well-known fact that $2\deg(\Delta_K(t)) \leq \cross(K)$ (see Exercise~4 of \cite[Chapter~VIII]{crowell-fox}), which in this case gives $(r-1)(s-1) \leq \cross(K)$.  Combining $rs \leq \cross(K)+r+s-1$ with $r+s \leq \frac{1}{2}rs+2$ (which is equivalent to $(r-2)(s-2)\geq 0$) gives $rs \leq \cross(K)+\frac{1}{2}rs+1$, or $rs \leq 2\cross(K)+2$.  But $K$ is nontrivial since $\Delta_K(t) \neq 1$, so its crossing number is at least 3 and hence the right hand side is less than $3\cross(K)$.
\end{proof}

As explained in the introduction, Lemma~\ref{lem:crossing-number-pq} reduces the torus knot recognition problem to the collection of $T_{r,s}$ recognition problems, since we can just check that a given $n$-crossing diagram does or does not represent $T_{r,s}$ for each pair of coprime integers $(r,s)$ with $|r| > s \geq 2$ and $|rs| < 3n$.  There are only $O(n\log n)$ such pairs, since for each value of $r$ there are at most $\frac{3n}{|r|}$ possible values of $s$, so if each one can be checked individually in polynomial time then so can all of them.


\begin{lemma} \label{lem:alexander-signature}
If $K$ and $K'$ are torus knots with the same Alexander polynomial and signature, then $K$ is isotopic to $K'$.
\end{lemma}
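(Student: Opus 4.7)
The plan is to prove first that the Alexander polynomial alone determines the unordered pair $\{|r|,|s|\}$ of a torus knot $T_{r,s}$, and then to use the signature to distinguish $T_{r,s}$ from its mirror $T_{r,-s}$.

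For the first step, I would combine the classical product formula
\[\Delta_{T_{r,s}}(t)=\frac{(t^{rs}-1)(t-1)}{(t^r-1)(t^s-1)},\qquad \gcd(r,s)=1,\ r,s\geq 2,\]
with the cyclotomic factorization $t^n-1=\prod_{d\mid n}\Phi_d(t)$. Because $\gcd(r,s)=1$, no $\Phi_d$ with $d>1$ divides both $t^r-1$ and $t^s-1$, so after cancellation one obtains
\[\Delta_{T_{r,s}}(t)=\prod_{\substack{d\mid rs,\,d>1\\ d\nmid r,\,d\nmid s}}\Phi_d(t).\]
Since $r,s\geq 2$, the index $d=rs$ itself satisfies the conditions of the product, while every index $d$ that appears divides $rs$; hence $rs$ is the largest integer for which $\Phi_d$ divides $\Delta_{T_{r,s}}(t)$, and is therefore recoverable from $\Delta_{T_{r,s}}$. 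Once $rs$ is known, the largest divisor $d>1$ of $rs$ for which $\Phi_d$ does \emph{not} divide $\Delta_{T_{r,s}}(t)$ must divide $r$ or $s$, so it equals $\max(r,s)$; the other factor is then $rs/\max(r,s)$. Hence $\Delta_{T_{r,s}}$ determines the unordered pair $\{r,s\}$.

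For the second step, recall that $\Delta_{\bar K}(t)=\Delta_K(t)$ while $\sigma(\bar K)=-\sigma(K)$, so a torus knot and its mirror share the same Alexander polynomial but have opposite signatures. The two chiralities are therefore distinguished by the signature provided $\sigma(T_{r,s})\neq 0$; for this I would invoke Murasugi's explicit formula for the signature of torus knots, which gives $\sigma(T_{r,s})\neq 0$ for all coprime $r,s\geq 2$. Combining the two steps, two torus knots with equal Alexander polynomial and signature have the same unordered $\{|r|,|s|\}$ and the same chirality, so they are isotopic.

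The main obstacle is the cyclotomic bookkeeping in the first step; once that is in place, recovering $\{r,s\}$ and citing Murasugi's signature formula are both routine.
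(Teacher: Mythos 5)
Your proof is correct, and it follows the same two-stage outline as the paper's (Alexander polynomial determines the torus knot up to mirror image; signature pins down the chirality), but the way you extract $\{r,s\}$ from $\Delta$ is genuinely different. The paper recovers $rs$ by locating the root of $\Delta_{T_{r,s}}$ on the unit circle with smallest positive argument (namely $e^{2\pi i/rs}$) and recovers $(r-1)(s-1)$ as twice the genus from the degree of $\Delta$; it then gets $r+s = rs-(r-1)(s-1)+1$ and reads off $r,s$ as the roots of $x^2-(r+s)x+rs$. You instead use the cyclotomic factorization $\Delta_{T_{r,s}} = \prod \Phi_d$ over $d\mid rs$, $d>1$, $d\nmid r$, $d\nmid s$, recovering $rs$ as the largest $d$ with $\Phi_d\mid\Delta$ and $\max(r,s)$ as the largest divisor $d>1$ of $rs$ with $\Phi_d\nmid\Delta$; your bookkeeping checks out, including the existence of such a missing divisor (take $d=r$). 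Your argument is arguably cleaner as pure algebra, while the paper's degree-based extraction feeds directly into the polynomial-time identification algorithm of Proposition~\ref{prop:which-torus-knot}, where one enumerates candidate values of $s$ and solves for $r$ from the degree. For the chirality step, you appeal to Murasugi's signature formula to get $\sigma(T_{r,s})\neq 0$, whereas the paper cites Rudolph's theorem that braid-positive knots have strictly negative signature; both suffice, though Rudolph's statement hands you the sign convention (positive torus knots have $\sigma<0$) with no computation.
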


\begin{proof}
We first claim that $K$ is determined up to chirality by its Alexander polynomial.  Supposing that $K = T_{r,s}$ for some relatively prime $r,s \geq 2$, it is well-known (see e.g.\ \cite{lickorish}) that $T_{r,s}$ has genus $g = \frac{(r-1)(s-1)}{2}$ and Alexander polynomial
\[ \Delta_K(t) = \frac{1}{t^g} \frac{(t^{rs}-1)(t-1)}{(t^r-1)(t^s-1)}, \]
whose leading term is $t^g$.  Thus $\Delta_K(t)$ determines both $rs$, since the least value of $\theta > 0$ for which $\Delta_K(e^{i\theta})=0$ is $\theta = \frac{2\pi}{rs}$, and $(r-1)(s-1)$, as twice the degree of its leading term.  From these we can deduce the value of
\[ r+s = rs - (r-1)(s-1) + 1 \]
as well, and then $r$ and $s$ are uniquely determined as the roots of $x^2 - (r+s)x + rs$.

The Alexander polynomial does not distinguish between a torus knot $T_{r,s}$ and its mirror $T_{-r,s}$, but the signature does; we adopt the sign convention for which the right-handed trefoil has signature $-2$.  Rudolph \cite{rudolph} showed that braid-positive knots have negative signature, so if $K$ is a torus knot with $\Delta_K(t) = \Delta_{T_{r,s}}(t)$ for some $r,s \geq 2$, then it follows that $K = T_{r,s}$ if $\sigma(K) < 0$ and $K = T_{-r,s}$ if $\sigma(K) > 0$.
\end{proof}

Here, and throughout the rest of this paper, we write ``$y = \poly(x_1,\dots,x_n)$'' to indicate that the value of $y$ is bounded above by some fixed polynomial in $x_1$ through $x_n$.

Since we are using the Alexander polynomial and signature to distinguish torus knots, we need to know that they can be computed from a diagram $D$ with $n$ crossings in $\poly(n)$ time.  We first recall some facts from computational algebra.  Given a polynomial $f \in \Z[t]$, we will write $\maxnorm{f}$ to denote the largest absolute value of any of its coefficients.

\begin{lemma} \label{lem:det-over-zt}
Let $A$ be a $k\times k$ matrix with entries $a_{ij} \in \Z[t]$, each of which has degree at most $d \geq 1$ and satisfies $\maxnorm{a_{ij}} \leq C$ for some constant $C$.  Then $\det(A)$ can be computed in $\poly(dk,\log C)$ time.
\end{lemma}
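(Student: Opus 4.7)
The plan is to compute $\det(A)$ by evaluation and interpolation, thereby reducing the problem to the well-understood task of computing determinants of integer matrices. Since each entry of $A$ has degree at most $d$, the polynomial $\det(A) \in \Z[t]$ has degree at most $dk$, and so is uniquely determined by its values at $N = dk+1$ distinct integers.

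First I would fix the evaluation points $t_i = i$ for $i = 0, 1, \dots, dk$ and form the integer matrices $A(t_i)$. Each entry of $A(t_i)$ has absolute value at most $C(d+1)(dk)^d$, with bit-size $\poly(dk, \log C)$, so constructing all $dk+1$ of these matrices takes time $\poly(dk, \log C)$.

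Next, for each $i$ I would compute $\det(A(t_i)) \in \Z$ using a standard polynomial-time algorithm for integer determinants, such as the Bareiss algorithm. Its running time is polynomial in $k$ and the bit-size of the entries; by Hadamard's inequality the output $\det(A(t_i))$ also has bit-size $\poly(dk, \log C)$, so this step is $\poly(dk, \log C)$ as well.

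Finally, I would recover $\det(A)$ by Lagrange interpolation through the $dk+1$ sample pairs $\bigl(t_i,\det(A(t_i))\bigr)$; because $\det(A)$ is a priori an element of $\Z[t]$, this returns it exactly. Interpolation runs in time polynomial in $dk$ and the bit-size of the samples, which is again $\poly(dk, \log C)$.

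The main point requiring care is to verify that no intermediate quantity exceeds $\poly(dk, \log C)$ bits. Naive Gaussian elimination directly over $\Z[t]$ is the wrong thing to do here, since both the degrees and coefficient magnitudes of the pivots can blow up superpolynomially; the evaluation-interpolation route avoids working in $\Z[t]$ at all except for the very last step, so the bounds at each stage follow immediately from Hadamard's inequality and standard estimates for integer arithmetic.
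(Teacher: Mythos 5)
Your proposal is correct and follows essentially the same route as the paper's proof: evaluate at the integer points $t=0,1,\dots,dk$, compute the resulting integer determinants in polynomial time (the paper cites \cite[Theorem~5.12]{gathen-gerhard} where you invoke Bareiss, but the bit-size analysis via the entry bound $(d+1)C(dk)^d$ and Hadamard's inequality is identical), and recover $\det(A)$ by Lagrange interpolation. No gaps.
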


\begin{proof}
For each $i=0,1,\dots,dk$, we let $A_i = A|_{t=i}$ denote the matrix obtained from $A$ by substituting $t=i$; then each entry of $A_i$ has absolute value at most
\[ (d+1)Ci^d \leq (d+1)C(dk)^d. \]
The determinant of an $k\times k$ integer matrix with all entries between $-C'$ and $C'$ can be computed in $\poly(k,\log C')$ time, see e.g.\ \cite[Theorem~5.12]{gathen-gerhard}.  Thus $\det(A_i)$ can be computed in $\poly(k, \log((d+1)C(dk)^d)) = \poly(dk, \log C)$ time.  Hadamard's inequality \cite[Theorem~16.6]{gathen-gerhard} says that
\[ \left|\det(A_i)\right| \leq k^{k/2}\big((d+1)Ci^d\big)^k, \]
so that $\log \left|\det(A_i)\right| = O(dk\log dk)$ for $0 \leq i \leq dk$.

Since $\det(A)$ is a polynomial $f(t)$ of degree at most $dk$, it is determined by the values $f(i) = \det(A_i)$ for $i=0,\dots,dk$.  We can compute each of these in a total of $\poly(dk,\log(C))$ time, and then we reconstruct $\det(A)=f(t)$ by Lagrange interpolation, using a total of $O((dk)^3)$ arithmetic operations on integers $\det(A_i)$ with $O(dk\log dk)$ digits each.
\end{proof}

Lemma~\ref{lem:det-over-zt} implies that the characteristic polynomial of a $k\times k$ integer matrix $A$ with entries between $-C$ and $C$ can be computed as $\det(tI-A)$ in $\poly(k,\log C)$ time.

\begin{lemma} \label{lem:gcd-over-zt}
Let $f_1,\dots,f_k \in \Z[t]$ be polynomials of degree at most $d\geq 1$ and with $\maxnorm{f_i} \leq C$ for some constant $C$.  Then $\gcd(f_1,\dots,f_k)$ can be computed in $\poly(k,d,\log C)$ time.
\end{lemma}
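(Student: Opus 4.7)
The plan is to reduce the $k$-ary gcd to a sequence of binary gcds over $\Z[t]$ and invoke a classical polynomial-time algorithm for each. Using the identity
\[ \gcd(f_1,\dots,f_k) = \gcd(\gcd(f_1,\dots,f_{k-1}),\, f_k), \]
I would first compute $g_1 := \gcd(f_1, f_2)$ and then iteratively set $g_j := \gcd(g_{j-1}, f_{j+1})$ for $j = 2,\dots,k-1$, so that $g_{k-1}$ is the desired gcd.

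The next step is to bound the bit-sizes of the intermediate polynomials $g_j$. Each $g_j$ divides $f_1$, so by the Mignotte factor bound (see e.g.\ \cite[Theorem~6.33]{gathen-gerhard}) its coefficients have absolute value at most $2^d\sqrt{d+1}\,C$, i.e.\ bit-size $O(d + \log C)$. Consequently every binary gcd encountered in the iteration takes inputs of degree at most $d$ and max-norm at most $C' = 2^{O(d)}C$, which is $\poly(d,\log C)$ many bits.

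The third step is the binary gcd itself. Here I would cite a standard polynomial-time gcd algorithm in $\Z[t]$, such as the subresultant pseudo-remainder sequence or a small-primes modular algorithm followed by Chinese remaindering and rational reconstruction (see e.g.\ \cite[Chapter~6]{gathen-gerhard}). Either runs in time $\poly(d,\log C')$ on inputs of degree at most $d$ and max-norm at most $C'$, giving $\poly(d,\log C)$ per binary gcd and $\poly(k,d,\log C)$ in total.

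The main obstacle is the familiar pitfall of coefficient explosion: a naive Euclidean algorithm in $\Q[t]$, or even cleared-denominator pseudodivision in $\Z[t]$, can cause intermediate coefficients to grow superpolynomially, so one cannot simply iterate Euclid's algorithm. Avoiding this is precisely why a specialized algorithm (subresultant or modular) must be invoked, but such algorithms are classical and well-documented, so the substantive content of the argument is the Mignotte-based coefficient bound on the intermediate $g_j$ together with the appropriate citation.
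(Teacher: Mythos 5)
Your proposal is correct and follows essentially the same route as the paper: iterate binary gcds, control the intermediate coefficients via Mignotte's factor bound using the fact that each partial gcd divides $f_1$, and invoke the polynomial-time binary gcd algorithm from von zur Gathen--Gerhard (the paper cites Theorem~6.62 there). The only differences are cosmetic, such as your slightly more explicit remark about coefficient explosion in naive Euclidean division.
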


\begin{proof}
In the case $k=2$, we can compute $\gcd(f_1,f_2)$ in $\poly(d,\log C)$ time according to \cite[Theorem~6.62]{gathen-gerhard}.  We would like to apply this algorithm repeatedly in the general case, by computing $g_i := \gcd(f_1,\dots,f_i)$ as the greatest common divisor of $g_{i-1} = \gcd(f_1,\dots,f_{i-1})$ and $f_i$ for each $i=2,\dots,k$, but we need to ensure that the intermediate norms $\maxnorm{g_{i-1}}$ stay reasonably small.

To that end, suppose we have polynomials $p,q,r \in \Z[t]$ of degrees $a \geq 1,b,c$, where $qr$ divides $p$ and hence $b+c \leq a$.  Then Mignotte's bound \cite[Corollary~6.33]{gathen-gerhard} says that
\[ \maxnorm{q}\maxnorm{r} \leq (a+1)^{1/2}2^{b+c} \maxnorm{p}. \]
In the above case, since $g_{i-1}$ divides $f_1$, we have $\deg(g_{i-1}) \leq d$ and $\maxnorm{g_{i-1}} \leq (d+1)^{1/2}2^d C$, and so we can compute $g_i = \gcd(g_{i-1},f_i)$ in 
\[ \poly\left(d, \log\big((d+1)^{1/2}2^dC\big)\right) = \poly(d, \log C) \]
time.  This is independent of $i$, so after $k$ steps we can determine $g_k$ in the claimed time.
\end{proof}

\begin{proposition} \label{prop:compute-signature}
Given a diagram $D$ with $n$ crossings representing the knot $K$, there is an algorithm which computes $\Delta_K(t)$ and $\sigma(K)$ in time $\poly(n)$.
\end{proposition}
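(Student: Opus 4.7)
The plan is to reduce both computations to linear-algebra operations on a Seifert matrix. Given the $n$-crossing diagram $D$, I would first apply Seifert's algorithm to produce an oriented spanning surface $\Sigma$ for $K$. If $D$ has $s$ Seifert circles, then $\Sigma$ has first Betti number $2g = n-s+1 \leq n$, so a basis $\{\alpha_1,\dots,\alpha_{2g}\}$ of $H_1(\Sigma)$ can be read off from the usual planar-graph construction (one cycle per bounded region of the Seifert-circle picture, together with one generator for each band joining non-nested Seifert circles), all in polynomial time. Each generator is a simple closed curve whose length in $\Sigma$ is $O(n)$, so its push-off lives in a neighborhood of $\Sigma$ whose projection to the plane crosses $D$ in $O(n)$ places. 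The Seifert matrix entry $V_{ij} = \mathrm{lk}(\alpha_i,\alpha_j^+)$ is therefore a sum of $O(n)$ signed contributions and satisfies $|V_{ij}|=O(n)$, and the whole $2g\times 2g$ matrix $V$ can be assembled in $\poly(n)$ time.

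For the Alexander polynomial I would use $\Delta_K(t) \doteq \det(tV-V^T)$. The matrix $tV-V^T$ has entries in $\Z[t]$ of degree at most $1$ and with $\maxnorm{\,\cdot\,} = O(n)$, so Lemma~\ref{lem:det-over-zt} (with $d=1$, $k=2g\leq n$, $C=O(n)$) computes $\det(tV-V^T)\in\Z[t]$ in $\poly(n)$ time. Normalizing (dividing out the appropriate power of $t$ and fixing sign so that $\Delta_K(1)=\pm 1$) takes negligible additional time.

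For the signature, I would compute $\sigma(K)=\sigma(V+V^T)$, where $V+V^T$ is a symmetric integer matrix of size at most $n$ with entries of magnitude $O(n)$. One efficient route is to compute the characteristic polynomial $p(t)=\det\bigl(tI-(V+V^T)\bigr)$ in $\poly(n)$ time via Lemma~\ref{lem:det-over-zt}, and then determine the numbers of positive and negative real roots (with multiplicity) using a Sturm sequence; this last step is a standard $\poly(\deg p,\log\maxnorm{p})=\poly(n)$ computation on $\Z[t]$. An alternative that avoids Sturm sequences is to apply symmetric Gaussian elimination (rational $LDL^T$ decomposition) over $\Q$ and read $\sigma$ off as the difference of the numbers of positive and negative diagonal entries of $D$; standard bit-complexity bounds for rational Gaussian elimination keep this in $\poly(n)$ time as well.

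The computations themselves are routine; the only point needing care is the bookkeeping that makes sure the Seifert-matrix stage produces a matrix whose size and entry magnitudes are genuinely $\poly(n)$, since the polynomial-time claims for Lemma~\ref{lem:det-over-zt} and for Sturm-sequence real-root counting then take over. I expect this verification of the Seifert-algorithm step, together with confirming that symmetric Gaussian elimination (or Sturm-sequence root counting) respects the polynomial bit-complexity bounds on the intermediate rationals, to be the main -- though still essentially standard -- obstacle.
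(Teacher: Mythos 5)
Your proposal is correct, but it takes a genuinely different route from the paper. For the Alexander polynomial, the paper works from the Wirtinger presentation: it abelianizes the Fox derivative matrix, computes the determinants of all $(n-1)\times(n-1)$ minors via Lemma~\ref{lem:det-over-zt}, and then takes their greatest common divisor using Lemma~\ref{lem:gcd-over-zt}; your single determinant $\det(tV-V^T)$ of a Seifert matrix avoids the gcd step (and hence Lemma~\ref{lem:gcd-over-zt}) entirely, at the cost of having to justify that the Seifert matrix itself --- a homology basis for the Seifert surface and the linking numbers of push-offs --- can be extracted from $D$ in polynomial time with polynomially bounded entries. That bookkeeping is standard but is exactly the part the paper's route sidesteps, since the Fox derivative matrix and the Goeritz matrix are read directly off the diagram. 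For the signature, the paper uses the Gordon--Litherland formula $\sigma(K)=\sign(G)-\mu$ with a Goeritz matrix from a checkerboard coloring rather than $\sigma(V+V^T)$, but from that point on the two arguments converge: both compute the characteristic polynomial of a symmetric integer matrix via Lemma~\ref{lem:det-over-zt} and count positive and negative eigenvalues (the paper by Descartes' rule of signs, you by Sturm sequences). One small point worth making explicit in your version: the eigenvalue count is only straightforward because $\det(V+V^T)=\pm\Delta_K(-1)=\pm\det(K)$ is odd and hence nonzero, just as the paper invokes $\det(G)=\pm\det(K)\neq 0$; your Sturm-sequence formulation tolerates zero eigenvalues anyway, but your $LDL^T$ alternative would need symmetric (Bunch--Kaufman-style) pivoting to be fully rigorous. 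Also take care that your Seifert-matrix sign convention matches the paper's normalization, under which the right-handed trefoil has signature $-2$, since Lemma~\ref{lem:alexander-signature} uses the sign of $\sigma$ to pin down chirality.
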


\begin{proof}
In order to compute $\Delta_K(t)$, we take the Wirtinger presentation associated to $D$ and compute the associated $n \times n$ matrix of Fox derivatives in $O(n^2)$ time.  We abelianize this to get a matrix with entries in $\Z[t^{\pm 1}]$, and then $\Delta_K(t)$ is the greatest common divisor of the determinants of the $(n-1)\times(n-1)$ minors of this matrix.  We multiply the minors by a power of $t$ so their entries lie in $\Z[t]$ and compute their determinants using Lemma~\ref{lem:det-over-zt}.  The greatest common divisor can then be computed in time $\poly(n)$ by Lemma~\ref{lem:gcd-over-zt}, since all of the coefficients and degrees of polynomials which occur along the way have length at most $\poly(n)$.  We then normalize $\Delta_K(t)$ so that it is a symmetric Laurent polynomial and $\Delta_K(1) = 1$.

For $\sigma(K)$, we use Gordon and Litherland's signature formula \cite{gordon-litherland-signature}, which says that $\sigma(K) = \sign(G) - \mu$, where $G$ is a Goeritz matrix for $D$ and $\mu$ is a certain correction term.  To construct these, we orient $D$ and choose a checkerboard coloring of $D$, label the white regions $X_0,\dots,X_k$, and assign each crossing $c$ a sign $\eta(c) = \pm 1$ and a type (I or II) as shown below.
\[
\begin{tikzpicture}
    \fill[fill=lightgray] (-0.5,0.5)--(0.5,0.5)--(-0.5,-0.5)--(0.5,-0.5);
    \draw (-0.1,0.1) -- (-0.5,0.5);
    \draw (-0.5,-0.5) -- (0.5,0.5);
    \draw (0.5,-0.5) -- (0.1,-0.1);
    \node at (0,-0.75) {$\eta = +1$};
\end{tikzpicture}
\qquad
\begin{tikzpicture}
    \fill[fill=lightgray] (-0.5,0.5)--(0.5,0.5)--(-0.5,-0.5)--(0.5,-0.5);
    \draw (0.1,0.1) -- (0.5,0.5);
    \draw (0.5,-0.5) -- (-0.5,0.5);
    \draw (-0.5,-0.5) -- (-0.1,-0.1);
    \node at (0,-0.75) {$\eta = -1$};
\end{tikzpicture}
\qquad\qquad\qquad
\begin{tikzpicture}
    \fill[fill=lightgray] (-0.5,0.5)--(0.5,0.5)--(-0.5,-0.5)--(0.5,-0.5);
    \draw[->] (-0.5,0.5) -- (0.5,-0.5);
    \draw[->] (-0.5,-0.5) -- (0.5,0.5);
    \node at (0,-0.75) {Type I};
\end{tikzpicture}
\qquad
\begin{tikzpicture}
    \fill[fill=lightgray] (-0.5,0.5)--(0.5,0.5)--(-0.5,-0.5)--(0.5,-0.5);
    \draw[->] (0.5,-0.5) -- (-0.5,0.5);
    \draw[->] (-0.5,-0.5) -- (0.5,0.5);
    \node at (0,-0.75) {Type II};
\end{tikzpicture}
\]
We then build a $(k+1) \times (k+1)$ matrix $G'$ whose off-diagonal entries $g_{ij}$, $0 \leq i,j \leq k$, are defined as $-\sum \eta(c)$ over all crossings $c$ incident to both $X_i$ and $X_j$, and for which $g_{ii} = -\sum_{j\neq i} g_{ij}$.  The Goeritz matrix $G$ is the $k\times k$ submatrix $\big(g_{ij}\big)_{1\leq i,j \leq k}$, and then we let $\mu = \sum \eta(c)$ over all type II crossings $c$.

From the construction we see that $k \leq n$: forgetting the crossing information turns $D$ into a planar 4-valent graph with $n$ vertices, and hence $2n$ edges and $2n - n + 2 = n+2$ regions in its complement, so at most $n+1$ are white.  The entries of $G$ have magnitude at most $n$, since each $|g_{ij}|$ or $|g_{ii}|$ is bounded by the number of crossings incident to $X_i$, and clearly $|\mu| \leq n$ as well.  Both $G$ and $\mu$ are easily computable in polynomial time, so it remains to be seen that the signature $\sign(G)$ is as well.

Since $G$ is symmetric and $\det(G) = \pm\det(K) \neq 0$ (see e.g.\ \cite[Corollary~9.5]{lickorish}), the eigenvalues of $G$ are all real and nonzero.  The entries of $G$ are all bounded by $n$ in magnitude, so we can compute the characteristic polynomial
\[ f(\lambda) = \det(\lambda I - G) = \lambda^k + a_{1} \lambda^{k-1} + \dots + a_{k-1}\lambda + a_k \]
in time $\poly(k,\log n) = \poly(n)$.  We let $k_+$ denote the number of sign changes in the sequence $a_0=1, a_1, \dots, a_k$, meaning the number of indices $i$ such that $a_ia_j < 0$ for some $j>i$ and all intermediate terms $a_{i+1},\dots,a_{j-1}$ are zero; likewise we let $k_-$ denote the number of sign changes in $(-1)^ka_0, (-1)^{k-1}a_1, (-1)^{k-2}a_2, \dots, a_k$.  Since all $k$ roots of $f(\lambda)$ are real and nonzero, and the total number of sign changes in both sequences is easily seen to be at most $k$, we have
\[  k = \#\{\mathrm{positive\ roots}\} + \#\{\mathrm{negative\ roots}\} \leq k_+ + k_- \leq k \]
where the first inequality is Descartes' rule of signs.  These inequalities must then be equalities, so  $f(\lambda)$ has exactly $k_\pm$ roots of either sign, and thus $\sign(G) = k_+ - k_-$.
\end{proof}

\begin{proposition} \label{prop:which-torus-knot}
Given a diagram $D$ for a knot $K$, there is an algorithm which returns the unique pair $(r,s)$ of coprime integers with $|r|>s \geq 2$ such that $\Delta_K(t) = \Delta_{T_{r,s}}(t)$ and $\sigma(K) = \sigma(T_{r,s})$ if such a pair exists, and the empty set otherwise.  If $D$ has $n$ crossings, then this algorithm runs in time $\poly(n)$.
\end{proposition}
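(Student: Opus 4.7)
The plan is to leverage Lemma \ref{lem:alexander-signature} together with the explicit formula for the Alexander polynomial of a torus knot. If $\Delta_K(t) = \Delta_{T_{r,s}}(t)$ with $r, s \geq 2$, then $\Delta_K$ is a symmetric Laurent polynomial with Laurent span (top exponent minus bottom exponent) equal to $(r-1)(s-1)$. I would begin by computing $\Delta_K(t)$ and $\sigma(K)$ using Proposition \ref{prop:compute-signature}, which takes $\poly(n)$ time, and setting $d$ equal to the Laurent span of $\Delta_K$. If $d$ is nonpositive or odd, or if $\sigma(K) = 0$, I return the empty set immediately, since no $T_{r,s}$ with $|r| > s \geq 2$ produces such data.

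If a matching pair exists then $(s-1) \mid d$ with $r = d/(s-1) + 1$, so I would enumerate the divisors $m$ of $d$ with $1 \leq m < \sqrt{d}$ and form candidate pairs $(r', s) = (d/m + 1,\, m+1)$, discarding any with $\gcd(r', s) \neq 1$. Since $2 \deg(\Delta_K) \leq \cross(K) \leq n$ we have $d \leq n$, so there are at most $O(n)$ candidate pairs. For each candidate I would compute
\[ P_{r',s}(t) = \frac{(t^{r's}-1)(t-1)}{(t^{r'}-1)(t^{s}-1)} \in \Z[t] \]
by polynomial long division of numerator by denominator; both have degree $O(n)$ and coefficients in $\{-1,0,1\}$, and Mignotte's bound (as used in the proof of Lemma \ref{lem:gcd-over-zt}) controls the intermediate coefficient bit-lengths to $\poly(n)$, so each division runs in $\poly(n)$ time. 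I would then check whether $t^{-d/2} P_{r',s}(t)$ equals $\Delta_K(t)$.

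If a candidate matches, I would determine the sign of $r$ exactly as in the proof of Lemma \ref{lem:alexander-signature}: Rudolph's theorem gives $\sigma(T_{r',s}) < 0$ for $r', s \geq 2$, so $(r, s) = (r', s)$ when $\sigma(K) < 0$ and $(r, s) = (-r', s)$ when $\sigma(K) > 0$. To verify the signature match, I would produce the standard $s$-braid closure diagram of $T_{r, s}$ with $(s-1)|r| = O(n)$ crossings (using its mirror for negative $r$), compute $\sigma(T_{r,s})$ via Proposition \ref{prop:compute-signature}, and compare with $\sigma(K)$. If some candidate passes both checks I return $(r,s)$; otherwise I return the empty set. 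Uniqueness is furnished by Lemma \ref{lem:alexander-signature}.

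The main obstacle is bookkeeping: ensuring that the polynomial division producing $P_{r',s}$, and the comparison with $\Delta_K$, stay within $\poly(n)$ time. This reduces to standard bounds on exact division of polynomials in $\Z[t]$ by monic polynomials of polynomial degree with small coefficients, so the algorithm runs in total time $\poly(n)$.
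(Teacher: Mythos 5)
Your proposal is correct and follows the same skeleton as the paper's proof: compute $\Delta_K$ and $\sigma(K)$ via Proposition~\ref{prop:compute-signature}, enumerate the candidate pairs $(r',s)$ compatible with the degree of $\Delta_K$ (there are in fact only $O(\sqrt{n})$ of them, as both arguments show), test the torus-knot Alexander polynomial identity for each, and resolve chirality using Rudolph's theorem on the sign of the signature, with uniqueness supplied by Lemma~\ref{lem:alexander-signature}. You differ in two implementation details. First, you verify $\Delta_K(t) = \Delta_{T_{r',s}}(t)$ by actually performing the exact division of $(t^{r's}-1)(t-1)$ by $(t^{r'}-1)(t^{s}-1)$, whereas the paper simply cross-multiplies and checks that $(t^{r'}-1)(t^{s}-1)\Delta_K(t)-(t^{r's}-1)(t-1)$ vanishes; the latter sidesteps the (minor, but real) bookkeeping about intermediate coefficient growth in long division that you flag at the end, and your appeal to Mignotte's bound is not quite the right tool for that step, though the division is indeed harmless since the divisor is monic with coefficients in $\{-1,0,1\}$. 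Second, to compute $\sigma(T_{r,s})$ you build the standard $(s-1)|r| = O(n)$-crossing braid-closure diagram and feed it back into Proposition~\ref{prop:compute-signature}, while the paper invokes Litherland's closed formula; your route is more self-contained, reusing a result already proved in the paper rather than citing an external signature formula, at the cost of a somewhat larger (but still polynomial) running time. Both approaches are valid.
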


\begin{proof}
We use $D$ to compute the Alexander polynomial $\Delta_K(t)$ and signature $\sigma(K)$ in $\poly(n)$ time, by Proposition~\ref{prop:compute-signature}.  If $\Delta_K(t) = \Delta_{T_{a,b}}(t)$ where $a>b\geq 2$, then $\Delta_K(t)$ has maximal degree $d = \frac{(a-1)(b-1)}{2}$.  Lemma~\ref{lem:crossing-number-pq} says that $ab < 3n$, so $b < \sqrt{3n}$.  Thus for each $b$ in the range $2 \leq b < \sqrt{3n}$, we set $a = \frac{2d}{b-1}+1$, and if this is an integer greater than $b$ then we compute
\[ (t^a-1)(t^b-1)\Delta_K(t) - (t^{ab}-1)(t-1). \]
If this is zero for some such pair $(a,b)$, then $\Delta_K(t) = \Delta_{T_{a,b}}(t)$; and if not, then $\Delta_K(t)$ is not the Alexander polynomial of a torus knot and so we return $\emptyset$.  This requires $O(\sqrt{n})$ operations on polynomials of degree at most $ab < 3n$, and hence can also be done in $\poly(n)$ time.

Next, assuming that $\Delta_K(t) = \Delta_{T_{a,b}}(t)$ as above, we compute $\sigma(T_{a,b})$ using Litherland's formula \cite{litherland}.  This requires only $O(ab)$ operations on integers of size at most $O(ab)$; we note that $O(ab)$ is in fact $O(n)$ if we reach this step, since $ab < 3n$.  If $\sigma(K) = \pm \sigma(T_{a,b})$ then we return $(r,s) = (\pm a,b)$, and otherwise we return $\emptyset$.
\end{proof}

\section{Normal surfaces and essential annuli} \label{sec:normal-surfaces}

Our recognition algorithms will rely heavily on being able to identify annuli and tori in knot complements which are essential, meaning that they are incompressible and boundary-incompressible.  In this section we review key results which will allow us to do so.  We first recall what it means to be a normal surface in a triangulated 3-manifold.

Let $\cT$ be a triangulation of a compact 3-manifold $Y$.  A properly embedded surface $S \subset Y$ is \emph{normal} if it intersects each tetrahedron of $\cT$ in a disjoint union of finitely many \emph{elementary disks}, which are properly embedded triangles or quadrilaterals.  There are seven types of elementary disks, one triangle per vertex and one quadrilateral per pair of opposite edges:
\[
\def\drawfronttriangle(#1)(#2)(#3){%
  \fill[fill=lightgray] (#1) -- (#2) -- (#3) -- cycle;
  \draw (#1) -- (#2) -- (#3);
  \draw[densely dotted] (#1) -- (#3);
}
\def\drawbacktriangle(#1)(#2)(#3){%
  \fill[fill=lightgray] (#1) -- (#2) -- (#3) -- cycle;
  \draw (#1) -- (#2);
  \draw[densely dotted] (#2) -- (#3) -- (#1);
}
\def\drawrectangle(#1)(#2)(#3)(#4){%
  \fill[fill=lightgray] (#1) -- (#2) -- (#3) -- (#4) -- cycle;
  \draw (#1) -- (#2) -- (#3);
  \draw[densely dotted] (#3) -- (#4) -- (#1);
}
\def\drawbackrectangle(#1)(#2)(#3)(#4){%
  \fill[fill=lightgray] (#1) -- (#2) -- (#3) -- (#4) -- cycle;
  \draw (#1) -- (#2);
  \draw[densely dotted] (#2) -- (#3);
  \draw (#3) -- (#4);
  \draw[densely dotted] (#4) -- (#1);
}
\def\drawtetrahedron(#1)(#2)(#3)(#4){%
  \draw (#3) -- (#1) -- (#2) -- (#3) -- (#4) -- (#2);
  \draw[dashed] (#1) -- (#4);
}
\begin{tikzpicture}
    \drawfronttriangle(-0.25,1.6875)(0,1.5)(0.25,1.6875);
    \drawfronttriangle(-0.25,0.1875)(0,0.375)(0.25,0.1875);
    \drawbacktriangle(-0.75,0.5625)(-0.75,1.0625)(-0.6,0.75);
    \drawbacktriangle(0.75,0.5625)(0.75,1.0625)(0.6,0.75);
    \drawtetrahedron(-1,0.75)(0,0)(0,2)(1,0.75);
\end{tikzpicture}
\qquad
\begin{tikzpicture}
   \drawrectangle(-0.5,0.375)(0,1.1)(0.5,1.375)(0.2,0.75);
   \drawtetrahedron(-1,0.75)(0,0)(0,2)(1,0.75);
\end{tikzpicture}
\qquad
\begin{tikzpicture}
   \drawrectangle(-0.5,1.375)(0,1)(0.5,0.375)(-0.2,0.75);
   \drawtetrahedron(-1,0.75)(0,0)(0,2)(1,0.75);
\end{tikzpicture}
\qquad
\begin{tikzpicture}
   \drawbackrectangle(-0.6,0.45)(-0.5,1.375)(0.5,1.375)(0.6,0.45);
   \drawtetrahedron(-1,0.75)(0,0)(0,2)(1,0.75);
\end{tikzpicture}
\]
If there are $t$ tetrahedra, then we can describe a normal surface $S$ by a vector $v(S) \in \Z^{7t}$ with nonnegative coordinates, counting the number of parallel copies of each type of elementary disk in each tetrahedron.  In fact, a vector $v \in \Z^{7t}$ describes an embedded surface if and only if its entries are nonnegative; it satisfies the \emph{matching equations}, linear equations asserting that when we glue tetrahedra together along a pair of faces, the edges of the elementary disks in each tetrahedron form the same pattern of line segments on either face; and at most one type of quadrilateral appears in each tetrahedron.

The set of all vectors in $\R^{7t}$ satisfying the nonnegativity and matching conditions is a polyhedral cone, called the \emph{Haken normal cone} of the triangulation.  The \emph{carrier} of a normal surface $S$ is the minimal face $\cC(S)$ of the Haken normal cone containing $v(S)$, and $S$ is said to be a \emph{vertex surface} if it is connected and two-sided and its carrier is a 1-dimensional face.  We say a vertex surface $S$ is \emph{minimal} if $v(S)$ is not a nontrivial integral multiple of another integral point on this face.  In this case there is a bound on its complexity, due to Hass, Lagarias, and Pippenger \cite{hass-lagarias-pippenger}.

\begin{theorem}[{\cite[Lemma~6.1]{hass-lagarias-pippenger}}] \label{thm:hlp-bound}
If $S$ is a minimal vertex surface with respect to a triangulation $\cT$ with $t$ tetrahedra, then each coordinate of $v(S) \in \Z^{7t}$ is at most $2^{7t-1}$.
\end{theorem}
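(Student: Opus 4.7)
The plan is to write $v(S)$ as the primitive integer point on an extreme ray of a polyhedral cone and then apply Cramer's rule, exploiting the fact that each matching equation is extremely sparse.

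First I would set up the Cramer's rule framework. Let $F = \cC(S)$, a $1$-dimensional face (ray) of the Haken normal cone. This face is cut out from $\R^{7t}$ by the matching equations $Mx=0$ together with a collection of equalities $x_i = 0$ for $i$ in some index set $I \subset \{1,\dots,7t\}$ of size $r$. Restricting to the complementary free coordinates, $F$ becomes the positive part of the one-dimensional kernel of a submatrix $M'$ of $M$ with $7t - r$ columns; for this kernel to be one-dimensional, one can choose $M'$ to have exactly $7t - r - 1$ linearly independent rows. By Cramer's rule, a primitive integer direction vector on this ray has its free coordinates equal, up to sign, to the $(7t-r-1) \times (7t-r-1)$ minors of $M'$. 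Since $S$ is a \emph{minimal} vertex surface, $v(S)$ is itself this primitive vector (up to divisibility considerations that only decrease the coordinates), so each nonzero coordinate of $v(S)$ is at most the absolute value of some such minor.

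Next I would pin down the structure of the matching equation matrix. For each interior $2$-face $f$ of $\cT$ and each of the three vertices of $f$, there is one matching equation requiring that the number of elementary disks cutting off that vertex from one side of $f$ equals the number doing so from the other side. On each side of $f$, exactly two elementary disks of the adjacent tetrahedron produce a given arc type: one triangle (the one at the relevant vertex) and one quadrilateral (the one whose separating edge pair has that vertex on one side). Thus each row of $M$ has exactly four nonzero entries, two equal to $+1$ and two equal to $-1$, so each row has $\ell^2$-norm equal to $2$. The rows of any submatrix of $M'$ inherit this, having $\ell^2$-norm at most $2$.

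I would then conclude by Hadamard's inequality: the absolute value of any $(7t - r - 1) \times (7t - r - 1)$ minor of $M'$ is at most
\[
\prod_{\text{rows}} \|\text{row}\|_2 \leq 2^{7t-r-1} \leq 2^{7t-1},
\]
which yields the claimed bound on every coordinate of $v(S)$. The main obstacle is establishing the structural claim about matching equations cleanly — namely that each row really has exactly four nonzero $\pm 1$ entries — because this requires careful bookkeeping of which elementary disks in a tetrahedron contribute which arc type to each of its faces (and a minor case analysis when $f$ is self-glued, in which case the two contributions from a single tetrahedron merge, giving a row with $\ell^2$-norm at most $2$ still). Once that combinatorial fact is in hand, the rest is routine linear algebra.
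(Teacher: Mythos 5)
The paper does not actually prove this statement; it is imported verbatim from Hass--Lagarias--Pippenger \cite[Lemma~6.1]{hass-lagarias-pippenger}. Your argument is correct and is essentially the proof given there: an extreme ray of the Haken cone is cut out by matching equations (each row having four entries equal to $\pm 1$, one triangle and one quadrilateral type on each side of a face, hence Euclidean norm $2$) together with coordinate hyperplanes, and Cramer's rule plus Hadamard's inequality bounds the coordinates of the primitive integer point on that ray by $2^{7t-1}$.
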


The key fact we use to detect torus knots, cabled knots, and composite knots, proved originally by Simon \cite{simon}, is that a knot $K$ lies in one of these classes if and only if its exterior $E_K = S^3 \ssm N(K)$ contains a properly embedded essential annulus.  The annuli in question split $E_K$ into two connected complements as follows:
\begin{itemize}
\item If $K$ is a torus knot, then the complement is a pair of solid tori.
\item If $K$ is a cable, the complement is a solid torus and a nontrivial knot complement.
\item If $K$ is a composite knot, the complement is a pair of nontrivial knot complements.
\end{itemize}
By \cite[Lemma~15.26]{burde-zieschang-book}, if $K$ is a torus knot or a cable then every essential annulus is a cabling annulus: this means that there is a knot $K' \subset S^3$ such that $K$ is a nonseparating curve in the torus $T = \partial N(K')$ (note that $K'$ is unknotted if $K$ is a torus knot), and the annulus is then the intersection $T \cap E_K$.  Such annuli decompose $E_K$ into the exterior $E_{K'}$ and the solid torus $N(K')$.  If instead $K$ is a connected sum, then every essential annulus has meridional boundary slope and thus decomposes $E_K$ into a pair of nontrivial knot complements.

Essential annuli in the exterior of $K$ are always isotopic to normal surfaces in any given triangulation of $E_K$, by the following theorem; we take the statement from Jaco and Rubinstein \cite[Theorem~2.4]{jaco-rubinstein}, who attribute it to Haken \cite{haken-results}.

\begin{theorem} \label{thm:isotopic-to-normal}
Let $S$ be a properly embedded, essential surface in an irreducible 3-manifold $Y$ with incompressible boundary.  If $\cT$ is any triangulation of $Y$ and $\cT^{(1)}$ its 1-skeleton, then $S$ is isotopic to a normal surface $S'$ of \emph{least weight}, meaning that $\#(S' \cap \cT^{(1)})$ is minimized among normal surfaces in its isotopy class.
\end{theorem}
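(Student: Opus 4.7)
The plan is to start with $S$ in general position with respect to $\cT$ and then perform a finite sequence of ambient isotopies of $Y$ that successively reduce a lexicographic complexity whose last coordinate is the weight $w(S) := \#(S \cap \cT^{(1)})$, until $S$ is normal. Since the weight takes nonnegative integer values, the minimum of $w$ over general-position representatives of the isotopy class of $S$ is attained; the content of the theorem is that such a minimizer is automatically in normal form. I would therefore fix a minimum-complexity representative (still called $S$) and show that every local deviation from normality allows a further simplifying isotopy, contradicting minimality.

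The first obstructions to remove live on the $2$-skeleton. If $S$ meets some face $F$ in a closed curve, an innermost such curve bounds a disk $D \subset F$ with interior disjoint from $S$; incompressibility of $S$ forces this curve to also bound a disk $D' \subset S$, and irreducibility of $Y$ then shows that the sphere $D \cup D'$ bounds a ball, across which I can isotope $D'$ to a pushoff of $D$. Since $D$ lies in the interior of a face and hence is disjoint from $\cT^{(1)}$, this strictly simplifies $S$. Likewise, if $S \cap F$ contains an arc with both endpoints on the same edge $e$ (a ``return''), an outermost such arc cuts off a disk $E \subset F$; boundary-incompressibility of $S$, together with the incompressibility of $\partial Y$ when $F \subset \partial Y$, and irreducibility of $Y$, let me use $E$ to guide an isotopy pushing the arc across $e$ and strictly decreasing $w(S)$. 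After these reductions, each face meets $S$ in a disjoint union of normal arcs, so that $S \cap \Delta$ for each tetrahedron $\Delta$ is a disjoint union of compact surfaces bounded by normal arcs on $\partial\Delta$.

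Next I would show that every component of $S \cap \Delta$ is an elementary disk. The same compression argument, applied now inside the ball $\Delta$ and using the irreducibility of $Y$ together with the incompressibility of $S$, shows that a closed component or positive-genus piece can be replaced by disks while decreasing complexity; so every component is a disk whose boundary is a circle of normal arcs on the sphere $\partial\Delta$. A combinatorial case analysis of such circles, organized by the cyclic sequence of faces visited and using the fact that a normal arc joins exactly two of the four faces of $\Delta$, shows that the only disks whose boundary uses at most four arcs are the four triangles and three quadrilateral types. For a disk whose boundary uses more than four arcs -- an ``octagon'' or heavier piece -- one locates a compressing or boundary-compressing arc inside $\Delta$ which splits it into simpler elementary disks and strictly decreases the weight.

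The main obstacle is this last step. The case analysis of arc patterns on $\partial\Delta$ has to be carried out carefully, and one must check that each local combinatorial simplification is realized by a genuine ambient isotopy of $Y$ that preserves $S$ up to isotopy, rather than merely by an abstract surgery; this is the heart of Haken's classical normalization procedure. Once all non-elementary disks have been removed, the resulting surface is normal and isotopic to $S$, and selecting a representative of minimum weight among the normal surfaces in this isotopy class yields the $S'$ claimed by the theorem.
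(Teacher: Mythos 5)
The paper does not actually prove this statement: it is quoted directly from Jaco and Rubinstein (their Theorem~2.4), who in turn attribute it to Haken, so there is no in-paper argument to compare yours against. What you have written is an outline of the classical normalization procedure that the authors deliberately treat as a black box. Your outline identifies the right moves --- kill circles of $S \cap \cT^{(2)}$ by innermost-disk swaps using incompressibility and irreducibility, kill return arcs by isotopies across outermost disks, then argue that every component of $S \cap \Delta$ is an elementary disk --- and you correctly flag that the last step is where the work lies. But as written the proposal is a plan rather than a proof: eliminating non-disk components and ``octagons'' via edge compressions, and verifying that each such move is realized by an ambient isotopy and strictly decreases the chosen complexity, is precisely the content of Haken's theorem, and you have deferred it rather than carried it out.

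There is also a concrete error in your setup. You take a lexicographic complexity ``whose last coordinate is the weight''; it must be the other way around, with the weight $\#(S \cap \cT^{(1)})$ as the \emph{primary} coordinate and the number of components of $S \cap \cT^{(2)}$ secondary. The return-arc move and the edge compression that removes an octagon each reduce the weight by two but can merge or create intersection curves in the adjacent faces, so they are not monotone for any ordering that prioritizes a curve count over the weight; the innermost-circle swap, by contrast, preserves the weight and only reduces the number of curves. With the weight placed last, your descent argument stalls. Relatedly, saying that an edge compression ``splits [an octagon] into simpler elementary disks and strictly decreases the weight'' conflates two separate steps: the move is an isotopy deleting two points of $S \cap e$ for some edge $e$, after which one still needs the combinatorial fact (pigeonhole on the six edges of $\Delta$) that a normal disk whose boundary meets each edge at most once is one of the seven elementary types. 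If you want a complete argument rather than the citation the paper relies on, these details are carried out in Jaco--Oertel and in Matveev's book on algorithmic topology.
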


Theorem~\ref{thm:isotopic-to-normal} is true for arbitrary essential surfaces, but for annuli we know significantly more: by work of Jaco and Tollefson \cite{jaco-tollefson}, we can find essential annuli as above which are not just normal but vertex surfaces.  The following result holds for annuli and tori in any orientable, compact, irreducible 3-manifold with incompressible boundary, though we only need it for annuli in knot complements.

\begin{theorem}[{\cite[Corollary~6.8]{jaco-tollefson}}] \label{thm:annulus-carrier}
Let $A$ be a normal, two-sided, essential annulus of least weight in the exterior $E_K$ of a nontrivial knot $K \subset S^3$.  Then every vertex surface in the carrier of $A$ is either an essential annulus or an essential torus.
\end{theorem}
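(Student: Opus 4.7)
The plan is to exploit the Haken sum structure of the carrier $\cC(A)$ together with an Euler characteristic count. Let $S_1,\dots,S_m$ be the minimal vertex surfaces spanning the extreme rays of $\cC(A)$; then $v(A) = \sum n_i v(S_i)$ for some positive integers $n_i$, and any vertex surface in $\cC(A)$ must, by definition, be connected and lie on an extreme ray of $\cC(A)$, so it must coincide with one of the $S_i$. It therefore suffices to show that each $S_i$ is either an essential annulus or an essential torus.

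The first key step is to prove that every summand $S_i$ is essential. Since $K\subset S^3$ is nontrivial, $E_K$ is irreducible and $\partial E_K$ is incompressible, so I only need to rule out compressing and boundary-compressing disks for each $S_i$. My approach would be an exchange-and-reduction argument: suppose for contradiction that some $S_i$ admits a compressing disk $D$. Put $D$ in general position with the other summands and with the 1-skeleton $\cT^{(1)}$, and then modify the regular exchange pattern used to reassemble $A=\sum n_i S_i$ in a neighborhood of $D$. The modification replaces a piece of $S_i$ running through $D$ by the compressed piece, producing a normal surface $A'$ which is isotopic to $A$ but satisfies $\#(A'\cap \cT^{(1)}) < \#(A\cap \cT^{(1)})$, contradicting the least-weight hypothesis. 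An analogous argument, working with an outermost arc of the intersection of a boundary-compressing disk with $S_i$, rules out boundary-compressions.

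With essentiality established, the topology of the $S_i$ is forced by additivity of Euler characteristic under Haken sum:
\[ 0 = \chi(A) = \sum_{i=1}^m n_i \chi(S_i). \]
Each $S_i$ is connected and two-sided inside the orientable manifold $E_K$, hence orientable, and essentiality rules out spheres and disks, so $\chi(S_i)\le 0$ for every $i$. With each $n_i>0$, the sum can vanish only if $\chi(S_i)=0$ for every $i$. An orientable surface of zero Euler characteristic with nonnegative boundary components is either a closed torus or an annulus, so combined with essentiality each $S_i$ is exactly of the stated form.

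The main obstacle I foresee is the exchange argument in the second paragraph. Converting a compressing disk for a summand $S_i$ into an honest weight reduction for the assembled surface $A$ requires careful control over how Haken sums resolve along their intersection curves, over how the boundary of $D$ passes between summands, and over how to avoid re-introducing weight elsewhere when patching. This is precisely the combinatorial surgery developed in detail by Jaco and Tollefson \cite{jaco-tollefson} via \emph{patch disks} and their length-minimization lemmas; my sketch presumes that their machinery can be applied directly in the knot exterior setting with only minor bookkeeping about boundary slopes on $\partial E_K$.
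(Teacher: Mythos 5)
The paper does not actually prove this statement: it is imported verbatim as \cite[Corollary~6.8]{jaco-tollefson}, so there is no internal argument to compare yours against. Your sketch reconstructs the strategy that Jaco--Oertel and Jaco--Tollefson themselves use (decompose $v(A)$ as a Haken sum over the extreme rays of the carrier, show the summands inherit essentiality, then apply additivity of Euler characteristic), so the architecture is the right one. But there is a genuine gap exactly where you flag one: the assertion that a compressing or boundary-compressing disk for a summand $S_i$ can be converted into a weight reduction for the assembled surface $A$ \emph{is} the entire technical content of the theorem, and the one-sentence exchange argument does not work as stated. A compressing disk $D$ for $S_i$ has no a priori relation to $A$: after the regular exchanges resolving the double curves of $\sum n_i S_i$, the disk $D$ may meet the other summands and the resolved surface in an arbitrary pattern, and ``replacing a piece of $S_i$ running through $D$ by the compressed piece'' does not yield a surface isotopic to $A$, let alone a normal one of smaller weight. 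The actual proof runs in the opposite direction, using the incompressibility and least weight of $A$ itself, together with the patch and completion machinery of \cite{jaco-tollefson}, to control the summands; since you ultimately defer to that machinery, the proposal reduces to citing the same corollary the paper cites.

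Two secondary points in your final paragraph also need repair. First, the summands on the extreme rays need not be two-sided: one-sided minimal surfaces (M\"obius bands, and their doubles) genuinely occur, and the paper itself accounts for this in the proof of Proposition~\ref{prop:small-normal-annuli}, where a connected vertex surface is ``either minimal or twice a minimal surface (which is then nonorientable, hence a M\"obius band).'' So a vertex surface in the carrier is not always literally one of your $S_i$, and the step ``two-sided in an orientable manifold, hence orientable'' is unavailable for every summand; you must separately handle non-orientable summands with $\chi=0$ and their doubles. Second, ``essentiality rules out spheres and disks'' is circular at the point where you invoke it, since establishing essentiality of the summands is what the second paragraph was supposed to accomplish; excluding normal sphere and disk summands uses irreducibility of $E_K$ and incompressibility of $\partial E_K$ together with a further least-weight exchange, not a property of the $S_i$ you have already secured.
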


We combine all of these results to guarantee the existence of essential annuli with bounded weight in the complements of torus knots, cables, and connected sums of knots.

\begin{proposition} \label{prop:small-normal-annuli}
Let $K \subset S^3$ be a nontrivial knot, and let $\cT$ be a triangulation of its exterior $E_K = S^3 \ssm N(K)$, with $t$ tetrahedra.  If $K$ is a torus knot, cable knot, or composite knot, then there is an essential annulus $A \subset E_K$ which is normal with respect to $\cT$ of total weight
\[ ||v(A)||_{L^1} \leq 7t \cdot 2^{7t}. \]
The complement $E_K \ssm N(A)$ has two connected components, which are either a pair of solid tori, a solid torus and a nontrivial knot complement, or a pair of nontrivial knot complements depending on whether $K$ is a torus, a cable knot, or a composite knot respectively.
\end{proposition}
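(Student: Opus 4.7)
The plan is to produce the essential annulus by combining the three cited results with the standard Jaco--Tollefson trick of passing from a least-weight normal representative to a vertex surface in its carrier.

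First, since $K$ is a torus knot, cable knot, or composite knot, the result of Simon recalled above guarantees the existence of \emph{some} essential annulus $A_0 \subset E_K$. Because $K$ is nontrivial, $E_K$ is irreducible with incompressible boundary, so Theorem~\ref{thm:isotopic-to-normal} lets us isotope $A_0$ to a normal essential annulus $A_1$ of least weight with respect to $\cT$. Theorem~\ref{thm:annulus-carrier} then tells us that every vertex surface in the carrier $\cC(A_1)$ is an essential annulus or essential torus. Writing $v(A_1)$ as a positive rational combination of the (finitely many) vertex surface rays spanning $\cC(A_1)$, we observe that tori have empty boundary while $\partial A_1 \neq \emptyset$, so at least one vertex surface in this combination must itself have nonempty boundary. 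Such a vertex surface is therefore an essential annulus; call it $A$ (after rescaling so that $v(A)$ is a minimal integral vector on its ray, i.e.\ $A$ is minimal).

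Now apply Theorem~\ref{thm:hlp-bound} to the minimal vertex surface $A$: each of the $7t$ coordinates of $v(A) \in \Z^{7t}$ is bounded above by $2^{7t-1}$, so
\[ \|v(A)\|_{L^1} \leq 7t \cdot 2^{7t-1} \leq 7t \cdot 2^{7t}, \]
which is the claimed weight bound.

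Finally, to identify $E_K \ssm N(A)$, I invoke the classification of essential annuli in the relevant knot exteriors recalled just before Theorem~\ref{thm:isotopic-to-normal}. If $K$ is a torus knot or a cable, then by \cite[Lemma~15.26]{burde-zieschang-book} the essential annulus $A$ is a cabling annulus associated to some knot $K'$, so $A$ splits $E_K$ into $N(K')$ and $E_{K'}$; here $K'$ is the unknot when $K$ is a torus knot (giving two solid tori) and $K'$ is a nontrivial knot when $K$ is a cable (giving one solid torus and one nontrivial knot complement). If instead $K$ is composite, every essential annulus has meridional boundary slope, so $A$ realizes the connected sum decomposition of $K$ and splits $E_K$ into the two summand knot complements, both of which are nontrivial. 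This completes the verification of the list of complement types, and the proposition follows. The main subtlety, and the step I would be most careful about, is the passage from the least-weight normal annulus $A_1$ to a vertex annulus $A$ in $\cC(A_1)$: one must rule out the possibility that every vertex surface in the carrier is a torus, which is exactly what the boundary obstruction $\partial A_1 \neq \emptyset$ accomplishes.
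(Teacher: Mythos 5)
Your proposal follows the paper's proof almost line for line: Simon's theorem supplies an essential annulus, Theorem~\ref{thm:isotopic-to-normal} replaces it by a least-weight normal representative, Theorem~\ref{thm:annulus-carrier} together with the nonempty-boundary observation extracts a vertex annulus from the carrier, Theorem~\ref{thm:hlp-bound} bounds its coordinates, and \cite[Lemma~15.26]{burde-zieschang-book} identifies the two complementary pieces. The identification of the complement and the carrier argument are both handled correctly.

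The one step that does not work as written is the parenthetical ``after rescaling so that $v(A)$ is a minimal integral vector on its ray, i.e.\ $A$ is minimal.'' A vertex surface is by definition connected and \emph{two-sided}, and its vector need not be the minimal integral point on the $1$-dimensional face: that minimal point may correspond to a one-sided normal surface (a M\"obius band) $S$, in which case the two-sided vertex annulus satisfies $v(A) = 2v(S)$ and cannot be ``rescaled'' to $S$ while remaining an annulus. So you cannot simply declare $A$ minimal and apply Theorem~\ref{thm:hlp-bound} to it directly. The paper addresses exactly this dichotomy: $A$ is either minimal or twice a minimal (nonorientable) surface, so each coordinate of $v(A)$ is at most $2 \cdot 2^{7t-1} = 2^{7t}$ rather than your claimed $2^{7t-1}$. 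Since the proposition only asserts $||v(A)||_{L^1} \leq 7t \cdot 2^{7t}$, the final bound survives; this is a local fix rather than a structural problem, but the minimality claim as you stated it is false in general and the intermediate bound $7t\cdot 2^{7t-1}$ is not justified.
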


\begin{proof}
By hypothesis $E_K$ contains an essential annulus, which is isotopic to a normal annulus $A_0$ of least weight by Theorem~\ref{thm:isotopic-to-normal}.  Theorem~\ref{thm:annulus-carrier} says that every vertex surface in $\cC(A_0)$ is an essential annulus or an essential torus, and since $v(A_0)$ is a linear combination of the normal coordinates of these vertex surfaces, they cannot all be tori or else $A_0$ would not have nonempty boundary in $\partial E_K$.

We conclude that there is an essential normal annulus $A$ which is a vertex surface, and since it is connected it must be either minimal or twice a minimal surface (which is then nonorientable, hence a M\"obius band).  The bound of Theorem~\ref{thm:hlp-bound} says that each of the $7t$ coordinates of $v(A)$ is then at most $2^{7t}$, so that $||v(A)||_{L^1} \leq 7t \cdot 2^{7t}$ as claimed.  Finally, the description of $E_K \ssm N(A)$ follows from \cite[Lemma~15.26]{burde-zieschang-book} as described above.
\end{proof}

Given normal surface coordinates for a properly embedded annulus $A$, the following lemma will help us verify that $A$ is incompressible.

\begin{lemma} \label{lem:annulus-pi1}
Let $A$ be a properly embedded annulus in the exterior $E_K = S^3 \ssm N(K)$ of a nontrivial knot.  The map $\pi_1(A) \to \pi_1(E_K)$ is injective if and only if each component of $\partial A$ represents a nontrivial class in $H_1(\partial E_K)$.
\end{lemma}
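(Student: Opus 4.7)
The plan is to reduce the injectivity question to whether a single boundary circle of $A$ is nontrivial in $\pi_1(\partial E_K)$, and then invoke incompressibility of the boundary torus. Since $A$ is an annulus, $\pi_1(A) \cong \Z$ is generated by a core circle, and this core is freely homotopic in $A$ to each of the two components of $\partial A$. Consequently the induced map $\pi_1(A) \to \pi_1(E_K)$ is injective if and only if the image of a boundary circle $c \subset \partial E_K$ has infinite order in $\pi_1(E_K)$. Moreover, the two components of $\partial A$ are freely homotopic through $A$, so they represent conjugate elements of $\pi_1(E_K)$ and hence are simultaneously trivial or nontrivial; this is why the ``each'' in the statement is the same as ``some''.

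Next I would use that, since $K$ is nontrivial, the torus $\partial E_K$ is incompressible in $E_K$ (a classical consequence of the loop theorem applied to a nontrivial knot exterior). Thus the inclusion induces an injection $\pi_1(\partial E_K) \hookrightarrow \pi_1(E_K)$, and since $\pi_1(\partial E_K) \cong \Z^2$ is already abelian it coincides with $H_1(\partial E_K)$; in particular a simple closed curve on $\partial E_K$ is nontrivial in $\pi_1(\partial E_K)$ if and only if it is nontrivial in $H_1(\partial E_K)$. I would also invoke the standard fact that knot groups are torsion-free (for instance because $E_K$ is aspherical whenever $K$ is nontrivial, by Papakyriakopoulos), so that every nontrivial element of $\pi_1(E_K)$ automatically has infinite order.

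Combining these ingredients finishes both directions: a boundary circle $c$ of $A$ is nonzero in $H_1(\partial E_K)$ if and only if it is nontrivial in $\pi_1(\partial E_K)$, if and only if (by incompressibility) it is nontrivial in $\pi_1(E_K)$, if and only if (by torsion-freeness) its image in $\pi_1(E_K)$ has infinite order, which by the first paragraph is equivalent to injectivity of $\pi_1(A) \to \pi_1(E_K)$. I do not anticipate a serious obstacle; the only mild subtlety is to note that for curves on a torus, triviality in $\pi_1$ and in $H_1$ coincide (which is automatic because $\pi_1(T^2)$ is abelian), so that the $H_1$ hypothesis actually feeds into the incompressibility argument.
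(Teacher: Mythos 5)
Your proposal is correct and follows essentially the same route as the paper: both arguments reduce injectivity to the behavior of a single boundary circle and rest on the incompressibility of $\partial E_K$ (valid because $K$ is nontrivial). The only cosmetic difference is that you upgrade ``nontrivial in $\pi_1(E_K)$'' to ``infinite order'' via torsion-freeness of the knot group (asphericity of $E_K$), whereas the paper factors $\pi_1(A)\to\pi_1(E_K)$ through $\pi_1(\partial E_K)\cong\Z^2$ and only needs that $\Z^2$ is torsion-free, a marginally lighter tool.
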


\begin{proof}
We note that the two components of $\partial A$ represent the same class in $\pi_1(E_K)$, since $A$ provides a homotopy between them.  The map $\pi_1(\partial E_K) \to \pi_1(E_K)$ is injective since $K$ is nontrivial, so the components must then also be identical in $\pi_1(\partial E_K) \cong H_1(\partial E_K)$.

Let $\gamma$ be a component of $\partial A$.  Then $\gamma$ generates $\pi_1(A) \cong \Z$, so every class in $\pi_1(A)$ is represented by a curve $\gamma^n$ in $\partial E_K$, and thus the map $\pi_1(A) \to \pi_1(E_K)$ factors as $\langle\gamma\rangle \to \pi_1(\partial E_K) \to \pi_1(E_K)$.  Since the map $\pi_1(\partial E_K) \to \pi_1(E_K)$ is injective,  we see that $\pi_1(A) \to \pi_1(E_K)$ is injective if and only if $\langle\gamma\rangle \to \pi_1(\partial E_K)$ is as well.  But $\pi_1(\partial E_K) \cong H_1(\partial E_K)$ is torsion-free, so the latter map is injective if and only if $[\gamma]$ is a nonzero class in $H_1(\partial E_K)$.
\end{proof}

We explain how to check that the criteria of Lemma~\ref{lem:annulus-pi1} are satisfied.  If $E_K$ has a triangulation $\cT$ restricting to a one-vertex triangulation of the boundary torus, and $A$ is a properly embedded normal annulus $A$ in $E_K$, then $\partial A$ is naturally described as a normal curve in the surface $\partial E_K$ with respect to $\cT|_{\partial E_K}$.  Normal curves in this triangulation were studied by Jaco and Sedgwick \cite[Theorem~3.6]{jaco-sedgwick}; they are described by three nonnegative integers per triangle, counting the number of arcs around each vertex.
\[
\begin{tikzpicture}
  \draw[ultra thick] (0,0) rectangle (2,2);
  \draw[ultra thick] (0,0) -- (2,2);
  \foreach \x in {0,2} {
      \draw[very thick] ($(\x-0.1,1)$) -- ($(\x+0.1,1)$);
  }
  \foreach \x in {0.95,1.05} {
      \foreach \y in {0,2} {
          \draw[very thick] ($(\x,\y-0.1)$) -- ($(\x,\y+0.1)$);
      }
  }
  \foreach \x in {0,1,2} {
      \draw ($(0,1.7-0.2*\x)$) -- ($(0.3+0.2*\x,2)$);
      \draw ($(1.7-0.2*\x,0)$) -- ($(2,0.3+0.2*\x)$);
  }
  \foreach \x in {0.5,0.65} {
      \draw (\x,\x) -- (\x,0);
      \draw ($(2-\x,2-\x)$) -- ($(2-\x,2)$);
   }
  \draw (0,0.3) -- (0.3,0.3);
  \draw (1.7,1.7) -- (2,1.7);
  \node at (0.45,1.55) {$a$};
  \node at (0.15, 0.5) {$b$};
  \node at (1.2, 1.675) {$c$};
  \node at (1.5, 0.5) {$d$};
  \node at (1.85, 1.5) {$e$};
  \node at (0.8, 0.325) {$f$};
\end{tikzpicture}
\]
A normal curve satisfies one matching equation per edge of the triangulation, namely
\[ a+b=d+e, \qquad a+c=d+f, \qquad b+c=e+f. \]
These are equivalent to $a=d$, $b=e$, and $c=f$, so the curve is uniquely characterized by the triple $(a,b,c)$.  Two curves in this triangulation are normally isotopic if and only if they are isotopic \cite[Lemma~3.5]{jaco-sedgwick}, so if $A$ is a normal annulus with boundary $\gamma_1 \cup \gamma_2$, then $\gamma_1$ is isotopic to $\gamma_2$ and so $\partial A$ has normal coordinates of the form $(2a,2b,2c)$.  If the $\gamma_i$ are homologically nontrivial, then they each represent a primitive class since they are embedded, and hence each has odd intersection with some element of a basis of $H_1(\partial E_K;\Z)$.  In particular, they are homologically nontrivial if and only if $\partial A$ has normal coordinates $(2a,2b,2c)$ with at least one of $a+b$ and $a+c$ being odd.

\section{Torus knot recognition is in NP} \label{sec:torus-knot-np}

In this section we show how to certify that a knot diagram $D$ represents a torus knot.  We do not declare which torus knot it is, but this is unnecessary, since Proposition~\ref{prop:which-torus-knot} allows us to determine this in polynomial time.  Our certification relies on the fact that a knot $K \subset S^3$ is a torus knot if and only if there is an essential annulus $A$ in the exterior of $K$ whose complement is a pair of solid tori; the annulus is the intersection of the exterior with a Heegaard torus on which $K$ lies.

Our certification is a straightforward combination of algorithms by others.  After triangulating the knot complement, we present the annulus $A$ as a normal surface with bounded weight, using Proposition~\ref{prop:small-normal-annuli}; then we cut the knot complement along $A$ and triangulate the remaining pieces using work of Lackenby \cite{lackenby-knottedness}.  Finally, we certify that $A$ is essential and that the remaining pieces are indeed solid tori using an algorithm of Ivanov \cite{ivanov}, and this verifies that we have a torus knot.

We now begin to construct the certificate in detail, by producing a small triangulation of the knot exterior $E_K = S^3 \ssm N(K)$.  We assume that every knot diagram has at least three crossings, since otherwise it is clearly a diagram of the unknot.

\begin{proposition} \label{prop:triangulate-exterior}
Let $D$ be a diagram with $n$ crossings of a knot $K$.  There is a universal constant $C$ and an algorithm which produces in $\poly(n)$ time a triangulation $\cT$ of the exterior $E_K$, such that $\cT$ has at most $Cn$ tetrahedra and its restriction to the boundary torus $\partial E_K$ consists of two triangles and a single vertex.
\end{proposition}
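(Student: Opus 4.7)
The plan is to combine three standard steps: first build a triangulation $\cT_0$ of $S^3$ in which $K$ appears as a subcomplex of the $1$-skeleton with $O(n)$ tetrahedra; next drill out a regular neighborhood of $K$ to obtain a triangulation of $E_K$; and finally modify the induced triangulation of $\partial E_K$ to become the minimal one-vertex, two-triangle triangulation of the torus, while keeping the tetrahedron count linear in $n$ throughout.

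The first two steps are essentially carried out in \cite{hass-lagarias-pippenger}. Concretely, around each crossing of $D$ I would place a small closed cube $B_c \subset S^3$ containing the two strands of $K$ as a pair of standard disjoint arcs, and connect these cubes by rectangular ``beams'' running along the arcs of $K$ between consecutive crossings. Together with the closures of the complementary regions of $S^3$ (which admit polyhedral decompositions of bounded local complexity), this gives a cell decomposition of $S^3$ with $O(n)$ cells in which $K$ sits inside the $1$-skeleton. Triangulating each cell compatibly, e.g.\ by coning from a central vertex, produces $\cT_0$. Taking the simplicial star of $K$ inside a barycentric subdivision of $\cT_0$ and discarding its interior then yields a triangulated exterior with $O(n)$ tetrahedra, whose induced triangulation of $\partial E_K$ has $O(n)$ triangles and $O(n)$ vertices.

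To reduce this boundary triangulation to the one with two triangles and a single vertex, it suffices to reduce the number of boundary vertices to one, since any one-vertex triangulation of $T^2$ automatically has exactly two triangles by Euler characteristic. I would accomplish this by a sequence of local moves, each implemented in the bulk by ``layering'' a single tetrahedron onto $\partial E_K$. A diagonal flip of two adjacent boundary triangles sharing an edge is realized by gluing a tetrahedron along those two faces; its other two faces then become the new boundary, with the shared edge replaced by its diagonal. A degree-$3$ boundary vertex $v$ can be removed by gluing a single tetrahedron along the three boundary triangles meeting at $v$, pushing $v$ into the interior and exposing the fourth face. Combining diagonal flips (to produce degree-$3$ vertices) with vertex removal reduces any triangulation of $T^2$ with $k$ triangles to the minimal one in $O(k)$ steps, adding $O(k) = O(n)$ new tetrahedra and running in $\poly(n)$ time overall.

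The main obstacle is the combinatorial content of the last step: giving an explicit, polynomial-time procedure that reduces any triangulation of $T^2$ to the minimal one using only moves that extend to interior layerings, with the total number of moves bounded linearly in the starting complexity. This is classical and is essentially the content of the theory of layered triangulations with torus boundary developed by Jaco and Rubinstein, but writing the bookkeeping carefully (so that diagonal flips reliably produce a degree-$3$ vertex after a linear number of steps) takes some care. Alternatively, one could bypass this altogether by appealing directly to Lackenby's constructions in \cite{lackenby-knottedness}, which are already being used elsewhere in the paper and produce a triangulation of $E_K$ with precisely the required boundary structure.
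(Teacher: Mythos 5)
Your proposal is correct and follows essentially the same route as the paper: the cube-and-beam construction of a triangulated $S^3$ containing $K$ in its $1$-skeleton and the subsequent barycentric drilling are exactly Lemmas~7.1 and~7.2 of Hass--Lagarias--Pippenger, and the boundary simplification by layering tetrahedra to reach a one-vertex triangulation of $\partial E_K$ is precisely what Lackenby's Proposition~10.3 in \cite{lackenby-knottedness} accomplishes --- the paper simply cites these three results rather than reproving them. In particular, your suggested fallback of appealing directly to Lackenby for the last step is exactly what the paper does, which sidesteps the bookkeeping you identify as the main obstacle.
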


\begin{proof}
Hass, Lagarias, and Pippenger \cite[Lemma~7.1]{hass-lagarias-pippenger} show how to triangulate $S^3$ with at most $440n+2$ tetrahedra in $O(n\log n)$ time so that $K$ lies in the 1-skeleton, and then in \cite[Lemma~7.2]{hass-lagarias-pippenger} they apply two baryocentric subdivisions and remove a neighborhood of $K$ to get a triangulation $\cT_0$ of $E_K$ with $t \leq 24^2(440n+2)$ tetrahedra.  The restriction of $\cT_0$ to $\partial E_K$ may be complicated, but following an algorithm of Lackenby \cite[Proposition~10.3]{lackenby-knottedness} we can attach at most $4t$ tetrahedra to $\partial E_K$ in $\poly(t) = \poly(n)$ time to reduce the number of boundary vertices to one.  The resulting triangulation $\cT$ of $E_K$ has at most $5t \leq 5\cdot 24^2(440n+2)$ tetrahedra, so we take $C=5\cdot 24^2 \cdot 441$.
\end{proof}

Supposing that $\cT$ has $t$ tetrahedra, Proposition~\ref{prop:small-normal-annuli} provides an essential annulus $A \subset E_K$ which is normal of weight $||v(A)||_{L^1} \leq 7t \cdot 2^{7t}$, and which splits $E_K$ into a pair of solid tori.  In order to triangulate the complement of $A$ in $E_K$, we use the following proposition which is due to, but not explicitly stated by, Lackenby \cite{lackenby-knottedness}.

\begin{proposition} \label{prop:annulus-triangulation}
Let $M$ be a compact, irreducible, orientable 3-manifold with incompressible boundary, and let $\cT$ be a triangulation of $M$ with $t$ tetrahedra.  If $A$ is a properly embedded normal annulus in $M$ with weight $w = ||v(A)||_{L^1}$, then there is an algorithm which builds a triangulation of $M' = M \ssm N(A)$ with at most $200t$ tetrahedra in $\poly(t\log(w))$ time.
\end{proposition}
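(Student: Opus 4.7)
The plan is to unpack how the cutting-along-$A$ construction in \cite{lackenby-knottedness} leads to a small triangulation of $M \ssm N(A)$, and to verify the two quantitative claims (at most $200t$ tetrahedra, runtime $\poly(t\log w)$) separately.

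First I would analyze the situation inside a single tetrahedron $\Delta$ of $\cT$. The intersection $A \cap \Delta$ is a disjoint union of elementary disks, with at most four triangle types and a single quadrilateral type appearing, say with multiplicities $n_1,\dots,n_5 \leq w$. These disks are all pairwise parallel within their type, and the complement $\Delta \ssm (A\cap\Delta)$ has a standard product-region decomposition: near each vertex of $\Delta$ there is a corner region, between consecutive parallel disks of the same type there is a prism $D \times I$, and in the ``middle'' of $\Delta$ there is a bounded combinatorial number of small pieces (at most a universal constant). Crucially, although $n_i$ may be exponentially large, there are only $O(1)$ combinatorial types of complementary regions per tetrahedron.

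Next I would perform the standard collapse: each prism region $D \times I$ between two parallel disks of the same type is a product, and cutting $\Delta$ open along the disks yields, on each side of $A$, parallel copies of $D \times I$ that can be identified to a single $D$ without changing the homeomorphism type of $M \ssm N(A)$. The content of \cite[Sections~4--5]{lackenby-knottedness} (the construction following the introduction of ``parallelity bundles'') is that after this collapse each tetrahedron contributes only $O(1)$ cells, each of which is a polyhedral ball with a bounded number of faces. Triangulating these cells (and gluing along faces in a compatible way) produces a triangulation of $M \ssm N(A)$; a direct counting argument, bounding the number of cells per tetrahedron and the number of tetrahedra needed to triangulate each, yields the explicit bound of $200t$. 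I would cite Lackenby's construction for the precise constant rather than re-deriving it.

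For the runtime, the key point is that the combinatorics of the collapsed decomposition is determined by the normal coordinate vector $v(A) \in \Z^{7t}$, whose entries are bounded by $w$. Thus we only need $O(\log w)$ bits per coordinate, and all of the bookkeeping (determining which disk types appear in each tetrahedron, matching faces across glued tetrahedra using the matching equations, and building the triangulation of each collapsed cell) can be carried out in polynomially many arithmetic operations on these integers. Since each arithmetic operation on $O(\log w)$-bit integers takes $\poly(\log w)$ time, the total running time is $\poly(t\log w)$. The main obstacle, and the reason I would lean heavily on Lackenby's work rather than redoing it, is verifying that the collapse of the parallelity bundles really does produce a triangulation (as opposed to merely a cell complex) with the claimed constant $200$; this is a delicate but essentially combinatorial check done carefully in \cite{lackenby-knottedness}.
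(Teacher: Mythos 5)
Your overall strategy---defer to Lackenby's cut-and-retriangulate construction and its parallelity-bundle machinery---is the same one the paper takes; its proof simply says ``we repeat the proof of \cite[Theorem~11.4]{lackenby-knottedness} verbatim.'' But there is one substantive point that your proposal misses, and it is the only real content of the paper's argument: Lackenby's Theorem~11.4 is stated for cutting along a union of \emph{tori}, not along an annulus. You cannot just cite it; you have to check that the argument survives the change of surface. The paper does this by observing that the bases of the parallelity bundles are subsurfaces of $A \sqcup A$, so when $A$ is a union of tori they have genus at most $1$, and when $A$ is an annulus they have genus $0$; hence all of Lackenby's estimates (in particular the constant $200$) still hold. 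Without some remark of this kind your proof is incomplete, since the result you are leaning on does not, as stated, cover the case you need.

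A secondary inaccuracy: your description of the collapse as a purely local operation in which ``each tetrahedron contributes only $O(1)$ cells'' is not how the construction works, and taken literally it is false. A component of the parallelity bundle $\cB$ is an $I$-bundle over a subsurface of $A \sqcup A$ that can run through many tetrahedra, and it may be a twisted $I$-bundle; one cannot collapse the product regions independently tetrahedron by tetrahedron and expect the identifications to be globally consistent. The actual argument triangulates the $O(t)$ handles of $M' \ssm \cB$ directly, determines the global topology of each component of $\cB$ using \cite[Theorem~9.3]{lackenby-knottedness}, and then builds a small triangulation of $\cB$ by hand from that topological description before gluing it back in. Your runtime analysis (all bookkeeping is arithmetic on the $7t$ coordinates of $v(A)$, each with $O(\log w)$ bits) is fine and matches the intended argument.
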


\begin{proof}
We repeat the proof of \cite[Theorem~11.4]{lackenby-knottedness} verbatim.  The algorithm converts $\cT$ into a handle structure, cuts along $A$ and converts the resulting handle structure back into a triangulation.  This may produce at least $O(w)$ handles, but most of them are quite simple: they are 3-balls cut out by two parallel elementary disks of $A$ inside a single handle of $\cT$, and these fit into a union $\cB$ of $I$-bundles over subsurfaces of $A \sqcup A$, called ``parallelity bundles.''  So we triangulate the linearly many (in $t$) handles of $M' \ssm \cB$, determine the topology of each component of $\cB$ by \cite[Theorem~9.3]{lackenby-knottedness}, and then use this to build a simpler triangulation of $\cB$ by hand which we glue to the triangulation of $M' \ssm \cB$.

The only difference is that \cite[Theorem~11.4]{lackenby-knottedness} is stated for $A$ a union of tori, and so the base of each parallelity bundle has genus at most 1.  In our setting the genus is zero since $A$ is an annulus, so the same estimates as in Lackenby's original proof still hold.
\end{proof}

We combine the above results to conclude that \torusknot{} is in \NP{}.

\begin{theorem} \label{thm:torus-knot-np}
Given a diagram $D$ with $n$ crossings of a knot $K$, if $K$ is a torus knot then there is a certificate which can be used to verify this in $\poly(n)$ time.
\end{theorem}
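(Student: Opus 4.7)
The plan is to build a certificate consisting of (i) the triangulation $\cT$ of $E_K$ produced by Proposition~\ref{prop:triangulate-exterior}, (ii) the vector of normal coordinates $v(A) \in \Z^{7t}$ of an essential normal annulus $A$ whose complement in $E_K$ is a pair of solid tori, and (iii) certificates that each of the two components of $E_K \ssm N(A)$ is a solid torus, using Ivanov's result \cite{ivanov} that $S^1 \times D^2$ recognition is in $\NP$. The verifier will reconstruct $\cT$, check that $v(A)$ describes a valid normal surface of the correct topological type, cut along it, rebuild the complementary triangulations, and apply the solid torus verifiers.

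First I would run Proposition~\ref{prop:triangulate-exterior} to get $\cT$ with $t \leq Cn$ tetrahedra in $\poly(n)$ time, with a two-triangle, one-vertex boundary pattern. Next, I would observe that Proposition~\ref{prop:small-normal-annuli} guarantees the existence of a normal essential annulus $A$ with $\|v(A)\|_{L^1} \leq 7t \cdot 2^{7t}$; crucially, although $A$ may have exponentially large weight, each coordinate has $O(t) = O(n)$ bits by Theorem~\ref{thm:hlp-bound}, so $v(A)$ fits in a certificate of size $\poly(n)$. The verifier checks in $\poly(n)$ time that $v(A)$ has nonnegative integer entries, satisfies the matching equations, uses at most one quadrilateral type per tetrahedron, and cuts out a connected orientable surface with two boundary components and Euler characteristic zero (all computable from $v(A)$ by standard formulas); this confirms $A$ is an embedded annulus.

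To certify that $A$ is essential, the verifier reads the boundary normal coordinates $(2a,2b,2c)$ that $v(A)$ induces on the two-triangle torus $\partial E_K$ and checks that at least one of $a+b, a+c$ is odd, which by the discussion after Lemma~\ref{lem:annulus-pi1} guarantees that $\partial A$ represents a nontrivial (in fact primitive) class in $H_1(\partial E_K)$, hence $A$ is incompressible and boundary-incompressible. Then the verifier applies Proposition~\ref{prop:annulus-triangulation} to build a triangulation of $E_K \ssm N(A)$ with at most $200t$ tetrahedra in $\poly(t \log \|v(A)\|_{L^1}) = \poly(n)$ time, and identifies its two connected components $M_1, M_2$ (which the certificate specifies).

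Finally, the certificate includes Ivanov's $\NP$ certificates that $M_1$ and $M_2$ are each homeomorphic to $S^1 \times D^2$; since each $M_i$ has a triangulation of size $O(n)$, these certificates have size $\poly(n)$ and can be verified in $\poly(n)$ time. Once both are confirmed to be solid tori, the description of $E_K \ssm N(A)$ in Proposition~\ref{prop:small-normal-annuli} forces $K$ to be a torus knot, as desired. The main potential obstacle is the exponential weight of $A$: one must keep careful track that all arithmetic with $v(A)$, including the matching-equation checks, the cut-and-retriangulate step, and the identification of the boundary coordinates, is performed on the polynomial-size bit representation and not on explicit realizations of the surface. This is exactly what Proposition~\ref{prop:annulus-triangulation} (via Lackenby's parallelity-bundle technique) and the $(a,b,c)$ coordinate description on the boundary torus are engineered to make possible.
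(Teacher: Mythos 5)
Your overall architecture matches the paper's: triangulate $E_K$ via Proposition~\ref{prop:triangulate-exterior}, supply normal coordinates of a bounded-weight essential annulus from Proposition~\ref{prop:small-normal-annuli}, cut and retriangulate via Proposition~\ref{prop:annulus-triangulation}, and verify Ivanov solid-torus certificates for the two pieces. But there is a genuine soundness gap in your essentiality argument. You claim that checking $\partial A$ is homologically nontrivial in $H_1(\partial E_K)$ ``guarantees \dots $A$ is incompressible and boundary-incompressible.'' Lemma~\ref{lem:annulus-pi1} gives neither of these unconditionally: it only gives $\pi_1$-injectivity of $A$, and only under the hypothesis that $K$ is a \emph{nontrivial} knot (its proof uses that $\pi_1(\partial E_K) \to \pi_1(E_K)$ is injective, which fails for the unknot). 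Moreover, even for knotted $K$, an incompressible annulus can still be boundary-parallel, so your check does not certify essentiality by itself. Since you never verify that $K$ is knotted, your certificate is not sound: for the unknot, a boundary-parallel annulus in the solid torus $E_K$ has homologically nontrivial boundary and splits $E_K$ into two solid tori, so a malicious prover could pass your verifier with an unknot diagram.

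The paper closes both holes with one extra verification step: it first checks $\Delta_K(t) \neq 1$ (computable in $\poly(n)$ time by Proposition~\ref{prop:compute-signature}, and harmless since no torus knot has trivial Alexander polynomial). Knottedness then makes Lemma~\ref{lem:annulus-pi1} applicable, so $A$ is incompressible, hence essential or boundary-parallel; and boundary-parallelism is excluded \emph{a posteriori}, because it would force one complementary component to be homeomorphic to $E_K$, which cannot be a solid torus when $K$ is knotted. You should add this knottedness check (or an equivalent one) and restructure the essentiality claim as a deduction from the two-solid-tori conclusion rather than from the boundary-homology check alone. One smaller point: your assertion that connectedness, orientability, and the boundary count of $A$ are ``computable from $v(A)$ by standard formulas'' needs the Agol--Hass--Thurston orbit-counting algorithm to run in $\poly(t \log \lVert v(A)\rVert_{L^1})$ time, since the weight is exponential; the paper cites this explicitly.
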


\begin{proof}
Using Proposition~\ref{prop:triangulate-exterior}, we produce a triangulation $\cT$ of the exterior $E_K = S^3 \ssm N(K)$ with $t = O(n)$ tetrahedra.  Given a properly embedded normal annulus $A \subset E_K$, we can use Proposition~\ref{prop:annulus-triangulation} to triangulate its complement with $O(n)$ tetrahedra in polynomial time as well.  Our certificate therefore consists of:
\begin{enumerate}
\item Normal coordinates $v(A)$ for an essential annulus $A$ in $E_K$, with $||v(A)|| \leq 7t\cdot2^{7t}$;
\item Certificates that the two triangulated components of $E_K \ssm N(A)$ are both solid tori, as provided by Ivanov \cite[Theorem~3]{ivanov}.
\end{enumerate}
\noindent To verify the certificate, we do the following:
\begin{enumerate}
\item Verify that $\Delta_K(t) \neq 1$, as described in Proposition~\ref{prop:compute-signature}. \label{i:alexander-not-1}
\item Verify that $A$ is an annulus and that the components of $\partial A$ are homologically nontrivial in $\partial E_K$. \label{i:annulus-injective}
\item Apply Proposition~\ref{prop:annulus-triangulation} to triangulate $E_K \ssm N(A)$ and verify that it has two connected components, say $M_1$ and $M_2$.
\item Verify the certificates that the given triangulations of $M_1$ and $M_2$ produce solid tori, following \cite{ivanov}.
\end{enumerate}
Step~\eqref{i:alexander-not-1} ensures that $K$ is not the unknot, and hence it will help us show that $A$ is essential.  Indeed, if $K$ is knotted then step~\eqref{i:annulus-injective} will show that the map $\pi_1(A) \to \pi_1(E_K)$ is injective by Lemma~\ref{lem:annulus-pi1}, so it is either essential or boundary-parallel.  But if $A$ is boundary-parallel then one component of its complement is homeomorphic to $E_K$, which can only be a solid torus if $K$ is the unknot.  Therefore, once we know that $E_K \ssm N(A)$ is a disjoint union of two solid tori, we can conclude that $A$ is essential and $K$ is indeed a torus knot.  Moreover, torus knots do not have Alexander polynomial 1, so step~\eqref{i:alexander-not-1} will not incorrectly eliminate any torus knots.

To verify that $A$ is an annulus, we first check that the vector $v(A)$ solves the normal surface equations and that it produces a properly embedded surface of Euler characteristic zero with nonempty boundary.  We can then use Agol-Hass-Thurston's orbit-counting algorithm to check in $\poly(t\log ||v(A)||_{L^1})$ time that $A$ is connected \cite[Corollary~14]{agol-hass-thurston}, and we check that it is orientable by using the same algorithm to see that the surface with normal coordinates $2v(A)$ is not connected, as in \cite[Proof of Theorem~2]{agol-hass-thurston}.  Since $A$ is connected and orientable with $\chi(A) = 0$ and $\partial A \neq \emptyset$, it must be an annulus, and this verification takes a total of $\poly(t \log ||v(A)||_{L^1})$ time.  Similarly, we can verify that the components of $\partial A$ are homologically nontrivial as described following Lemma~\ref{lem:annulus-pi1} in $O(t + \log ||v(A)||_{L^1})$ time, since we can read off the normal components of $\partial A$ immediately once we identify the tetrahedra adjacent to $\partial E_K$.

To see that each of these steps can be done in $\poly(n)$ time, we now observe that $t \leq Cn$, where $C$ is a universal constant, and that $||v(A)||_{L^1} \leq 7t \cdot 2^{7t}$ by Proposition~\ref{prop:small-normal-annuli}.  Thus $\poly(t \log ||v(A)||_{L^1}) = \poly(n)$, and we can construct the triangulation of $M_1 \sqcup M_2$, which has at most $200t = O(n)$ tetrahedra, in $\poly(n)$ time.  Finally, since $M_1$ and $M_2$ each have $O(n)$ tetrahedra we can verify their solid torus certificates in $\poly(n)$ time as well.
\end{proof}

\section{Certifying that hyperbolic knots are not torus knots} \label{sec:hyperbolic-certificates}

Let $D$ be a diagram with $n$ crossings which represents a hyperbolic knot $K$.  Assuming the generalized Riemann hypothesis, we will provide a certificate, verifiable in $\poly(n)$ time, that $K$ is not a torus knot.  The construction of these certificates follows the same ideas as Kuperberg's certificates for knottedness \cite{kuperberg}.  As in \cite{kuperberg}, we need GRH to show that these certificates exist, but once they are known to exist they can be verified unconditionally.  This will be a key step in the proof of Theorem~\ref{thm:main-torus}, specifically the assertion that $\torusknot \in \coNP{}$, because it suffices to either certify that a given non-torus knot is hyperbolic or certify that it is a satellite knot; we will discuss certificates for satellite knots in Section~\ref{sec:satellite-np}.

The diagram $D$ determines a Wirtinger presentation
\[ \pi_1(S^3 \ssm K) = \langle g_1, \dots, g_n \mid r_1, \dots, r_n \rangle \]
of the knot group, in which each generator $g_i$ is a meridian around some strand and the relations $r_i$ all have the form $g_{m_i}g_{n_i}g_{m_i}^{-1} = g_{p_i}$ for some $m_i,n_i,p_i$.  Using $D$, we fix peripheral elements $\mu = g_1$ and $\lambda$, representing a meridian and a longitude.  We observe that $\lambda$ can be expressed as a product of at most $2n$ of the generators $g_i$ and their inverses: to see this, we use Seifert's algorithm to find a Seifert surface $\Sigma$ consisting of some number of disks connected by $n$ bands (one per crossing), take a parallel copy of $K$ inside $\Sigma$, and observe that as this copy passes through each band twice it contributes a total of two generators (or their inverses) to $\lambda$.

\begin{definition}
An \emph{uncentered certificate} that $D$ is not a diagram of a torus knot consists of:
\begin{enumerate}
\item A pair of relatively prime integers $r,s$ with $|rs| < 3n$ such that $K$ has the same Alexander polynomial and signature as $T_{r,s}$, or $\emptyset$ if no such pair exists;
\item If we have integers $r,s$ instead of $\emptyset$:
\begin{itemize}
\item A prime $p$, with $\log(p) = \poly(n)$;
\item A collection of $2\times 2$ matrices $M_1, M_2, \dots, M_n \in SL_2(\F_p)$, defining a representation
\[ \rho: \pi_1(S^3 \ssm K) \to SL_2(\F_p) \]
by $\rho(g_i) = M_i$, such that $\rho(\mu^{rs}\lambda) \rho(g_i) \neq \rho(g_i)\rho(\mu^{rs}\lambda)$ for some $i$.
\end{itemize}
\end{enumerate}
\end{definition}

\noindent To verify an uncentered certificate, we do the following:
\begin{enumerate}
\item Using Proposition~\ref{prop:which-torus-knot}, check in $\poly(n)$ time that the choice of $(r,s)$ or $\emptyset$ was made correctly.  If we have $\emptyset$, then we stop; otherwise $|rs| < 3n$ by Lemma~\ref{lem:crossing-number-pq}.
\item Verify that $\det(M_i) = 1$ for all $i$, and that the $M_i$ satisfy the relations $r_1,\dots,r_n$.
\item Compute $\rho(\mu^{rs}\lambda)$ and verify that it does not commute with some $M_i$.
\end{enumerate}

It is mostly clear that if the certificate exists, then it can be verified in $\poly(n)$ time.  The only part which requires some additional thought is the computation of $\rho(\mu^{rs}\lambda)$.  Since $\mu^{rs}\lambda$ can be written as a word of length at most $|rs| + 2n < 5n$ in the generators $g_1^{\pm 1}, \dots, g_n^{\pm 1}$, we can express $\rho(\mu^{rs}\lambda)$ as a product of the matrices $M_1^{\pm 1}, \dots, M_n^{\pm 1}$ of length at most $5n$ and so it can also be computed in polynomial time.  We also remark that we do not actually need to verify that $p$ is prime: even if it is not, the representation $\rho$ still certifies that $\mu^{rs}\lambda$ is not central, which is all that matters.

\begin{proposition} \label{prop:torus-knots-sl2fp}
If $D$ has an uncentered certificate, then it is not a diagram of a torus knot.
\end{proposition}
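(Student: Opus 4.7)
The plan is to split on the two possible forms of the first component of the certificate and argue each case directly, using the machinery already established in Sections~\ref{sec:alex-sig} and \ref{sec:hyperbolic-certificates}.

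If the first component is $\emptyset$, then by the correctness of the algorithm of Proposition~\ref{prop:which-torus-knot} (as verified in step (1) of the certificate check) there is no coprime pair $(r,s)$ with $|r|>s\geq 2$ for which $\Delta_K(t) = \Delta_{T_{r,s}}(t)$ and $\sigma(K) = \sigma(T_{r,s})$. If $K$ were a nontrivial torus knot $T_{a,b}$, then after normalizing so that $|a|>b\geq 2$ the pair $(a,b)$ would satisfy these identities, a contradiction; hence $K$ is not a torus knot in this case.

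Otherwise the certificate supplies a specific pair $(r,s)$ together with matrices $M_1,\dots,M_n$ defining a homomorphism $\rho:\pi_1(S^3\ssm K)\to SL_2(\F_p)$, plus an index $i$ witnessing $\rho(\mu^{rs}\lambda)\rho(g_i)\neq \rho(g_i)\rho(\mu^{rs}\lambda)$. I would argue by contradiction: assume $K$ is a torus knot. Since $K$ shares its Alexander polynomial and signature with $T_{r,s}$, Lemma~\ref{lem:alexander-signature} forces $K$ to be isotopic to $T_{r,s}$ (the signature in particular pins down chirality, matching the sign convention under which $\mu^{rs}\lambda$ is central). The peripheral element $\mu^{rs}\lambda$ then lies in the center of $\pi_1(S^3\ssm K)$ by the classical fact recalled in the introduction \cite{burde-zieschang}, so $\rho(\mu^{rs}\lambda)$ commutes with $\rho(g)$ for every $g$ in the knot group, including $g=g_i$. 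This contradicts the defining property of the certificate.

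I do not expect any real obstacle: all the substantive content has been packaged into Proposition~\ref{prop:which-torus-knot}, Lemma~\ref{lem:alexander-signature}, and the centrality statement for $\mu^{rs}\lambda$ in torus knot groups. The only mild point to keep track of is the chirality compatibility between the pair $(r,s)$ returned by the algorithm and the sign conventions under which $\mu^{rs}\lambda$ is central; this was already built into the proof of Lemma~\ref{lem:alexander-signature} via Rudolph's signature inequality for braid-positive knots.
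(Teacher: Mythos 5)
Your proposal is correct and follows essentially the same route as the paper: the verification step pins the certificate's pair $(r,s)$ to the actual torus knot (via the Alexander polynomial and signature), and then centrality of $\mu^{rs}\lambda$ in $\pi_1(S^3\ssm T_{r,s})$ forces $\rho(\mu^{rs}\lambda)$ to commute with every $\rho(g_i)$, contradicting the certificate. The only cosmetic difference is that the paper establishes this centrality directly from the Heegaard-torus picture rather than citing \cite{burde-zieschang}, and your extra care about the $\emptyset$ case and chirality is consistent with what the paper leaves implicit.
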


\begin{proof}
Suppose that $D$ is a diagram of $T_{r,s}$.  We can take the standard genus-1 Heegaard splitting of $S^3$ and embed $T_{r,s}$ in the Heegaard torus.  Then $\pi_1(S^3 \ssm T_{r,s})$ is generated by the cores of the two genus-1 handlebodies, and $\mu^{rs}\lambda$ is a curve parallel to $T_{r,s}$ in the Heegaard torus, so it commutes with each generator.  Since $\mu^{rs}\lambda$ is central, its image under any representation $\rho: \pi_1(S^3 \ssm T_{r,s}) \to SL_2(\F_p)$ must commute with everything in the image of $\rho$.
\end{proof}

In contrast, we will prove that hyperbolic knots have such certificates, assuming GRH.

\begin{theorem} \label{thm:hyperbolic-certificates}
Assume the generalized Riemann hypothesis.  If $D$ is a diagram of a hyperbolic knot, then it admits an uncentered certificate.
\end{theorem}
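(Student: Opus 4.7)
The plan is to follow Kuperberg's strategy from \cite{kuperberg} for producing $SL_2(\F_p)$ certificates, with the non-centrality of $\mu^{rs}\lambda$ playing the role that nontriviality of a meridian plays in his argument.

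First, I would dispose of the trivial case. Apply the algorithm of Proposition~\ref{prop:which-torus-knot} to $D$. If it returns $\emptyset$, take the uncentered certificate to be $\emptyset$ and stop. Otherwise it returns the unique pair $(r,s)$ with $|r|>s\geq 2$ such that $\Delta_K(t)=\Delta_{T_{r,s}}(t)$ and $\sigma(K)=\sigma(T_{r,s})$, and $|rs|<3n$ by Lemma~\ref{lem:crossing-number-pq}.

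Second, I would produce an $SL_2(\C)$ representation with the required non-commutativity. Since $K$ is hyperbolic, there is a discrete faithful representation $\pi_1(S^3\ssm K)\to PSL_2(\C)$, which lifts to a faithful representation
\[ \rho_\C : \pi_1(S^3\ssm K) \to SL_2(\C), \]
since $H^2(S^3\ssm K;\Z/2)=0$ and knot groups are torsion-free. By \cite{burde-zieschang}, the group of any non-torus knot has trivial center, so the peripheral element $\mu^{rs}\lambda$ is not central in $\pi_1(S^3\ssm K)$. Consequently some Wirtinger generator $g_i$ fails to commute with $\mu^{rs}\lambda$ in $\pi_1$ (otherwise every element would), and faithfulness of $\rho_\C$ forces $\rho_\C(g_i)$ and $\rho_\C(\mu^{rs}\lambda)$ to fail to commute in $SL_2(\C)$.

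Third, I would reduce from $SL_2(\C)$ to $SL_2(\F_p)$ using GRH. The Wirtinger relations together with the determinant condition $\det=1$ define an affine scheme $X/\Z$ whose $R$-points are exactly the homomorphisms $\pi_1(S^3\ssm K)\to SL_2(R)$. For each generator $g_i$, the condition that $[\rho(g_i),\rho(\mu^{rs}\lambda)]\neq I$ cuts out a Zariski-open subscheme $U_i\subset X$; let $U=\bigcup_i U_i$. The word $\mu^{rs}\lambda$ has length at most $|rs|+2n<5n$ in the generators $g_j^{\pm 1}$, so the equations and inequations defining $U$ are polynomials of total bit-complexity $\poly(n)$. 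The representation $\rho_\C$ above exhibits a nonempty $\C$-point of $U$. Invoking Kuperberg's main technical lemma---which uses effective Chebotarev under GRH to guarantee that a nonempty $\Z$-scheme of polynomial complexity has an $\F_p$-point for some prime $p$ with $\log p=\poly(n)$---yields such a prime and matrices $M_1,\dots,M_n\in SL_2(\F_p)$ giving a point of $U(\F_p)$. These matrices constitute the desired uncentered certificate.

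The main obstacle is step three: the transfer from $\C$ to $\F_p$. The existence of the $\C$-point is immediate from faithfulness and triviality of the center, but one must verify that the scheme $U$ (and in particular the data encoding $\mu^{rs}\lambda$ and the chosen commutator) fits the complexity hypotheses of Kuperberg's lemma. This is routine given the length bound $|\mu^{rs}\lambda|<5n$ and the fact that the Wirtinger presentation has $O(n)$ relations each of bounded length, so the reduction transports from \cite{kuperberg} with only cosmetic changes. No verification that $p$ is prime is required of the checker, as noted after the definition of the certificate.
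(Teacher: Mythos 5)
Your proposal is correct and follows essentially the same route as the paper: dispose of the $\emptyset$ case via Proposition~\ref{prop:which-torus-knot}, use hyperbolicity to get a discrete faithful $SL_2(\C)$ representation, combine it with the Burde--Zieschang trivial-center theorem to see that $\rho(\mu^{rs}\lambda)$ fails to commute with some generator's image, and then transfer to $SL_2(\F_p)$ via Kuperberg's GRH-based result. The only cosmetic difference is that the paper packages the non-commutativity condition as an explicit affine variety via the Rabinowitsch trick (Lemma~\ref{lem:noncommutative-variety}) rather than as a Zariski-open subscheme, which is exactly the ``routine'' step you flag.
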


\begin{remark}
It is entirely possible that satellite knots also admit uncentered certificates, but we do not know a proof of this.  In Section~\ref{sec:satellite-np} we will use a different strategy to certify satellite knots.
\end{remark}

\begin{lemma} \label{lem:noncommutative-variety}
Let $D$ be a knot diagram with $n > 0$ crossings representing a knot $K$, and fix an integer $m$ with $|m| < 3n$.  There is an algebraic variety which is nonempty if and only if there is a representation
\[ \rho: \pi_1(S^3 \ssm K) \to SL_2(\C) \]
such that $\rho(\mu^m\lambda)$ does not commute with some other element $\rho(g)$ of the image.  Moreover, we can define such a variety using $8n$ variables and $5n+1$ polynomials, each of which has maximum degree at most $5n+2$ and integer coefficients bounded by $2^{5n+1}$ in absolute value.
\end{lemma}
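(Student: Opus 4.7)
The plan is to build the variety from two pieces: the familiar $SL_2(\C)$ representation variety of the Wirtinger presentation, plus a single extra equation (via the Rabinowitsch trick) that forces $\rho(\mu^m\lambda)$ to fail to commute with at least one Wirtinger generator. Introduce $4n$ variables $a_i, b_i, c_i, d_i$ for $i=1,\dots,n$ and set $M_i := \twosmallmatrix{a_i}{b_i}{c_i}{d_i}$, together with $4n$ auxiliary variables $u_{i,\alpha,\beta}$ indexed by $i\in\{1,\dots,n\}$ and $(\alpha,\beta)\in\{1,2\}^2$, for a total of $8n$ variables. The first $5n$ equations are the $n$ determinant conditions $a_id_i-b_ic_i-1=0$ together with the $4n$ scalar entries of the matrix equations $M_{m_i}M_{n_i}-M_{p_i}M_{m_i}=0$ coming from the $n$ Wirtinger relations; each of these is quadratic with coefficients $\pm 1$.

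For the non-centrality condition, write $\mu^m\lambda$ as a word $h_1\cdots h_L$ in $g_1^{\pm 1},\dots,g_n^{\pm 1}$ of length $L\le |m|+2n\le 5n-1$, and form
\[
W := N_1 N_2 \cdots N_L, \qquad N_k = \begin{cases} M_j & \text{if } h_k = g_j, \\ \adj(M_j) & \text{if } h_k = g_j^{-1}, \end{cases}
\]
so that $W=\rho(\mu^m\lambda)$ on the $\det=1$ locus. The $4n$ entries of the commutators $E_i := WM_i-M_iW$ are then polynomials of degree at most $L+1\le 5n$ in the $a_i,b_i,c_i,d_i$. The final equation is the Rabinowitsch equation
\[
\sum_{i=1}^{n}\sum_{\alpha,\beta\in\{1,2\}} u_{i,\alpha,\beta}\,(E_i)_{\alpha\beta} - 1 = 0,
\]
which, given a choice of the $a_i,b_i,c_i,d_i$ satisfying the previous equations, is solvable in the $u$-variables if and only if some $(E_i)_{\alpha\beta}\neq 0$; equivalently, $\rho(\mu^m\lambda)$ fails to commute with some $\rho(g_i)$, which is the same as failing to commute with some element of the image since an element commuting with every generator is central. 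This yields $n+4n+1=5n+1$ polynomials in all.

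For the bookkeeping, the largest degree is that of the Rabinowitsch equation, namely $1+(L+1)\le 5n+1\le 5n+2$. Each entry of $M_j$ and $\adj(M_j)$ is $\pm$ a single variable, so the entries of $W$, being sums of at most $2^{L-1}$ signed monomials in a product of $L$ such matrices, have integer coefficients of magnitude at most $2^{L-1}$; forming $WM_i$ and $M_iW$ doubles this and the subtraction doubles it once more, yielding at most $2^{5n}$ for each entry of $E_i$, and the Rabinowitsch step does not increase the per-monomial bound because the $u$-variables are fresh. All coefficients therefore lie well within the claimed bound $2^{5n+1}$. The only step beyond routine bookkeeping is the Rabinowitsch trick, which packages the disjunctive condition ``some entry of some $E_i$ is nonzero'' into a single algebraic equation at the cost of the $4n$ auxiliary variables $u_{i,\alpha,\beta}$; the rest is just careful counting.
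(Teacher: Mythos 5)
Your proposal is correct and follows essentially the same route as the paper's proof: the Wirtinger representation variety in $4n$ matrix variables, the $5n$ determinant and relation equations, the peripheral word of length at most $5n-1$, and the Rabinowitsch trick with $4n$ auxiliary variables to encode non-commutation with some generator, with matching degree and coefficient bookkeeping.
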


\begin{proof}
We use $D$ to construct a Wirtinger presentation
\[ \pi_1(S^3 \ssm K) = \langle g_1,g_2,\dots,g_n \mid r_1,r_2,\dots,r_n \rangle, \]
in which each $g_i$ is a meridional loop around a strand of $D$ and each crossing produces a relation $r_i$ of the form $g_{m_i}g_{n_i}g_{m_i}^{-1} = g_{p_i}$.  We will take $\mu =  g_1$ and let $\lambda$ be a longitude.

We can construct the $SL_2(\C)$ representation variety $R(K)$ from this presentation as an algebraic subset of $\C^{4n}$ by using $4n$ generators $a_i,b_i,c_i,d_i$, packaged into matrices $M_i = \twosmallmatrix{a_i}{b_i}{c_i}{d_i}$.  Then $R(K)$ is defined by a total of $5n$ polynomial equations: we require $\det(M_i) = a_id_i - b_ic_i = 1$ for each $i$, and each relation $r_i$ of the form $g_{m_i}g_{n_i}g_{m_i}^{-1} = g_{p_i}$ ($1 \leq i \leq n$) contributes four polynomial equations coming from the entries of the $2\times 2$ matrix equation $M_{m_i}M_{n_i} = M_{p_i}M_{m_i}$.

We must now select only those representations $\rho$ for which $\rho(\mu^{m}\lambda)$ does not commute with some $M_i$, so we first introduce some notation.  As explained above, we can write $\lambda$ as a word of length $l \leq 2n$ in the generators $g_1^{\pm 1}, \dots, g_n^{\pm 1}$, say $\lambda = g_{i_1}^{\epsilon_1} g_{i_2}^{\epsilon_2} \dots g_{i_l}^{\epsilon_l}$ where $\epsilon_j = \pm 1$ for each $j$.  We will let $\epsilon = \sign(m) \in \{\pm 1\}$ and write
\[ A_m = (M_1^{\epsilon})^{|m|} M_{i_1}^{\epsilon_1} M_{i_2}^{\epsilon_2} \dots M_{i_l}^{\epsilon_l}, \]
where for any $j$ we interpret $M_j^{-1}$ as the matrix $\twosmallmatrix{d_j}{-b_j}{-c_j}{a_j}$.  This is a product of at most $|m|+l < 5n$ matrices $M_j^{\pm 1}$, so a simple induction says that its entries are sums of at most $2^{5n-1}$ monomials of degree at most $5n$ each in the generators $a_j,b_j,c_j,d_j$.  Moreover, if $a_j,b_j,c_j,d_j$ correspond to a representation $\rho \in R(K)$ then we have $A_m = \rho(\mu^m\lambda)$.

In order to select only the desired representations, we need only check that $A_m$ does not commute with the image $M_i$ of some generator $g_i$.  We accomplish this by the Rabinowitsch trick, following Kuperberg \cite{kuperberg}: namely, we add another $4n$ generators $t_{ijk}$ with $1 \leq i \leq n$ and $1 \leq j,k \leq 2$, and we add a single equation
\[ \sum_{i,j,k} t_{ijk}\big(A_mM_i - M_iA_m\big)_{j,k} = 1, \]
where $\big(M\big)_{j,k}$ denotes the $(j,k)$th entry of the matrix $M$.  It is clear that this equation is satisfiable for some values of the $t_{ijk}$ if and only if $A_m M_i \neq M_i A_m$ for some $i$, i.e.\ if and only if the corresponding representation $\rho$ satisfies $\rho(\mu^m\lambda)\rho(g_i) \neq \rho(g_i)\rho(\mu^m\lambda)$.  Then every term in this equation has degree at most $5n+2$ and integer coefficients bounded in absolute value by $2^{5n+1}$, since $A_mM_i$ and $M_iA_m$ are each sums of at most $2^{5n}$ monomials of degree at most $5n+1$, and so the lemma follows.
\end{proof}

In order to produce an $SL_2(\F_p)$ representation from the $SL_2(\C)$ representation variety, we use the following theorem, which is a combination of results of Koiran \cite{koiran} and Lagarias-Odlyzko and Weinberger \cite{lagarias-odlyzko,weinberger}.

\begin{theorem}[{\cite[Theorem~3.3]{kuperberg}}] \label{thm:grh-solution-mod-p}
Let $f_1,\dots,f_m \in \Z[x_1,\dots,x_n]$ be non-constant integer polynomials with degree at most $d$ and all coefficients having absolute value at most $r$, and suppose that the system $f_1=f_2=\dots=f_m=0$ has a solution in $\C^n$.  Assuming the generalized Riemann hypothesis, there is a prime $p$ with $\log(p) = \poly(n,m,\log(d),\log(r))$ such that the system has a solution in $(\Z/p\Z)^n$.
\end{theorem}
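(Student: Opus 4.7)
The plan is to combine two ingredients: a geometric reduction, due to Koiran, of the existence of a complex zero of an integer polynomial system to the existence of an $\F_p$-point of a nearby arithmetic variety for primes $p$ in a set of positive density in a number field; together with the effective Chebotarev density theorem of Lagarias--Odlyzko and Weinberger, conditional on GRH, which produces such a prime $p$ that is small relative to the discriminant. The combination will yield $\log(p) = \poly(n,m,\log d,\log r)$.

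First I would reduce to a zero-dimensional system. Assuming $V = V(f_1,\dots,f_m) \subset \mathbb{A}^n_\C$ is nonempty, cut down by sufficiently generic rational hyperplanes with integer coefficients of controlled size to obtain a zero-dimensional subscheme $V' \subset V$, also defined over $\Z$, with a nonempty finite set of geometric points. The coordinates of each point of $V'(\Qbar)$ lie in some number field, and by the arithmetic B\'ezout inequality (in the form of Brownawell, or Masser--W\"ustholz) the degree of that number field and the logarithmic Weil height of any chosen point $P \in V'(\Qbar)$ are both polynomial in $n, m, \log d, \log r$. Fix such a $P$ and let $K = \Q(P)$; standard estimates translating heights and degrees of algebraic numbers into discriminant bounds then give $[K:\Q]$ and $\log|\mathrm{disc}(K)|$ both $\poly(n,m,\log d,\log r)$.

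Next I invoke effective Chebotarev under GRH. The Lagarias--Odlyzko--Weinberger theorem asserts that, for a Galois extension $L/\Q$ and any conjugacy class $C$ in $\mathrm{Gal}(L/\Q)$, the least rational prime unramified in $L$ whose Frobenius lands in $C$ is bounded by $c \cdot (\log |\mathrm{disc}(L)|)^2$ for an absolute constant $c$. I apply this to the Galois closure $\tilde K$ of $K$ with $C = \{1\}$ to find a prime $p$ that splits completely in $\tilde K$, and I further require $p$ to avoid the finitely many primes dividing denominators of the coordinates of $P$, leading coefficients of the polynomials produced in the cutting-down step, and the resultants separating the irreducible components of $V$; each of these bad primes is again controlled polynomially in the input parameters, so they only eliminate a thin initial segment of candidates and the resulting $p$ still satisfies $\log p = \poly(n,m,\log d,\log r)$. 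Reducing $P$ modulo any prime of $\mathcal{O}_K$ above $p$ lands in $\F_p$ because $p$ splits completely in $K$, producing the desired solution to $f_1 = \dots = f_m = 0$ in $(\Z/p\Z)^n$.

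The hard part will be the arithmetic B\'ezout step: one must produce $V'$ with quantitative control that is genuinely polynomial (not merely effective) in the four input parameters, and one must simultaneously ensure that $V'(\C) \neq \emptyset$ whenever $V(\C) \neq \emptyset$. Koiran handles this via a randomization scheme, taking generic linear combinations of the $f_i$ and coordinates, and invoking explicit height estimates for isolated solutions of zero-dimensional integer polynomial systems; making the relevant constants polynomial (rather than finite) in $n, m, \log d, \log r$ is the delicate point, and is where the bulk of the technical work lies. Once this is established, the remainder is essentially bookkeeping: the discriminant bound for $K$ follows from Minkowski-style estimates applied to the height bound for $P$, and the final appeal to Lagarias--Odlyzko--Weinberger is used as a black box.
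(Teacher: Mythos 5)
This statement is not proved in the paper at all: it is imported verbatim from Kuperberg as a black box, with only a remark attributing the underlying ingredients to Koiran and to Lagarias--Odlyzko and Weinberger. So you are reconstructing Koiran's argument, and your overall architecture (reduce to a zero-dimensional system, bound the arithmetic complexity of an algebraic solution, then invoke effective Chebotarev under GRH and reduce modulo a small prime) is indeed the right skeleton. However, there is a genuine quantitative gap that breaks the stated bound $\log(p) = \poly(n,m,\log(d),\log(r))$.

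The problem is your claim that $[K:\Q]$ and $\log|\mathrm{disc}(K)|$ are polynomial in $n,m,\log d,\log r$, and the subsequent passage to the Galois closure. B\'ezout only gives $\deg V' \leq d^n$, so the field of definition $K=\Q(P)$ generically has degree exponential in $n$ (already for $d=2$), and likewise $\log|\mathrm{disc}(K)|$ is exponential in $n$; only the \emph{logarithms} of these quantities are polynomial. This by itself is survivable, since the theorem only asks for $\log p$ to be polynomial. What is not survivable is demanding that $p$ split completely in the Galois closure $\tilde{K}$: with $D=[K:\Q]\leq d^n$ one only has $[\tilde{K}:\Q]\leq D!$, so $\log|\mathrm{disc}(\tilde{K})|$ can be of order $D!\cdot D\log D$, and the Lagarias--Odlyzko bound $p \ll (\log|\mathrm{disc}(\tilde{K})|)^2$ then yields $\log p \approx D\log D = d^n\cdot n\log d$, which is exponential in $n$. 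The correct move --- and the one Koiran and Kuperberg actually make --- is to ask only for a rational prime $p$ admitting a \emph{degree-one} prime of the (generally non-Galois) field $K$ above it; equivalently, Frobenius need only meet the union of conjugacy classes intersecting a point stabilizer, a set of density at least $1/[K:\Q]$. Running effective Chebotarev under GRH for the Dedekind zeta function of $K$ itself gives such a $p$ of size $\poly([K:\Q],\log|\mathrm{disc}(K)|) = \poly(d^n,\dots)$, hence $\log p = \poly(n,m,\log d,\log r)$ as required; reducing $P$ at that degree-one prime still lands you in $\F_p$. (In the univariate form of this statement, Kuperberg's Theorem~3.2, the gap is even starker: the complete-splitting requirement would give $\log p \approx d\log d$ instead of the needed $p = \poly(d,\log r)$.) Your remaining concerns about controlling the zero-dimensional reduction and the bad primes are legitimately where the technical work in Koiran's paper lies, but as written the Chebotarev step does not deliver the theorem.
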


We remark that the generalized Riemann hypothesis is used in \cite{lagarias-odlyzko,weinberger} to provide an effective version of the Chebotarev density theorem, guaranteeing that we can take $p$ with at most polynomially many digits.  The basic result we need to assume (and which is implied by GRH) is stated as \cite[Theorem~3.2]{kuperberg}; it asserts that an irreducible polynomial $h \in \Z[x]$ with degree $d$ and all coefficients between $-r$ and $r$ has a root in $\Z/p\Z$ for some prime $p = \poly(d,\log r)$.  Without GRH, Koiran's work \cite{koiran} plus the classical Chebotarev density theorem says unconditionally that we can take any $p$ in some positive-density subset of the primes, but it does not provide a small enough bound on the minimal value of $p$.

\begin{proof}[Proof of Theorem~\ref{thm:hyperbolic-certificates}]
We may assume that $K$ has the same Alexander polynomial and signature as the torus knot $T_{r,s}$, since otherwise our certificate does not require an actual $SL_2(\F_p)$ representation.  In this case Lemma~\ref{lem:crossing-number-pq} says that $|rs| < 3n$.

Since $K$ is hyperbolic, there is a discrete, faithful representation $\rho_0: \pi_1(S^3 \ssm K) \to SL_2(\C)$, as proved in \cite[Proposition~3.1.1]{culler-shalen-splittings} and attributed to Thurston.  A theorem of Burde and Zieschang \cite{burde-zieschang} says that $\pi_1(S^3 \ssm K)$ has trivial center since $K$ is not a torus knot, so $\mu^{rs}\lambda$ does not commute with some element $g$ of the knot group.  But then $\rho_0(\mu^{rs}\lambda)$ does not commute with $\rho_0(g)$ since $\rho_0$ is faithful, so if we set $m=rs$ then the variety of Lemma~\ref{lem:noncommutative-variety} is nonempty.  We apply Theorem~\ref{thm:grh-solution-mod-p} to conclude that the system of equations defining this variety has a solution mod $p$, where
\[ \log(p) = \poly(8n, 5n+1, \log(5n+2), \log(2^{5n+1})) = \poly(n). \]
This solution gives the desired prime $p$ and representation $\pi_1(S^3 \ssm K) \to SL_2(\F_p)$.
\end{proof}

\section{Satellite knot recognition is in \NP} \label{sec:satellite-np}

A knot $K \subset S^3$ is a satellite knot if and only if its exterior contains some incompressible, non-boundary-parallel tori.  In this case the JSJ decomposition \cite{jaco-shalen, johansson} of the knot exterior is nontrivial, with $E_K = S^3 \ssm N(K)$ being cut along such incompressible tori into atoroidal and Seifert fibered pieces.  In particular, we can certify that a $K$ is a satellite knot by providing the JSJ tori and efficiently checking that they are incompressible and not all boundary-parallel.

In the first part of this certification, we specify the JSJ tori as a union $T$ of normal surfaces, using exponential bounds on their weight due to Mijatovi\'c \cite{mijatovic}, and then use Lackenby's work \cite{lackenby-knottedness} to certify that they are incompressible and to triangulate their complement.  The incompressibility certificate is a crucial part of Lackenby's Thurston norm certificate \cite[Section~13]{lackenby-knottedness}, which we explain very briefly here.

The complement $E_K \ssm N(T)$ can be written $M'_1 \sqcup M'_2$, where $M'_1$ consists of the atoroidal components and $M'_2$ the Seifert fibered components.  We certify using \cite[Theorem~12.3]{lackenby-knottedness} that $M'_2$ has incompressible boundary.  We then turn $M'_1$ into a sutured manifold with empty sutures and certify that $\partial M'_1$ is incompressible by providing a certain sutured manifold hierarchy for $(M'_1,\emptyset)$, of length linear in the number of tetrahedra.  The certificate thus includes handle structures for every other manifold in the hierarchy, denoted $(M_i,\gamma_i)$, and normal surface vectors for the decomposing surfaces $S_i \subset M_i$.  (The remaining sutured manifolds are decomposed along annuli which can be determined algorithmically; see \cite[Theorem~10.1]{lackenby-knottedness}.)  The certificate also includes proof that the last manifold in the hierarchy is a product, using the theorem of Schleimer \cite{schleimer} and Ivanov \cite{ivanov} that 3-ball recognition is in \NP{}.  Much of the difficulty comes from needing to decompose each $(M_i,\gamma_i)$ efficiently along the corresponding $S_i$ and subsequent annuli, so that a handle structure on the resulting $(M_{i+1},\gamma_{i+1})$ can be constructed and verified in polynomial time.

Taking the incompressibility of the JSJ tori for granted at the moment, we can understand the components of their complement as follows.

\begin{lemma} \label{lem:jsj-complement}
Let $K \subset S^3$ be a knot with exterior $E_K = S^3 \ssm N(K)$, and let $T \subset E_K$ be a union of finitely many disjoint incompressible tori.  Then one component of $E_K \ssm N(T)$ is the complement of some knot $K' \subset S^3$, and every other component is the complement of a knot in $S^1 \times D^2$.
\end{lemma}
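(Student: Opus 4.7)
The plan is to describe $E_K \ssm N(T)$ by ordering the solid tori that the components of $T$ bound in $S^3$, and then reading off each component as the region between consecutive nested solid tori.

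\textbf{Step 1.} I would first show that each $T_i$ bounds a solid torus $V_i \subset S^3$ on one side, with $K \subset V_i$. Since $S^3$ is simply connected, van Kampen forces $T_i$ to be compressible on at least one side in $S^3$; combined with the Schoenflies theorem, that side is a solid torus $V_i$. If $K$ were in $W_i := \overline{S^3 \ssm V_i}$, then a meridian disk of $V_i$ would give a compressing disk for $T_i$ inside $E_K$, contradicting incompressibility. Hence $K \in V_i$, and incompressibility of $T_i$ in $E_K \supset W_i$ then forces $W_i$ to be a nontrivial knot complement rather than a solid torus (otherwise a meridian disk of $W_i$ would be a compressing disk for $T_i$ inside $E_K$).

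\textbf{Step 2.} Next I would show that the solid tori $V_i$ are linearly ordered by inclusion. Given two distinct tori $T_i, T_j$, the disjoint torus $T_j$ lies in $\inr(V_i)$ or $\inr(W_i)$. A short case analysis, using that $V_i \cap V_j \ni K$ together with the connectedness of $V_j$ and of the nontrivial knot complement $W_i$, rules out the ``crossed'' configuration in which $V_i \cap W_j$ and $V_j \cap W_i$ are both nonempty; such a configuration would express the solid torus $V_j$ as the union of the nontrivial knot complement $W_i$ and another piece glued along $T_i$, which is incompatible with $\pi_1(V_j) = \Z$ via van Kampen. Thus either $V_i \subseteq V_j$ or $V_j \subseteq V_i$ must hold for every pair.

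\textbf{Step 3.} After relabeling so that $V_1 \subsetneq V_2 \subsetneq \cdots \subsetneq V_n$, the components of $E_K \ssm N(T)$ are exactly
\[ V_1 \ssm N(K), \quad V_{j+1} \ssm \inr(V_j) \text{ for } 1 \leq j \leq n-1, \quad \text{and} \quad S^3 \ssm \inr(V_n). \]
Each $V_{j+1}$ is a solid torus, so $V_1 \ssm N(K)$ is the exterior of $K$ inside $V_1 \cong S^1 \times D^2$, and $V_{j+1} \ssm \inr(V_j)$ is the exterior of the core of $V_j$ viewed as a knot in $V_{j+1} \cong S^1 \times D^2$. The remaining component $S^3 \ssm \inr(V_n)$ is the exterior of the core $K'$ of $V_n$ viewed as a knot in $S^3$, giving the desired $K'$.

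The main obstacle will be Step 2: ruling out the crossed configuration requires combining the incompressibility hypothesis with the fact that every $V_i$ contains $K$, and a careful analysis of the pieces of $S^3$ cut out by $T_i \cup T_j$. Once the linear order is established, the identification of the components in Step 3 is essentially bookkeeping.
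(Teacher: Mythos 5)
Your Steps 1 and 3 are correct, and you have rightly identified Step 2 as the crux; unfortunately the argument proposed there does not work, and the nesting claim it is meant to establish is actually false. The van Kampen step fails because in the decomposition $V_j = W_i \cup_{T_i} B$ the map $\pi_1(T_i) \to \pi_1(B)$ need not be injective, so the pushout need not contain $\pi_1(W_i)$ as a subgroup: if the meridian of $W_i$ bounds a disk in $B$, then the relation ``meridian $=1$'' collapses $\pi_1(W_i)$ entirely (a knot group is normally generated by a meridian), and the pushout can perfectly well be $\Z$. This is not repairable, because the crossed configuration genuinely occurs. Take $K = K_1 \# K_2$ with both summands nontrivial, let $S$ be a summing sphere cutting $K$ into arcs $\alpha_1 \subset B_1$ and $\alpha_2 \subset B_2$, and let $T_1 = \partial N(B_1 \cup \alpha_2)$, $T_2 = \partial N(B_2 \cup \alpha_1)$ be the two swallow--follow tori, made disjoint. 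Each is incompressible in $E_K$ (on one side it bounds a nontrivial knot exterior; on the other, $K$ has winding number one in the solid torus $V_i$, so every compressing disk, being a meridian disk, meets $K$), yet $V_1 \cup V_2 = S^3$ and neither solid torus contains the other. The three components of $E_K \ssm N(T_1 \cup T_2)$ are then $E_{K_1}$, $E_{K_2}$, and a composing space with three boundary tori, so the conclusion of the lemma itself fails for this family: two components are knot complements in $S^3$, and no component is the complement of a knot in $S^1 \times D^2$ (which would have exactly two boundary tori).

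You should know that the paper's own proof takes exactly the same route and has exactly the same gap: it declares without argument that ``$T_i < T_j$ if $T_i \subset E_{K_j}$'' is a total order on the tori, which is precisely your nesting claim, and the swallow--follow example shows the relation need not be total. So your proposal is a faithful, indeed more honest, rendering of the intended argument; the genuine missing ingredient is a hypothesis forcing the tori to be nested (e.g.\ restricting to a single torus, where the base case of the argument is sound), or a weakening of the conclusion to assert only that some component adjacent to a given torus is a knot complement in $S^3$ or in $S^1 \times D^2$. As stated, the statement cannot be proved because it is false.
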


\begin{proof}
If $T$ consists of a single torus, then we note that $T$ bounds a solid torus $S^1 \times D^2$ in $S^3$, and since $T$ is incompressible in $E_K$ it follows that $K$ must lie in this solid torus.  If we let $K' \subset S^3$ be the core of this solid torus, then $E_K \ssm N(T)$ consists of two components, one of which is the exterior of $K'$ and the other of which is $(S^1 \times D^2) \ssm N(K)$.

Now suppose that $T$ consists of $n$ tori $T_1, \dots, T_n$, where $n \geq 2$.  As above, each $T_i$ separates $E_K$ into a knot exterior $E_{K_i}$, with boundary $T_i$, and $(S^1 \times D^2) \ssm N(K)$.  We define a total ordering of the tori by $T_i < T_j$ if $T_i \subset E_{K_j}$, and we relabel the tori so that $T_1 < T_2 < \dots < T_n$.  Then $T_n$ divides $S^3 \ssm N(K)$ into two components, one of which is $(S^1 \times D^2) \ssm N(K)$.  The other component is the exterior $E_{K_n}$, and it contains the incompressible tori $T_1,\dots,T_{n-1}$, so the lemma follows by induction on $n$.
\end{proof}

The component which is a knot complement cannot be an unknot complement since its boundary is incompressible, so in order to see that we have a satellite, we just need to show that one of the remaining components is not $T^2 \times I$.  Since these components are all complements of knots in $S^1 \times D^2$, we use the following criterion, originally stated as Theorem~\ref{thm:main-solid-torus-rep}.

\begin{theorem} \label{thm:solid-torus-knot-rep}
Let $P \subset S^1 \times D^2$ be a knot, and let $E_P = (S^1 \times D^2) \ssm N(P)$ be its exterior.  There is a representation
\[ \pi_1(E_P) \to SL_2(\C) \]
with nonabelian image if and only if $P$ is not isotopic to the core $S^1 \times \{0\}$.
\end{theorem}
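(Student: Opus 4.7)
The ``only if'' direction is immediate: if $P$ is isotopic to the core $S^1 \times \{0\}$ of $V = S^1 \times D^2$, then $E_P$ is homeomorphic to $T^2 \times I$, so $\pi_1(E_P) \cong \Z \oplus \Z$ is abelian and every $SL_2(\C)$ representation has abelian image.

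For the converse, assume $P$ is not isotopic to the core. My strategy is to build a closed integer homology sphere $Y$ together with a surjection $\pi_1(E_P) \twoheadrightarrow \pi_1(Y)$, so that Zentner's theorem supplies an irreducible $SL_2(\C)$ representation of $\pi_1(Y)$ which pulls back to a nonabelian representation of $\pi_1(E_P)$. The manifold $Y$ arises from $E_P$ by Dehn filling its two torus boundary components. Standardly embedding $V \subset S^3$ identifies $E_P$ with the exterior of the two-component link $L = P \cup U \subset S^3$, where $U$ is the unknotted core of the complementary solid torus $V' = S^3 \ssm V^\circ$; then $\operatorname{lk}(P, U)$ equals the winding number $w$ of $P$ in $V$, and rational Dehn surgery on $L$ with slopes $p/q$ on $U$ and $r/s$ on $P$ produces a manifold whose first homology has order $|pr - qsw^2|$. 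This surgery yields an integer homology sphere whenever $pr - qsw^2 = \pm 1$, a Diophantine condition with infinitely many solutions.

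The main obstacle is producing such a filling $Y$ which is not $S^3$, and I would handle this by a case analysis. If $w = 0$, then $P$ is nullhomotopic in $V$ and, by Dehn's lemma, lies in a $3$-ball $B \subset V$; Van Kampen then gives $\pi_1(E_P) \cong \Z \ast \pi_1(S^3 \ssm N(P))$, so a nonabelian $SL_2(\C)$ representation comes either from Kronheimer--Mrowka applied to $P \subset S^3$ (when $P$ is knotted) or directly from $\pi_1(E_P) \cong F_2$ (when $P$ is unknotted), bypassing Zentner entirely. If $w \neq 0$, then $\partial V$ is incompressible in $E_P$ and $E_P$ is irreducible with incompressible boundary; when $E_P$ is hyperbolic, Thurston's hyperbolic Dehn surgery theorem guarantees that all but finitely many filling slopes produce hyperbolic, and hence non-$S^3$, manifolds, so infinitely many of the homology-sphere slopes work. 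The non-hyperbolic cases, corresponding to $P$ being a torus knot, cable, or iterated cable of the core of $V$, require explicit analysis using the Seifert fibered or graph-manifold structure of $E_P$. I expect this non-hyperbolic case analysis to be the hardest part of the argument, as one must verify that not every integer homology sphere filling collapses to $S^3$; once any single non-$S^3$ homology sphere filling is produced, Zentner's theorem and the naturality of $\pi_1$ complete the proof.
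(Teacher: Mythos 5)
Your overall skeleton—Dehn fill $E_P$ along both boundary tori to get integer homology spheres, then invoke Zentner's theorem on any filling that is not $S^3$—is exactly the strategy of the paper's proof. But there are two genuine gaps in your execution. First, the claim that $w=0$ forces $P$ to lie in a $3$-ball is false: winding number zero only says $P$ is nullhomologous in $V$, not geometrically inessential. The Whitehead pattern has winding number zero, yet its exterior in the solid torus is hyperbolic with incompressible boundary, so $P$ does not lie in any ball. The paper instead treats ``$P$ lies in a $3$-ball'' as its own (easy) case and then runs the Dehn filling argument for \emph{every} remaining $P$, including winding number zero patterns; nothing in the construction of the homology spheres actually requires $w\neq 0$, since the filling slopes $\mu+r\lambda$ on $\partial N(P)$ and $nM+(nrw^2+1)L$ on $S^1\times\partial D^2$ always span $H_1(E_P)$.

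Second, and more seriously, you leave unresolved precisely the step that carries the mathematical content: ruling out that every homology sphere filling is $S^3$. Your proposed route via geometrization splits into a hyperbolic case (where Thurston's Dehn surgery theorem does work) and a ``non-hyperbolic'' case whose taxonomy you misstate—besides torus knots and iterated cables of the core, $E_P$ can be a graph manifold or contain hyperbolic JSJ pieces (e.g.\ satellites of essential patterns), so the case analysis you defer is both incomplete as listed and genuinely hard. The paper sidesteps geometrization entirely: if every $Y_n\cong S^3$, then the core of the $L$-filling in $Y_0\cong S^3$ is a knot with a nontrivial $S^3$ surgery, hence unknotted by Gordon--Luecke, which exhibits $\mu+r\lambda$ as a nontrivial solid torus surgery slope for $P$. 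Since $r$ was arbitrary, $P$ would have infinitely many solid torus surgeries, and by Berge and Gabai only curves isotopic into $S^1\times\partial D^2$ have more than two; those remaining patterns are then handled directly, via Kronheimer--Mrowka for $1$-surgery on $T_{p,q}$ when $|p|,q\geq 2$ and by an explicit nonabelian representation when $p=\pm1$. Without some substitute for this Gordon--Luecke/Berge--Gabai step, your argument does not close.
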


\begin{proof}
If $P$ is a core of the solid torus then $\pi_1(E_P) = \Z^2$ is abelian, so the claim follows immediately.  If $P$ lies in a 3-ball then we can write $E_P = (S^1\times D^2) \# (S^3 \ssm N(P))$, whose fundamental group surjects onto $\Z * H_1(S^3 \ssm P) = \Z * \Z$.  This has a nonabelian representation for any pair of noncommuting elements of $SL_2(\C)$, so we can assume from now on that $P$ does not lie in a 3-ball.  We let $M = \{\pt\} \times \partial D^2$ and $L = S^1 \times \{\pt\}$ be generators of $H_1(S^1 \times \partial D^2)$, and we let $\mu, \lambda \in \partial N(P)$ denote a meridian and a longitude of $P$ respectively.  If $P$ has winding number $w$, then these satisfy $[M] = w[\mu]$ and $[\lambda] = w[L]$ in $H_1(E_P; \Z)$, which is generated by $[\mu]$ and $[L]$.

We now fix an integer $r \neq 0$ and construct 3-manifolds $Y_n$ for any $n\in\Z$ by Dehn filling the exterior of $P$ along a pair of curves: we fill $\partial N(P)$ along the slope $\mu+r\lambda$, and then we fill $S^1 \times \partial D^2$ along $nM + (nrw^2+1)L$.  The filling curves belong to the homology classes $[\mu]+rw[L]$ and $nw[\mu]+(nrw^2+1)[L]$, which span all of $H_1(E_p)$, so $Y_n$ is a homology sphere.  If $Y_n$ is not homeomorphic to $S^3$, then Zentner \cite{zentner} proved that $\pi_1(Y_n)$ admits an irreducible $SL_2(\C)$ representation, and since $\pi_1(Y_n)$ is a quotient of $\pi_1(E_P)$, the composition
\[ \pi_1(E_P) \to \pi_1(Y_n) \to SL_2(\C) \]
is the desired representation.

If we did not get a representation out of this construction, then we must have $Y_n \cong S^3$ for all $n$.  
Letting $K \subset Y_0 \cong S^3$ denote the core of the filling along $L$, we note that the nontrivial surgery on $K$ corresponding to filling along $M+(rw^2+1)L$ produces $Y_1 \cong S^3$.  Gordon and Luecke's solution to the knot complement problem \cite{gordon-luecke-complement} therefore says that $K$ is the unknot.  Its complement is a solid torus which we constructed by filling $E_P$ along the curve $\mu+r\lambda \subset \partial N(P)$, so $P$ has a nontrivial $S^1 \times D^2$ surgery of slope $\mu+r\lambda$.  In other words, $P$ must be a Berge-Gabai knot \cite{berge,gabai-tori}, meaning it is either a torus knot (i.e.,\ isotopic into $S^1 \times \partial D^2$) or a 1-bridge braid.

Since we chose $r$ arbitrarily at the beginning, the above argument shows that if there are no nonabelian representations $\pi_1(E_P) \to SL_2(\C)$ then any $\mu+r\lambda$ is an $S^1\times D^2$ surgery slope for $P$.  But Berge \cite{berge} and Gabai \cite{gabai-1bridge} showed that 1-bridge braids have at most two nontrivial $S^1 \times D^2$ surgeries, so $P$ must be isotopic into $S^1 \times \partial D^2$.  Suppose that $P$ represents the class $p[M]+q[L]$ for some coprime integers $p,q$ with $q \geq 2$, since if $q=0$ then $P$ lies in a 3-ball and if $q=1$ then $P$ is isotopic to a core.  If $|p|\geq 2$ as well then we can Dehn fill $E_P$ along the curves $L$ and $\mu+\lambda$ to get $1$-surgery on the $(p,q)$ torus knot in $S^3$.  Then $\pi_1(E_P)$ surjects onto $\pi_1(S^3_1(T_{p,q}))$, which admits a nonabelian $SU(2) \subset SL_2(\C)$ representation by \cite[Theorem~1]{km-su2}, hence $\pi_1(E_P)$ does as well.

The only remaining case is the torus knot with $(p,q) = (1, q)$, whose exterior has fundamental group
\[ \pi_1(E_P) = \Z^2 \ast_{(1,q) \sim q} \Z = \langle x,y,t \mid xy=yx, xy^q=t^q \rangle, \]
as can be seen by splitting $E_P$ along an essential annulus (whose core is parallel to $P$) into $T^2 \times I$ and $S^1 \times D^2$; and its mirror, which has $(p,q) = (-1,q)$ and the same fundamental group.  We use the second relation above to write $x=t^qy^{-q}$, and then substitute this into $xy=yx$ to get $t^qy^{1-q} = yt^qy^{-q}$, or equivalently
\[ \pi_1(E_P) = \langle y,t \mid t^q y = yt^q \rangle. \]
This group has an $SL_2(\C)$ representation defined by $t \mapsto \twosmallmatrix{e^{i\pi/q}}{0}{0}{e^{-i\pi/q}}$, $y \mapsto \twosmallmatrix{0}{1}{-1}{0}$, and this is nonabelian since $q \geq 2$, so this completes the proof.
\end{proof}

\begin{corollary} \label{cor:solid-torus-knot-mod-p}
Let $P \subset S^1 \times D^2$ be a knot whose exterior $E_P = (S^1 \times D^2) \ssm N(P)$ can be triangulated with $t$ tetrahedra.  Assuming the generalized Riemann hypothesis, there is a prime $p$ with $\poly(t)$ digits and a representation
\[ \pi_1(E_P) \to SL_2(\F_p) \]
with nonabelian image if and only if $P$ is not isotopic to the core $S^1 \times \{0\}$.
\end{corollary}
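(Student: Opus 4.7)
The plan is to follow the template of Theorem~\ref{thm:hyperbolic-certificates} almost verbatim: turn ``existence of a nonabelian $SL_2$ representation of $\pi_1(E_P)$'' into nonemptiness of an algebraic variety defined over $\Z$ with controlled complexity, appeal to Theorem~\ref{thm:solid-torus-knot-rep} to conclude that this variety has a $\C$-point when $P$ is not a core, and then invoke Theorem~\ref{thm:grh-solution-mod-p} under GRH to descend to an $\F_p$-point with $\log p = \poly(t)$.

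For the reverse direction, if $P$ is isotopic to the core $S^1 \times \{0\}$ then $E_P \cong T^2 \times I$ and $\pi_1(E_P) \cong \Z^2$ is abelian, so no nonabelian representation into any group exists.  For the forward direction, I would first extract a finite presentation of $\pi_1(E_P)$ from the triangulation: fixing a spanning tree in the $1$-skeleton yields one generator per non-tree edge, $O(t)$ in total, and one relation per $2$-simplex of length at most three, with the fundamental group unaffected by the attached $3$-cells.  Packaging each generator as a matrix $M_i = \twosmallmatrix{a_i}{b_i}{c_i}{d_i}$ in $4N = O(t)$ unknowns and imposing $\det(M_i)=1$ together with the four entrywise equations coming from each triangular relation produces an $SL_2(\C)$ representation variety cut out by $O(t)$ polynomials of degree at most $3$ with integer coefficients of bounded absolute value.

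To single out nonabelian representations I would apply the Rabinowitsch trick exactly as in Lemma~\ref{lem:noncommutative-variety}: introduce auxiliary unknowns $t_{i,j,k,l}$ for $1 \leq i < j \leq N$ and $1 \leq k,l \leq 2$ and add the single equation
\[ \sum_{i<j}\sum_{k,l} t_{i,j,k,l}\bigl(M_iM_j - M_jM_i\bigr)_{k,l} = 1, \]
which is solvable precisely when some pair of generator images fails to commute, equivalently when the representation has nonabelian image (since the $M_i$ generate the image).  The augmented system then has $\poly(t)$ variables and polynomials, each of degree $O(1)$ and with coefficients of absolute value $O(1)$, so it satisfies the hypotheses of Theorem~\ref{thm:grh-solution-mod-p}.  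If $P$ is not isotopic to a core, Theorem~\ref{thm:solid-torus-knot-rep} supplies a $\C$-solution, and then GRH yields an $\F_p$-solution with $\log p = \poly(t)$, providing the desired nonabelian representation.

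The only step requiring any real care is producing the presentation with the claimed complexity bounds: one must confirm that all relations can be taken of length $O(1)$ in the chosen generators so that the resulting polynomial system has the input size demanded by Theorem~\ref{thm:grh-solution-mod-p}.  Everything else is a direct transcription of the machinery already deployed for Theorem~\ref{thm:hyperbolic-certificates}.
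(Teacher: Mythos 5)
Your proposal is correct and follows essentially the same route as the paper's proof: both deduce abelianness of $\pi_1(E_P)$ when $P$ is a core, extract an $O(t)$-sized presentation from the triangulation, invoke Theorem~\ref{thm:solid-torus-knot-rep} for a nonabelian $SL_2(\C)$ representation, and use the Rabinowitsch trick together with Theorem~\ref{thm:grh-solution-mod-p} to descend to $SL_2(\F_p)$. The only cosmetic difference is that the paper outsources the Rabinowitsch-plus-GRH step to a citation of Kuperberg's Theorem~3.4, whereas you write out that construction explicitly.
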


\begin{proof}
If $P$ is isotopic to the core then again $E_P$ has abelian fundamental group, so no such representation exists.  Otherwise there is a presentation of $\pi_1(E_P)$ with $O(t)$ generators and relations, since these are determined by the 1-skeleton and 2-dimensional faces of the triangulation respectively, and moreover each relation can be taken to have polynomial length in $t$.  Then Theorem~\ref{thm:solid-torus-knot-rep} says that there is a representation $\pi_1(E_P) \to SL_2(\C)$ with nonabelian image, so \cite[Theorem~3.4]{kuperberg} guarantees the existence of such an $SL_2(\F_p)$ representation as well, by using the Rabinowitsch trick to discard the representations with abelian image and then applying Theorem~\ref{thm:grh-solution-mod-p}.
\end{proof}

We can now prove Theorem~\ref{thm:main-satellite}, which asserts that \satelliteknot{} is in \NP{}, assuming GRH.  Again we remark that GRH is only required for the existence of a certificate; any given certificate can be unconditionally verified in polynomial time.

\begin{theorem} \label{thm:satellite-np}
Assume the generalized Riemann hypothesis.  Given a diagram $D$ of a satellite knot $K$ with $n$ crossings, there is a certificate which can be used to verify in $\poly(n)$ time that $K$ is a satellite knot.
\end{theorem}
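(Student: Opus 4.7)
The plan is to build a certificate showing that $E_K$ contains an essential (i.e.\ incompressible and non-boundary-parallel) torus $T$, since this is equivalent to $K$ being a satellite knot. The certificate will consist of a triangulation of $E_K$ with polynomially many tetrahedra, normal coordinates $v(T)$ for such a torus, a Lackenby-style sutured-manifold hierarchy certifying that $T$ is incompressible, triangulations of the two components of $E_K \ssm N(T)$, and a nonabelian $SL_2(\F_p)$ representation of the fundamental group of one of those components that rules out the possibility that $T$ is boundary parallel.

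First I would apply Proposition~\ref{prop:triangulate-exterior} to triangulate $E_K$ with $t = O(n)$ tetrahedra. Since $K$ is a satellite, $E_K$ contains at least one essential torus, and Mijatovi\'c's bound \cite{mijatovic} guarantees that such a torus is isotopic to a normal torus $T$ whose weight $\|v(T)\|_{L^1}$ is bounded by $2^{\poly(t)}$, so its coordinates fit in $\poly(n)$ bits. I would include $v(T)$ in the certificate; the verifier uses the Agol--Hass--Thurston orbit-counting algorithm \cite{agol-hass-thurston}, as in the proof of Theorem~\ref{thm:torus-knot-np}, to confirm that $v(T)$ satisfies the matching equations and describes an embedded, connected, two-sided, closed surface of Euler characteristic zero---hence a torus.

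Next I would append Lackenby's sutured-manifold certificate of incompressibility for $T$ in $E_K$, i.e.\ a sutured manifold hierarchy decomposing the atoroidal components of $E_K \ssm N(T)$ along normal surfaces of polynomially bounded bit-size, together with the requisite handle structures and Schleimer--Ivanov $S^3$-recognition certificates for the terminal three-balls, exactly as outlined in the preamble to Lemma~\ref{lem:jsj-complement}. At the same time I would triangulate both components of $E_K \ssm N(T)$ using the torus analog of Proposition~\ref{prop:annulus-triangulation}---this is the original setting of \cite[Theorem~11.4]{lackenby-knottedness}---obtaining triangulations with $O(n)$ tetrahedra in $\poly(n)$ time. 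By Lemma~\ref{lem:jsj-complement}, one component is the exterior $E_{K'}$ of a knot $K' \subset S^3$ and has a single boundary torus, while the other is the exterior $E_P$ of a knot $P \subset S^1 \times D^2$ and has two boundary tori; the verifier distinguishes them by counting boundary tori in each triangulation.

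Finally, to certify that $T$ is not boundary parallel it suffices to show that $E_P \not\cong T^2 \times I$, equivalently that $P$ is not isotopic to the core of $S^1 \times D^2$. Assuming GRH, Corollary~\ref{cor:solid-torus-knot-mod-p} produces a prime $p$ with $\log p = \poly(n)$ and a nonabelian representation $\rho : \pi_1(E_P) \to SL_2(\F_p)$; I would include $\rho$ in the certificate by recording its values on the $O(n)$ generators coming from the triangulation of $E_P$. The verifier checks that each matrix has determinant $1$, that the polynomially many relations of polynomial length coming from the 2-faces are satisfied, and that some pair of generator images fails to commute, all in $\poly(n)$ time. Once every check passes, $T$ is simultaneously certified as incompressible and non-boundary-parallel, so $K$ is a satellite. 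The main obstacle will be importing Lackenby's incompressibility and efficient-triangulation machinery cleanly: his arguments in \cite{lackenby-knottedness} are written in the broader context of Thurston-norm certification for the full JSJ decomposition, and while the sutured hierarchy applies verbatim to any collection of incompressible normal tori of polynomially bounded bit-size, extracting a self-contained single-torus statement requires careful bookkeeping of the various complexity estimates from that paper.
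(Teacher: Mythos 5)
Your proposal follows essentially the same route as the paper: triangulate $E_K$ via Proposition~\ref{prop:triangulate-exterior}, present essential tori as normal surfaces with Mijatovi\'c's weight bound, certify incompressibility with Lackenby's sutured-manifold machinery, cut along the tori using \cite[Theorem~11.4]{lackenby-knottedness}, use Lemma~\ref{lem:jsj-complement} to identify a component that is the exterior of a knot in $S^1 \times D^2$, and certify that this component is not $T^2 \times I$ via the nonabelian $SL_2(\F_p)$ representation of Corollary~\ref{cor:solid-torus-knot-mod-p}. The verification steps (Agol--Hass--Thurston to confirm the normal coordinates give tori, counting boundary components to pick out the right piece, checking the relations and a noncommuting pair) also match.

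The one substantive deviation is that you cut along a \emph{single} essential torus rather than the full system of JSJ tori, and this is exactly where your argument rests on an unproved claim. Mijatovi\'c's bound \cite[Proposition~2.4]{mijatovic} and Lackenby's incompressibility certificate are both stated for the JSJ tori as a whole: Lackenby's certification splits $E_K \ssm N(T)$ into atoroidal pieces (handled by a sutured hierarchy) and Seifert fibered pieces (handled by \cite[Theorem~12.3]{lackenby-knottedness}). If you cut along only one JSJ torus, the complementary pieces are in general unions of several JSJ pieces and are neither atoroidal nor Seifert fibered, so your assertion that ``the sutured hierarchy applies verbatim to any collection of incompressible normal tori'' is precisely the bookkeeping you would have to supply, not a fact you can quote. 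The paper avoids this entirely by putting \emph{all} the JSJ tori into the certificate: the weight bound and the incompressibility certificate then apply exactly as stated in \cite{mijatovic} and \cite{lackenby-knottedness}, and Lemma~\ref{lem:jsj-complement} still guarantees that some component of the complement is a non-core knot exterior in $S^1 \times D^2$, which is all the representation step requires. I would recommend making that substitution rather than trying to extract a single-torus version of Lackenby's theorem.
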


\begin{proof}
We triangulate the exterior $E_K = S^3 \ssm N(K)$ with $t = O(n)$ tetrahedra using Proposition~\ref{prop:triangulate-exterior}.  Then the JSJ tori $T$ of $E_K$ can be realized as normal surfaces with total weight $||v(T)||_{L^1} \leq 2^{80t^2}$ by \cite[Proposition~2.4]{mijatovic}, so by \cite[Theorem~11.4]{lackenby-knottedness} we can triangulate $E_K \ssm N(T)$ with at most $200t = O(n)$ tetrahedra in $\poly(t \log ||v(T)||_{L^1}) = \poly(n)$ time.

If $K$ is a nontrivial satellite, then by Lemma~\ref{lem:jsj-complement} some component $C$ of $E_K \ssm N(T)$ will be the complement of a knot in $S^1 \times D^2$ which is not isotopic to a core, so by Corollary~\ref{cor:solid-torus-knot-mod-p} there will be a prime $p$ with $\log(p) = \poly(n)$ and a representation $\pi_1(C) \to SL_2(\F_p)$ with nonabelian image.  We can determine a presentation
\[ \pi_1(C) = \langle g_1, \dots, g_k \mid r_1, \dots, r_m \rangle \]
in $\poly(t)=\poly(n)$ time from the 2-skeleton of the given triangulation of $C$.  Our certificate therefore consists of:
\begin{enumerate}
\item The normal coordinates $v(T)$ for the JSJ tori of $E_K$;
\item A certificate that $T$ is incompressible, from \cite{lackenby-knottedness};
\item A choice of component $C$ of $E_K \ssm T$;
\item A prime number $p$ with $\poly(n)$ digits;
\item A representation $\rho: \pi_1(C) \to SL_2(\F_p)$ with nonabelian image, specified as a list of $k$ matrices $M_i = \rho(g_i) \in SL_2(\F_p)$.
\end{enumerate}
To verify the certificate, we do the following:
\begin{enumerate}
\item Verify that $T$ is a union of tori, by using the work of Agol-Hass-Thurston \cite[Corollary~17]{agol-hass-thurston}.
\item Verify the certificate that asserts the incompressibility of $T$, as in \cite{lackenby-knottedness}.
\item Verify that the boundary of $C$ has two connected components.
\item Verify that the matrices $M_i$ satisfy $\det(M_i)=1$ and each of the relations $r_j$.
\item Verify that $M_iM_j \neq M_jM_i$ for some $i \neq j$.
\end{enumerate}

This verification can clearly be done in polynomial time; we need only observe that for the first step, the algorithm of \cite[Corollary~17]{agol-hass-thurston} requires time $\poly(t \log ||v(T)||_{L^1}) = \poly(n)$.  If successful, it shows that the specified component $C$ of $E_K \ssm T$ has boundary a pair of incompressible tori but is not $T^2 \times I$, since the representation $\rho$ shows that $\pi_1(C)$ is nonabelian.  This implies that $E_K$ contains an incompressible torus which is not boundary-parallel, and so $K$ is indeed a satellite knot.
\end{proof}

Theorem~\ref{thm:satellite-np} finally allows us to complete the proof of Theorem~\ref{thm:main-torus}.

\begin{proof}[Proof of Theorem~\ref{thm:main-torus}]
We already showed in Theorem~\ref{thm:torus-knot-np} that $\torusknot \in \NP$, so we wish to show that it is in \coNP{}, assuming GRH.  Given a diagram of a knot $K$ which is not a torus knot, we know that $K$ is either the unknot, a satellite knot, or a hyperbolic knot.  If it is an unknot or a satellite knot then we can certify this using Hass-Lagarias-Pippenger's unknottedness certificates \cite{hass-lagarias-pippenger} or Theorem~\ref{thm:satellite-np} respectively.  Otherwise, $K$ is a hyperbolic knot, and in this case we can certify that it is not a torus knot by using an uncentered certificate, whose existence is guaranteed by Theorem~\ref{thm:hyperbolic-certificates}.

As for the $T_{r,s}$ recognition problem, given a knot diagram we can certify that it represents $T_{r,s}$ by first certifying that it is a torus knot, and then verifying in polynomial time (via Proposition~\ref{prop:which-torus-knot}) that it has the same Alexander polynomial and signature as $T_{r,s}$.  If instead it does not represent $T_{r,s}$, then we can certify this by either certifying that it is not a torus knot or by checking that it does not have the same Alexander polynomial and signature as $T_{r,s}$.  Assuming GRH, these certificates exist and can be verified in polynomial time since $\torusknot \in \NP \cap \coNP$, so this establishes that the $T_{r,s}$ recognition problem is in $\NP \cap \coNP$ as well.
\end{proof}

\section{Cable recognition and composite knot recognition are in \NP{}} \label{sec:cable-composite}

In this section we produce certificates, verifiable in polynomial time, which prove that a knot diagram represents a nontrivial cable or composite knot.  The arguments are very similar to our proof in Section~\ref{sec:torus-knot-np} that $\torusknot \in \NP$, in that we use Proposition~\ref{prop:small-normal-annuli} to provide an essential normal annulus and check the components of its complement.

More precisely, if $K$ is cabled or composite then an essential annulus separates the exterior $E_K = S^3 \ssm N(K)$ into two components. Then $K$ is cabled if and only if one component is a solid torus and the other is a nontrivial knot complement $E_{K'}$, where $K'$ is the companion knot; and $K$ is composite if and only if both components are nontrivial knot complements, say $E_{K_1}$ and $E_{K_2}$ where $K = K_1 \# K_2$.  In either case we can recognize solid tori using Ivanov's work \cite{ivanov}, and we can recognize nontrivial knot complements by certifying their Thurston norm as done by Lackenby \cite{lackenby-knottedness}.

In order to check that the Thurston norm of some triangulated knot complement is nonzero, we first need to find a simplicial 1-cocycle with bounded coefficients which represents a nonzero cohomology class.

\begin{proposition} \label{prop:build-cocycle}
Let $Y$ be a compact, connected 3-manifold with $H^1(Y;\Z) = \Z$ and possibly nonempty boundary, and let $\cT$ be a triangulation of $Y$ with $t$ tetrahedra.  Then there is a simplicial 1-cocycle $\phi$, with integer coefficients in the basis of $C^1(Y;\Z)$ dual to the edges of $\cT$, such that $||\phi||_{L^1} \leq \frac{1}{3}(18t)^{6t}$ and the class $[\phi]$ is a generator of $H^1(Y;\Z)$.
\end{proposition}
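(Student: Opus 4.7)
The plan is to reduce to a rank-one integer kernel computation via a spanning-tree collapse. Let $v, e, f$ denote the numbers of vertices, edges, and triangular faces of $\cT$; a crude count gives $v, e, f \leq 6t$. Since $Y$ is connected, choose a spanning tree $\Sigma$ of the $1$-skeleton of $\cT$ and collapse it to produce a homotopy-equivalent CW complex $Y' = Y/\Sigma$ with one vertex, $e' = e - v + 1 \leq 6t$ edges (now loops), and $f$ two-cells. The cellular cochain complex of $Y'$ has $C^0 = \Z$ with $\delta^0 = 0$, while the coboundary $\delta^1 \colon \Z^{e'} \to \Z^f$ is an integer matrix whose entries lie in $\{-1,0,1\}$, since each triangle of $\cT$ has boundary $\pm e_1 \pm e_2 \pm e_3$ with distinct $e_i$ and each surviving edge is a distinct loop in $Y'$. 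Hence
$$H^1(Y;\Z) \;\cong\; H^1(Y';\Z) \;=\; \ker(\delta^1),$$
and by hypothesis this kernel is isomorphic to $\Z$, so $\delta^1$ has rank $e'-1$.

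Next, I will exhibit the primitive generator of $\ker(\delta^1)$ via Cramer's rule and bound its size by Hadamard's inequality. Select $e'-1$ linearly independent rows of $\delta^1$ to form a submatrix $A \in \Z^{(e'-1)\times e'}$, and set $w_i = (-1)^i \det(A_{\hat \imath})$, where $A_{\hat \imath}$ is $A$ with the $i$th column deleted; then $w \in \ker(A) = \ker(\delta^1)$. Hadamard's bound applied to an $(e'-1) \times (e'-1)$ matrix with $\{-1,0,1\}$ entries gives $|w_i| \leq (e'-1)^{(e'-1)/2} \leq (6t)^{3t}$. Dividing by $\gcd(w_1, \ldots, w_{e'})$ produces a primitive generator $\bar v$ of $\ker(\delta^1)$ with $\|\bar v\|_{L^1} \leq e' \cdot (6t)^{3t} \leq (6t)^{3t+1}$.

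Finally, I pull $\bar v$ back along the collapse $p \colon Y \to Y'$, which is a homotopy equivalence and therefore induces an isomorphism on $H^1$. Concretely, the cochain $\phi = p^* \bar v \in C^1(Y;\Z)$ satisfies $\phi(e) = \bar v(e)$ on each non-tree edge and $\phi(e) = 0$ on each tree edge, so $\phi$ is a simplicial $1$-cocycle with $[\phi]$ generating $H^1(Y;\Z)$, and $\|\phi\|_{L^1} = \|\bar v\|_{L^1} \leq (6t)^{3t+1}$. A direct comparison shows $(6t)^{3t+1} \leq \tfrac{1}{3}(18t)^{6t}$ for all $t \geq 1$. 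The only substantive ingredient is the Hadamard bound on the Cramer's-rule vector; the rest is standard homotopy-theoretic and linear-algebraic bookkeeping, so I do not anticipate a serious obstacle.
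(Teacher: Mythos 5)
Your proof is correct, but it takes a genuinely different route from the paper's. The paper works directly with the simplicial cochain complex of $\cT$: it assembles an $(n-1)\times n$ integer matrix $B$ from $n-m$ independent rows of $\delta^1$ together with the coboundaries $\delta^0 v_i^*$ of all but one vertex, invokes Siegel's lemma to get a small nonzero kernel vector (hence a cocycle representing a \emph{nonzero} class), and then needs an extra correction step --- writing $[\phi]=d[\phi']$ and rounding the coefficients $c_i/d$ of the $\delta^0 v_i^*$ --- to convert this into a \emph{generator} without losing the norm bound. Your spanning-tree collapse sidesteps both complications at once: after collapsing, $\delta^0=0$, so $H^1$ is literally $\ker(\delta^1)$, a saturated rank-one subgroup of $\Z^{e'}$, and the primitive Cramer's-rule vector is automatically a generator; Hadamard's inequality replaces Siegel's lemma and gives a bound of the same quality. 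This is arguably cleaner, at the cost of having to pass between the simplicial and collapsed cellular complexes (which is routine). One small caveat: the triangulations used in this paper (e.g.\ those produced by Proposition~\ref{prop:triangulate-exterior}) are not simplicial complexes, so a single triangle may traverse the same edge more than once; the entries of $\delta^1$ therefore lie in $\{-3,\dots,3\}$ rather than $\{-1,0,1\}$, exactly as the paper notes for its matrix $A$. This inflates your Hadamard bound by a factor of at most $3^{e'-1}\leq 3^{6t}$, giving $\|\bar v\|_{L^1}\leq 6t\cdot(54t)^{3t}$, which is still comfortably below $\tfrac{1}{3}(18t)^{6t}=\tfrac{1}{3}(324t^2)^{3t}$, so the stated conclusion survives unchanged.
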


\begin{proof}
We write the simplicial cochain complex corresponding to $\cT$ as 
\[ C^0(Y;\Z) \xrightarrow{\delta^0} C^1(Y;\Z) \xrightarrow{\delta^1} C^2(Y;\Z) \xrightarrow{\delta^2} C^3(Y;\Z). \]
Labeling the edges of $\cT$ as $e_1, \dots, e_n$, where $n \leq 6t$, the dual elements $e_1^*,\dots,e_n^*$ defined by $e_i^*(e_j) = \delta_{ij}$ generate $C^1(Y;\Z) \cong \Z^n$.  Likewise we let $v_1,\dots,v_m$ and $f_1,\dots,f_k$ be the vertices and faces of $\cT$, with $k,m \leq 4t$, and then the dual elements $v_i^*$ and $f_j^*$ generate $C^0(Y;\Z)$ and $C^2(Y;\Z)$.

We now attempt to find an integral cocycle with bounded coefficients representing a nonzero class in $H^1(Y;\Z)$.  We observe that $\ker(\delta^0)$ has rank $b_0(Y) = 1$, so $\img(\delta^0)$ has rank $m-1$ and then 
\[\rank(\ker(\delta^1)) = b_1(Y) + \rank(\img(\delta^0)) = 1+(m-1) = m. \]
The operator $\delta^1$ thus has rank $n-m$.  Representing it as a $k\times n$ matrix $A$ in the standard bases $\big(e_i^*\big)$ and $\big(f_j^*\big)$ of $C^1(Y;\Z)$ and $C^2(Y;\Z)$, we can renumber the $f_j$ so that the first $n-m$ rows of $A$ are linearly independent.  Similarly, we can renumber the vertices $v_i$ so that $\delta^0v_1^*, \dots, \delta^0v_{m-1}^*$ are linearly independent in $C^1(Y;\Z)$.  We now form an $(n-1)\times n$ matrix $B$ whose first $n-m$ rows are the first $n-m$ rows of $A$, and whose remaining $m-1$ rows are the coordinates of the vectors $\delta^0v_1^*,\dots,\delta^0v_{m-1}^*$.

By construction, a nonzero element $x \in \Z^n$ belongs to $\ker(B)$ if and only if it is in $\ker(A)$ and orthogonal to each of the $\delta^0v_i^*$; the corresponding cocycle $\phi$ is not in $\img(\delta^0)$, so it represents a nontrivial cohomology class.  The entries of $B$ are bounded as follows: every entry in the first $n-m$ rows is an entry of $A$, so it has the form $a_{ij} = (\delta^1e_j^*)(f_i) = e_j^*(\partial f_i)$, which has absolute value at most 3.  Each entry in the last $m-1$ rows has the form $(\delta^0v_i^*)(e_j) = v_i^*(\partial e_j)$, which is $\pm 1$ if exactly one endpoint of $e_j$ is $v_i$ and $0$ otherwise.  Thus $B$ is an $(n-1)\times n$ matrix whose entries are all integers between $-3$ and $3$, so Siegel's lemma \cite[p.\ 213]{siegel} says that $Bx=0$ has a nonzero integer solution $x = \langle x_1,\dots,x_n\rangle$ satisfying
\[ |x_i| \leq (3n)^{(n-1)/(n-(n-1))} = (3n)^{n-1} \]
for all $i$.  In particular, this gives a cocycle $\phi \in C^1(Y;\Z)$ with $[\phi] \neq 0$ in $H^1(Y;\Z)$ and $||\phi||_{L^1} \leq (3n)^{n-1}\cdot n = \frac{1}{3}(3n)^{n} \leq \frac{1}{3}(18t)^{6t}$.

Now suppose that $[\phi] = d[\phi']$, where $[\phi']$ generates $H^1(Y;\Z)$; if $d=1$ then we are done, so we may assume that $d \geq 2$.  Then we have integers $c_1,\dots,c_m$ such that
\[ \phi = d\phi' + \sum_{i=1}^m c_i(\delta_0v_i^*). \]
Letting $n_i$ be the closest integer to $\frac{c_i}{d}$ for each $i=1,\dots,m$, we define a cocycle
\[ \phi_0 = \phi' + \sum_{i=1}^m n_i(\delta_0v_i^*) \in C^1(Y;\Z) \]
and observe that $[\phi_0] = [\phi']$ generates $H^1(Y;\Z)$, and also that
\[ \left|\left|\phi_0 - \frac{1}{d}\phi\right|\right|_{L^1} = \left|\left|\sum_{i=1}^m \left(n_i-\frac{c_i}{d}\right)(\delta_0v_i^*)\right|\right|_{L^1} \leq \frac{1}{2} \sum_{i=1}^m \left|\left|\delta_0v_i^*\right|\right|_{L^1}
\]
by the triangle inequality.  Now $\left|\big(\delta_0v_i^*\big)(e_j)\right| \leq 1$ as explained above, so $||\delta_0v_i^*||_{L^1} \leq n$ and hence the right hand side above is at most $\frac{1}{2}mn \leq 12t^2$.  We conclude by the triangle inequality that
\[ ||\phi_0||_{L^1} \leq \left|\left|\phi_0 - \frac{1}{d}\phi\right|\right|_{L^1} + \frac{1}{d}||\phi||_{L^1} \leq 12t^2 + \frac{\frac{1}{3}(18t)^{6t}}{2}, \]
which is clearly at most $\frac{1}{3}(18t)^{6t}$ for all $t \geq 1$, establishing the claim.
\end{proof}

With the cocycles provided by Proposition~\ref{prop:build-cocycle} in hand, we can now certify the cabledness or compositeness of a given knot.

\begin{theorem} \label{thm:cabled-np}
Given a diagram of a cabled knot $K$ with $n$ crossings, there is a certificate which can be used to verify in $\poly(n)$ time that $K$ is a nontrivial cable.
\end{theorem}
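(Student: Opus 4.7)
My plan is to follow the template of Theorem~\ref{thm:torus-knot-np}, replacing one of the two Ivanov solid torus certificates with a Lackenby Thurston norm certificate on the other side. First I would apply Proposition~\ref{prop:triangulate-exterior} to produce a triangulation of $E_K$ with $t = O(n)$ tetrahedra. Since $K$ is cabled, Proposition~\ref{prop:small-normal-annuli} supplies a normal essential annulus $A \subset E_K$ with $||v(A)||_{L^1} \leq 7t \cdot 2^{7t}$ whose complement $E_K \ssm N(A)$ is a disjoint union of a solid torus and a nontrivial knot complement.

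The certificate I would provide consists of: the normal coordinates $v(A)$; a designation of which component $M_1$ of $E_K \ssm N(A)$ is the solid torus, with an Ivanov solid torus certificate \cite{ivanov} for $M_1$; and, for the other component $M_2$, a simplicial 1-cocycle $\phi$ representing a generator of $H^1(M_2;\Z)$ supplied by Proposition~\ref{prop:build-cocycle}, together with Lackenby's Thurston norm certificate from \cite[Theorem~1.5]{lackenby-knottedness} asserting that the Thurston norm of $[\phi]$ is at least $1$.

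To verify the certificate, I would first use Proposition~\ref{prop:compute-signature} to confirm that $\Delta_K(t) \neq 1$, so that $K$ is knotted. Next I would verify, using the normal-surface equations, the Agol-Hass-Thurston orbit-counting algorithm, and Lemma~\ref{lem:annulus-pi1}, that $v(A)$ describes a properly embedded, two-sided annulus with homologically nontrivial boundary on $\partial E_K$. Then I would apply Proposition~\ref{prop:annulus-triangulation} to triangulate $E_K \ssm N(A)$ into two components $M_1, M_2$ with $O(n)$ tetrahedra in total, and verify the Ivanov and Lackenby certificates for $M_1$ and $M_2$ respectively. The total verification time is $\poly(t \log ||v(A)||_{L^1}) = \poly(n)$.

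Once verified, $A$ is incompressible and $M_2$ is not a solid torus, since solid tori have zero Thurston norm. The remaining issue, exactly as in Theorem~\ref{thm:torus-knot-np}, is to exclude boundary parallelism of $A$; I expect this to be the main subtlety, since (unlike in the torus knot case) it is not immediately ruled out by the complement structure, because a boundary-parallel $A$ in the knotted $E_K$ would yield a product solid torus and a piece homeomorphic to $E_K$ whose Thurston norm is also positive. I would handle it by checking that $\partial A$ is not the preferred longitude of $M_1$, for instance by computing from the triangulations of $M_1$ and $M_2$ that $\partial A$ has nonzero image under the inclusion $H_1(\partial M_1) \to H_1(M_2)$; once this holds, $A$ is essential, and the decomposition of $E_K \ssm N(A)$ as a solid torus together with a nontrivial knot complement matches the cabling criterion \cite[Lemma~15.26]{burde-zieschang-book} recalled in Section~\ref{sec:normal-surfaces}, certifying that $K$ is a nontrivial cable.
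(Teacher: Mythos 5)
Your certificate has the same shape as the paper's (normal annulus $A$ with the Hass--Lagarias--Pippenger weight bound, an Ivanov solid-torus certificate for one side, a Lackenby Thurston-norm certificate built from the cocycle of Proposition~\ref{prop:build-cocycle} for the other), and you correctly isolate the one genuinely new difficulty relative to Theorem~\ref{thm:torus-knot-np}: ruling out that $A$ is boundary-parallel. But the check you propose for this step is not sound. Take any nontrivial knot $K$ whatsoever and let $A$ be a boundary-parallel annulus whose core is a \emph{meridian} of $K$. Then $E_K \ssm N(A) \cong (A \times I) \sqcup E_K$: the first piece is a solid torus, so Ivanov's certificate exists; the second piece has Thurston norm $2g(K)-1 \geq 1$, so Lackenby's certificate exists; the components of $\partial A$ are homologically nontrivial in $\partial E_K$; and $[\partial A] = [\mu]$ \emph{generates} $H_1(M_2) \cong H_1(E_K) \cong \Z$, so it certainly has nonzero image there. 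Every test you list passes, yet $K$ need not be cabled. (The phrase ``not the preferred longitude of $M_1$'' hints at a check that could work --- for a boundary-parallel $A$ the core of $A$ is primitive in $H_1$ of the product solid torus, whereas the core of a cabling annulus is $q$ times a generator of $H_1(N(K'))$ with $q \geq 2$ --- but the ``for instance'' you substitute, nonvanishing in $H_1(M_2)$, is a different and weaker condition, and you would still need to justify that the correct condition holds for every genuine cable and is computable in $\poly(n)$ time from the data at hand.)

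The paper closes this gap differently: it adds to the verification a computation of the Alexander polynomial of the non-solid-torus component $E$ via Fox calculus from a presentation of $\pi_1(E)$ read off the triangulation, and checks that $\deg \Delta_E < \deg \Delta_K$. If $A$ were boundary-parallel then $E \cong E_K$ and the degrees would agree, so the check rejects; if $K = C_{p,q}(K')$ is a genuine cable then $\Delta_K(t) = \Delta_{T_{p,q}}(t)\,\Delta_{K'}(t^q)$ forces $\deg \Delta_{K'} < \deg \Delta_K$, so the check accepts. A secondary omission: Lackenby's Theorem~1.5 certifies the exact Thurston norm $\theta$, and to know that this certificate has polynomial size and is verifiable in $\poly(n)$ time one needs the a priori bound $1 \leq \theta \leq n$, which the paper extracts from $2g(K')-1 \leq 2g(K)-1 \leq n$ using Shibuya's inequality $g(K') \leq g(K)$; asserting only ``norm at least $1$'' skips this. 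The rest of your argument (incompressibility via Lemma~\ref{lem:annulus-pi1}, exclusion of the torus-knot and composite cases once one side is a solid torus and the other has positive norm, and the appeal to \cite[Lemma~15.26]{burde-zieschang-book}) matches the paper and is fine.
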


\begin{proof}
We construct a triangulation $\cT$ of the exterior $E_K$ with $t=O(n)$ tetrahedra, whose restriction to $\partial E_K$ consists of two triangles and a single vertex, using Proposition~\ref{prop:triangulate-exterior}.  We can realize a cabling annulus $A$ as a normal surface with $||v(A)||_{L^1} \leq 7t \cdot 2^{7t}$ by Proposition~\ref{prop:small-normal-annuli}.  We can use Proposition~\ref{prop:annulus-triangulation} to produce a triangulation of $E_K \ssm N(A)$ with at most $200t$ tetrahedra in $\poly(t)$ time.

The complement $E_K \ssm N(A)$ consists of two components, one of which is a solid torus and the other of which is the complement of the companion $K'$; we will call these $C$ and $E$ respectively.  We can use the triangulation of $E$ to find a presentation of $\pi_1(E)$ in $\poly(t) = \poly(n)$ time.  Our certificate that $K$ is cabled thus contains the following:
\begin{enumerate}
\item Normal coordinates $v(A)$ of a properly embedded, essential annulus $A \subset E_K$ with weight $||v(A)||_{L^1} \leq 7t \cdot 2^{7t}$;
\item A certificate that one component $C$ of $E_K \ssm N(A)$ is a solid torus, as provided by Ivanov \cite{ivanov};
\item For the other component $E$ of $E_K \ssm N(A)$, a simplicial 1-cocycle $\phi$ with integer coefficients satisfying $||\phi||_{L^1} < \frac{1}{3}(18\cdot 200t)^{6\cdot 200t}$;
\item A positive integer $\theta$, with $1 \leq \theta \leq n$;
\item A certificate that the Poincar\'e dual of $[\phi]$ has Thurston norm $\theta$, as provided by Lackenby \cite{lackenby-knottedness}.
\end{enumerate}
We verify the certificate as follows:
\begin{enumerate}
\item Verify that $\Delta_K(t) \neq 1$. \label{i:cable-alex-poly}
\item Use the presentation of $\pi_1(E)$ to compute the Alexander polynomial of $E$ via Fox calculus, and verify that it has strictly smaller degree than $\Delta_K(t)$. \label{i:cable-alex-2}
\item Verify that $A$ is an annulus and that the components of $\partial A$ are homologically nontrivial in $\partial E_K$. \label{i:cable-annulus-pi1}
\item Verify as in \cite{ivanov} the certificate that $C \cong S^1 \times D^2$.
\item Verify the certification of the Thurston norm of $[\phi]$, as in \cite{lackenby-knottedness}. \label{i:cable-companion-norm}
\end{enumerate}

To see that this correctly proves that $K$ is cabled, we note that if $K$ is a $(p,q)$-cable of $K'$, then we have the relation
\[ \Delta_K(t) = \Delta_{T_{p,q}}(t) \Delta_{K'}(t^q), \]
and so $\Delta_K(t) \neq 1$ and $\deg(\Delta_K(t)) > \deg(\Delta_{K'}(t))$.  Thus step~\eqref{i:cable-alex-poly} certifies that $K$ is knotted without incorrectly eliminating any cables.  Since $K$ is knotted, step~\eqref{i:cable-annulus-pi1} proves that $A$ is either essential or boundary-parallel.  In the latter case, we have $E \cong E_K$, so the Alexander polynomial computed in step~\eqref{i:cable-alex-2} will have the same degree as $\Delta_K(t)$.  In particular, the certificate verifies that $A$ is essential.

Since $A$ is an essential annulus, $K$ must be a torus knot, a cable, or composite.  If it is composite then both components of $E_K \ssm N(A)$ are nontrivial knot exteriors, so $C$ cannot be a solid torus.  If instead $K$ is a torus knot, then both components are solid tori, so the Thurston norm of $E$ vanishes.  Thus if the certificate exists, it proves that $K$ is a cable.

Now we need to see that a certificate exists whenever $K = C_{p,q}(K')$ is a nontrivial cable of a nontrivial knot $K'$.  The normal annulus $A$ exists by Proposition~\ref{prop:small-normal-annuli}, as does the certificate that $C \cong S^1 \times D^2$.  Using Proposition~\ref{prop:build-cocycle}, we know that there is a cocycle $\phi$ representing a generator of $H^1(E) \cong \Z$, where $||\phi||_{L^1}$ satisfies the given bounds since $E$ has at most $200t$ tetrahedra.  The Poincar\'e dual of $\phi$ then has Thurston norm $\theta = 2g(K')-1 > 0$, where $g(K')$ is the Seifert genus of the companion $K'$.  But we have
\[ 2g(K')-1 \leq 2g(K)-1 \leq n, \]
where $g(K') \leq g(K)$ by \cite{shibuya} and $2g(K)-1 \leq n$ by an easy application of Seifert's algorithm, and so $1 \leq \theta \leq n$.  Thus $\phi$ and $\theta$ exist as well.

Finally, it is mostly clear that each step of the verification requires only polynomial time in $n$, since $t = O(n)$.  We recall that in step~\eqref{i:cable-annulus-pi1}, we verify that $A$ is an annulus using \cite{agol-hass-thurston}, and that we determine that $\partial A$ consists of essential curves in $\partial E_K$ using their characterization as normal curves as in Section~\ref{sec:normal-surfaces}.  Moreover, step~\eqref{i:cable-companion-norm} takes $\poly(200t, \log(\theta), \log(||\phi||_{L^1}))$ time by \cite[Theorem~1.5]{lackenby-knottedness}; but $\log(\theta) = O(\log n)$ and $\log(||\phi||_{L^1}) = O(t\log t)$, and $t = O(n)$, so this is again $\poly(n)$ as desired.
\end{proof}

The construction of certificates proving that a knot $K$ is composite, and the algorithm which verifies them in polynomial time, is nearly identical.  The only complication is that composite knots can have Alexander polynomial 1, so we check that they are knotted by using Lackenby's knottedness certificates \cite{lackenby-knottedness} instead of their Alexander polynomials.

\begin{theorem} \label{thm:composite-np}
Given a diagram of a composite knot $K$ with $n$ crossings, there is a certificate which can be used to verify in $\poly(n)$ time that $K$ is a nontrivial connected sum.
\end{theorem}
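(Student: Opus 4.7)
The plan is to follow the same template as the proof of Theorem~\ref{thm:cabled-np} very closely, since the structural picture is nearly identical: after triangulating $E_K$ with $t = O(n)$ tetrahedra via Proposition~\ref{prop:triangulate-exterior}, Proposition~\ref{prop:small-normal-annuli} provides an essential normal annulus $A \subset E_K$ with $||v(A)||_{L^1} \leq 7t \cdot 2^{7t}$, and then Proposition~\ref{prop:annulus-triangulation} triangulates $E_K \ssm N(A)$ with $O(n)$ tetrahedra in $\poly(n)$ time. In the composite case the two resulting components $E_1$ and $E_2$ are both nontrivial knot exteriors $E_{K_1}$ and $E_{K_2}$, so instead of asking Ivanov's algorithm to certify that one of them is a solid torus we will ask Lackenby's Thurston norm machinery to certify that \emph{each} component has positive Thurston norm.

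The certificate will thus consist of: (i) a Lackenby knottedness certificate for $K$; (ii) the normal coordinates $v(A)$; (iii) for each of the two components $E_i$ of $E_K \ssm N(A)$, a simplicial $1$-cocycle $\phi_i$ with $||\phi_i||_{L^1} < \tfrac{1}{3}(18\cdot 200t)^{6\cdot 200t}$ produced by Proposition~\ref{prop:build-cocycle}, a positive integer $\theta_i$ with $1 \le \theta_i \le n$, and the Thurston norm certificate from \cite[Theorem~1.5]{lackenby-knottedness} proving $||[\phi_i]||_T = \theta_i$. The verifier will check that $K$ is knotted, that $A$ is an annulus with homologically nontrivial boundary (using the $\chi$, connectedness, orientability and boundary-slope checks exactly as in the proof of Theorem~\ref{thm:torus-knot-np}), that $E_K \ssm N(A)$ has exactly two components, and finally that each Thurston norm certificate is valid.

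For correctness: knottedness of $K$ together with nontrivial $\partial A$ implies via Lemma~\ref{lem:annulus-pi1} that $A$ is either essential or boundary-parallel, but boundary-parallel would force one component of $E_K \ssm N(A)$ to be $T^2 \times I$ with vanishing Thurston norm, contradicting $\theta_i \ge 1$. So $A$ is essential, hence by the Simon/Burde--Zieschang trichotomy recalled in Section~\ref{sec:normal-surfaces} the knot $K$ is a torus knot, cable, or composite knot; the first two cases are excluded because they require at least one component of the complement to be a solid torus (Thurston norm zero), leaving only the composite case. Conversely, if $K = K_1 \# K_2$ is composite then such an $A$ exists by Proposition~\ref{prop:small-normal-annuli}, the cocycles $\phi_i$ exist by Proposition~\ref{prop:build-cocycle}, and the bound $1 \le \theta_i = 2g(K_i) - 1 \le 2g(K) - 1 \le n$ holds because genus is additive under connected sum and $2g(K)-1$ is bounded by the crossing number via Seifert's algorithm.

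The only genuinely new point, and the reason this cannot be copied verbatim from Theorem~\ref{thm:cabled-np}, is the knottedness check: composite knots can have Alexander polynomial equal to $1$ (take any connected sum of two Alexander-polynomial-one knots), so the step $\Delta_K(t) \ne 1$ from the cable proof fails, and we must replace it by Lackenby's unconditional knottedness certificate in \NP{} from \cite{lackenby-knottedness}. Everything else — bounds on $v(A)$, the polynomial-time triangulation of $E_K \ssm N(A)$, the $\log$-size of $\theta_i$ and $||\phi_i||_{L^1}$ feeding into the $\poly(200t,\log\theta_i,\log||\phi_i||_{L^1})$ running time of the Thurston norm verifier — carries over unchanged, and I expect no other substantive obstacle.
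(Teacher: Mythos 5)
Your proposal matches the paper's proof essentially verbatim: the same certificate (a Lackenby knottedness certificate for $K$, normal coordinates for the essential annulus, and for each of the two components a bounded simplicial $1$-cocycle together with a Thurston norm certificate), the same verification steps, the same correctness argument ruling out the torus-knot and cable cases because those force a solid-torus component of zero Thurston norm, and the same key observation that the Alexander polynomial test from the cable case must be replaced by Lackenby's knottedness certificate since composite knots can have $\Delta_K(t)=1$. One small slip: a boundary-parallel annulus cuts off a piece homeomorphic to $A \times I$, which is a solid torus rather than $T^2 \times I$ --- but since a solid torus also has vanishing Thurston norm, your contradiction with $\theta_i \geq 1$ goes through unchanged.
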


\begin{proof}
We produce a triangulation $\cT$ of the exterior $E_K$ which restricts to a one-vertex triangulation on $\partial E_K$ by Proposition~\ref{prop:triangulate-exterior}, and given a normal essential annulus $A$ which splits $E_K$ into a pair of nontrivial knot complements $E_1$ and $E_2$ we can triangulate $E_K \ssm N(A)$ with $O(n)$ tetrahedra in $\poly(n)$ time by Proposition~\ref{prop:annulus-triangulation}.  We can arrange that $||v(A)||_{L^1} \leq 7t \cdot 2^{7t}$, where $\cT$ has $t = O(n)$ tetrahedra, by Proposition~\ref{prop:small-normal-annuli}.  Using the triangulations of $E_1$ and $E_2$, we can similarly produce presentations of $\pi_1(E_1)$ and $\pi_1(E_2)$ in $\poly(n)$ time.  All of this is accomplished exactly as in the proof of Theorem~\ref{thm:cabled-np}.

The compositeness certificate for the given diagram consists of:
\begin{enumerate}
\item A certificate that it represents a nontrivial knot, as specifed by Lackenby \cite{lackenby-knottedness};
\item Normal coordinates for an essential annulus $A$, with $||v(A)||_{L^1} \leq 7t \cdot 2^{7t}$;
\item For both $i=1$ and $i=2$:
\begin{enumerate}
\item A simplicial 1-cocycle $\phi_i$ on $E_i$ with integral coefficients, satisfying $||\phi_i||_{L^1} < \frac{1}{3}(3600t)^{1200t}$;
\item A positive integer $\theta_i < n$;
\item A certificate that the Poincar\'e dual of $[\phi_i]$ has Thurston norm $[\theta_i]$, as provided by \cite{lackenby-knottedness}.
\end{enumerate}
\end{enumerate}
We use \cite{lackenby-knottedness} to verify that $K$ is knotted, and \cite{agol-hass-thurston} to verify that $v(A)$ indeed represents an annulus.  We check that the components of $\partial A$ are homologically essential in $\partial E_K$, which then proves that $A$ is either essential or boundary-parallel.  We then verify that the classes dual to $[\phi_i]$ on each $E_i$ have the given Thurston norms.  All of this can be done in $\poly(n)$ time, as in Theorems~\ref{thm:torus-knot-np} and \ref{thm:cabled-np}.

To see that this correctly verifies that $K$ is composite, we note that neither component of $E_K \ssm N(A)$ can be a solid torus, since then its Thurston norm would vanish.  This implies that $A$ cannot be boundary-parallel, so it must be essential, and then since neither component is a solid torus we conclude that $K$ is composite.  Moreover, if $K$ is composite then a certificate exists: the normal annulus exists by Proposition~\ref{prop:small-normal-annuli}, and if $E_i$ is the exterior of a knot $K_i$ then we have $g(K_1) + g(K_2) = g(K)$, so that 
\[ 1 \leq 2g(K_i)-1 \leq 2g(K)-1 < n \]
and hence the desired bounds on $||\phi_i||_{L^1}$ and $\theta_i$ follow as in Theorem~\ref{thm:cabled-np}.
\end{proof}

\bibliographystyle{halpha}
\bibliography{References}

\end{document}